\documentclass[11pt]{article}

\usepackage{dsfont}
\usepackage{color}

\usepackage[top=2 cm,bottom=2 cm,left=2. cm, right=2. cm]{geometry}

\usepackage{amssymb}
\usepackage{graphicx}
\usepackage{amsmath}
\usepackage{amsthm}
\usepackage{mathtools}
\usepackage{amsfonts}

\let\pa\partial  
\let\na\nabla  
\let\eps\varepsilon   
  
\newcommand{\R}{{\mathbb R}} 
\newcommand{\diver}{\operatorname{div}}

\newcommand{\FL}{(- \Delta)^{\alpha}}
\newcommand{\al}{\alpha}

\newcommand{\uh}{{\hat u}}
\newcommand{\zero}{\left\{ 0 \right\}}
\newcommand{\Nn}{\tilde{\mathcal{N}}}

\newcommand{\dd}{\textrm{d}}

\allowdisplaybreaks

\newcommand{\RomanNumeralCaps}[1]
     {\MakeUppercase{\romannumeral #1}}

\newtheorem{theorem}{Theorem}   
\newtheorem{lemma}[theorem]{Lemma}   
   
\newtheorem{remark}{Remark} 
  
\newtheorem{definition}{Definition}



\title{Derivation of a Fractional Cross-Diffusion System as the Limit of a Stochastic Many-Particle System Driven by L\'{e}vy Noise}

\author{Esther S. Daus \thanks{Institute for Analysis and Scientific Computing, Vienna University of Technology, Wiedner Hauptstra\ss e 8--10, 1040 Wien, Austria (esther.daus@tuwien.ac.at)}, \; 
Mariya Ptashnyk \thanks{Department of Mathematics, School of Mathematical and Computer Sciences, Heriot-Watt University, EH14 4AS Edinburgh, Scotland, United Kingdom (m.ptashnyk@hw.ac.uk)}, \; 
Claudia Raithel \thanks{Institute for Analysis and Scientific Computing, Vienna University of Technology, Wiedner Hauptstra\ss e 8--10, 1040 Wien, Austria (claudia.raithel@tuwien.ac.at)}
}


\date{}

\begin{document}

\maketitle

\begin{abstract}
  In this article a fractional cross-diffusion system is derived as the rigorous many-particle limit of a multi-species system of moderately interacting particles that is driven by L\'{e}vy noise. The form of the mutual interaction is motivated by the porous medium equation with fractional potential pressure. Our approach is based on the techniques developed by Oelschl\"ager (1989) and Stevens (2000), in  the latter of  which the convergence of a regularization of the empirical measure to the solution of a correspondingly regularized macroscopic system is shown.  A well-posedness result and the non-negativity of solutions are proved for the regularized macroscopic system, which then yields the same results for the non-regularized fractional cross-diffusion system in the limit. 
\end{abstract}


\noindent\emph{Keyword}:
Stochastic many-particle systems, fractional diffusion, cross-diffusion systems, L\'{e}vy processes.



\section{Introduction} 

Cross-diffusion systems arise in modelling many different biological and physical processes, e.g.~the movement of cells, bacteria or animals; transport through ion-channels in cells; tumour growth; gas dynamics; carrier transport in semiconductors~\cite{BWS12, GP_1984, JB_2002, KS_1971_2, Painter_2009,  R_95, SKT_79, WK_2000}, with the chemotaxis system~\cite{KS_1971} being one of the most important examples of a cross-diffusion system (with a triangular cross-diffusion matrix). Different approaches, ranging from semigroup theory to energy or entropy methods and  applications of the
Jordan-Kinderlehrer-Otto scheme, have been used to analyze cross-diffusion systems~\cite{AL83, A_1989, A_1990,   BDiFFS_2018,  CFSS_2018,  DLMT15, DiFF_2013, Jue16, Lady68, Le_2002, SW_2021, Tuoc_2007}, with many results dedicated to the chemotaxis model in particular, see the review papers \cite{BBTW_2015, Horstman_2003, Horstman_2004} and the references therein. Cross-diffusion equations with nonlocal interaction terms have also attracted interest in previous years~\cite{BBP17, DiFF_2013,   FEF18}.  

The derivation of cross-diffusion systems from stochastic $N$-particle systems has been studied in \cite{O_89}, assuming some ellipticity of the cross-diffusion matrix. A new approach, using a regularized system and an intermediate ``frozen'' system, was necessary for the rigorous derivation of a chemotaxis system from a microscopic description of stochastic particle interactions \cite{S_2000}. Some models of cross-diffusion type used in population dynamics were derived in \cite{CDJ_2018,MF_2015}, whereas in \cite{Seo_2018} the Maxwell-Stefan equations were obtained as the hydrodynamic limit of the empirical densities. 
In \cite{FES20} a cross-diffusion model with nonlocal interactions was derived from a many-particle system with a Newtonian potential. 
%

Although there are several contributions concerned with nonlocal cross-diffusion systems available, the derivation of cross-diffusion systems with fractional cross-diffusion terms from the stochastic particle systems, as well as well-posedness results for such systems have not been considered.  Correspondingly, the aim of this article is to rigorously derive such a system starting from the microscopic model, the movement of the particles being determined by a L\'{e}vy walk and non-local mutual interaction potentials. In particular, we first derive a fractional cross-diffusion system as the many-particle limit of a moderately interacting particle system and then we prove a well-posedness result for the limiting system.

The motivation for considering processes driven by L\'evy walks is derived from the experimental observation that both in the context of cell motility \cite{EGP18, EGP19, HBC12, KEV04, MJD09, TG05, VBH99} and population dynamics \cite{BPP03, VAB96, RMM04,  RSH11} in certain situations organisms move according to L\'{e}vy processes. Especially in the absence of an attractant \cite{EGP18} or  when targets are rare and can be visited any number of times \cite{VBH99}, the distribution of runs asymptotically behaves like an inverse square  power-law distribution leading to L\'{e}vy walks as optimal movement and search strategies.
L\'{e}vy walks were also used in modelling human mobility \cite{RSH11} and swarm robotic systems \cite{RG20}, see also \cite{ZDK15} for an overview. 

In this work we derive the following fractional cross-diffusion system:
\begin{equation}\label{sys.final}
\begin{aligned} 
& \pa_t u_i + \sigma_i(- \Delta)^{\alpha}u_i - \diver\Big( \sum_{j=1}^n a_{ij} u_i \nabla^{\beta} u_j\Big)  = 0 \; \; \; \;  &&\text{in } \; (0,T) \times \R^d, \\
 & u_i(0, \cdot) = u^0_{i}  \; \; \; \; && \text{in } \; \R^d, \quad i=1,\ldots, n,
 \end{aligned} 
\end{equation}
for $T>0$ with $a_{ij} \in \R $ and $\sigma_i>0$. Here $\nabla^\beta u_j := \nabla ( (-\Delta)^{\frac{\beta - 1} 2} u_j)$  and  we consider $\alpha \in (1/2,1)$ and $\beta \in (0,1)$ in such a way that $2\alpha > \beta +1$, meaning  that self-diffusion dominates cross-diffusion effects. This restriction is necessary in our derivation of the limiting result as well as in proving the well-posedness of the cross-diffusion system \eqref{sys.final}. 


The form of the non-local interaction in the fractional cross-diffusion term in \eqref{sys.final} is motivated by the porous medium equation with fractional potential pressure that has been treated by Caffarelli and V\'{a}zquez, see \cite{CV_2011, CV_2011_2, CSV_2013} and the overview \cite{V_2017}. Their equation is, in particular, given by $v_t = \nabla \cdot (v \nabla p(v) ),$ where the pressure $p(v) =  (-\Delta)^{-s}v$ for $s \in (0,1)$. This model has appeared in the context of the macroscopic evolution and the phase segregation dynamics of particles systems with short- and long-range interactions  \cite{GL_97, GL_98,GLM_2000}. It, furthermore, appears in the study of dislocations \cite{BKM_2010, H_72}.

The starting point of our analysis is the microscopic description of the particle dynamics, which will be introduced in detail in Section \ref{subsec.1.1}. It is given in terms of a system of SDEs  assuming that there are $n$ species, each with $N_i$ particles for $i = 1, \ldots, n$. In our model, the dynamics are influenced by two forces: a nonlocal mutual interaction between the subpopulations, which scales in a moderate way as the particle number increases, and random dispersal, which is modelled by $\sum_{i=1}^n N_i$ i.i.d.~L\'{e}vy processes. For simplicity, we assume that the i.i.d.~L\'{e}vy processes are taken to correspond to the fractional Laplacian (in the sense of \eqref{frac.L} below), which then appears in \eqref{sys.final}. However, as in the derivation in \cite{SO_2017}, we expect that our analysis holds for any $2\alpha$-stable L\'{e}vy processes.

In the limiting procedure we use the methods developed by Oelschl\"{a}ger~\cite{ O_89} and Stevens~\cite{S_2000}. The article \cite{O_89} is part of a series of works by the author on this subject (see also \cite{O_85, O_87, O_90}), the first of which drew some inspiration from the previous work \cite{CP_83}, where a propagation of chaos result for the Burgers' equation is proven. The propagation of chaos result contained in \cite{O_85} was then generalized by M\'{e}l\'{e}ard and Roelly-Coppoletta in~\cite{MRC_87}. Furthermore, in \cite{HL16} propagation of chaos is shown for a Keller-Segel system with fractional diffusion. The main technique in \cite{O_89, S_2000} and which we also use here is to, using It\^{o}'s formula and martingale estimates, examine the asymptotic behaviour of a regularization of the empirical measure, now viewed as a stochastic process taking values in $L^2(\R^d)^n$. The novelty in our analysis lies in the structure of the fractional cross-diffusion terms, whose handling requires some new technical ingredients.

The limiting procedure that we use relies on the existence and regularity of solutions to the system \eqref{sys.final}. These issues are addressed in the final two theorems of this paper. While the proofs are quite involved, the main ideas that we use are classical and rely on the Banach fixed-point theorem and higher-order \textit{a priori} estimates. Due to the fractional nature of \eqref{sys.final},  in our arguments we require the use of the fractional Leibniz rule and Gagliardo-Nirenberg inequality. For the reader's convenience, any results concerning fractional Sobolev spaces that are needed in our proofs are listed in the Appendix.

The structure of this paper is as follows: We first introduce our microscopic model and review some standard facts about L\'{e}vy processes. In Section~\ref{ref_2}, we formulate the main results. Then, in Sections~\ref{ref_3} and~\ref{ref_4} we give the arguments for our convergence results. In Sections~\ref{ref_5} and~\ref{ref_6} we prove existence and uniqueness of non-negative solutions for the limiting macroscopic model. 


\subsection{Description of the microscopic dynamics}\label{subsec.1.1} We consider the following system of $\sum_{i=1}^n N_i$ SDEs:
 \begin{align}\label{micro.sys}
  dX_i^{k,N}(t) = - \sum_{j=1}^n \frac{1}{N}\sum_{\ell=1}^{N_j} a_{ij} \nabla^{\beta} \hat V_N\big(X_{i}^{k,N}(t)-X_{j}^{\ell,N}(t) \big)dt + \sqrt{2 \sigma_i} \, d L_i^k(t), 
 \end{align}
for $i=1,\ldots, n$ and $k = 1,\ldots, N_i$, with $a_{ij} \in \R$ and $\sigma_i>0$. Here, $X_i^{k,N}(t)$ denotes the position of the $k$-th particle of species $i$ at time $t>0$ and the $L_i^k$  are i.i.d.~L\'{e}vy processes corresponding to the fractional Laplacian.

The interaction potential that we use is $(-\Delta)^{\frac{\beta-1}{2}}\hat V_N$ for $\beta \in (0,1)$. Here, $\hat V_N$ is defined in terms of a radially symmetric probability density $W_1$ as
\begin{align}\label{kernel_1}
\hat V_N := W_N * \hat W_N \; \textrm{ for } \,  W_N(x) = \kappa_N^d W_1(\kappa_N x) \, \textrm{ and } \, \hat W_N(x) = \hat \kappa_N^d W_1(\hat \kappa_N x), 
\end{align}
where $\kappa_N = N^{\kappa/d}$ and $\hat \kappa_N = N^{\hat \kappa/d}$ for exponents $\kappa$ and $\hat \kappa$ that satisfy conditions given in \eqref{scaling} and $\kappa > \hat{\kappa}$. The properties satisfied by $W_1$ are listed in \eqref{assumption_1}-\eqref{assumption_3}.

In order for our limiting theorems to hold, it is important that the scaling of the interaction is \textit{moderate}. In particular, we consider an interaction to be ``moderate'' if, in the many-particle limit, the mutual interaction does not depend on the microscopic fluctuations of the particle densities.  To verify that our interaction is moderate we perform a heuristic calculation, similar to \cite{O_85}: Assume for simplicity that the processes $X_{i}^{k,N}(t)$ for $i =1,\ldots,n$ and $k = 1,\ldots,N_i$ are i.i.d.~with a smooth density $\mu(t, \cdot)$ and, furthermore, that each $N_i = N$. We consider the variance of the force exerted at $x \in \R^d$, which is given by 
\begin{align*}
\begin{split}
& J:=\textrm{Var} \Big(\frac{1}{N} \sum_{j=1}^n  \sum_{k=1}^{N_j} \nabla^{\beta}  \hat V_N\big(x-X_{j}^{k,N}(t) \big)\Big) \\
&\leq \frac{C}{N} \Big[  \int_{\R^d}   \big|\nabla^{\beta} \hat V_N\big(x- y\big)\big|^2  \mu(t,y)  \,\dd y -  \Big( \big(\hat V_N   \ast \nabla^{\beta} \mu(t, \cdot)\big) (x)\Big)^2\Big].
\end{split}
\end{align*}
We treat the first term on the right-hand side of the above expression using
\begin{align}
\begin{split}
\label{moderate_1}
 &\int_{\R^d} \big|\nabla^{\beta}  \hat V_N\big(x- y\big)\big|^2  \mu(t,y) \, \dd y = \int_{\R^d} \big|\nabla^{\beta}(\hat W_N\ast W_N)(x-y)\big|^2  \mu(t,y)  \dd y\\
& \quad = \int_{\R^d}  \kappa_N^{d + 2 \beta} \hat \kappa_N^{2d} \,  \big|(\hat W_1 (\hat \kappa_N \cdot ) \ast  \nabla^{\beta}W_1 (\kappa_N \cdot )) (s) \big|^2  \mu(t,x + \kappa_N^{-1} s)   \dd s,
\end{split}
\end{align}
where we have made the change of variables $s =  \kappa_N (y-x)$.  We notice that
\begin{align*}
\begin{split}
\Big| ( \hat W_1 (\hat \kappa_N \cdot ) \ast \nabla^{\beta} W_1 (\kappa_N \cdot )) (s)  \Big|
& \leq \int_{\R^d}  \big|\hat W_1 (\hat \kappa_N z)  \nabla^{\beta} W_1 (s-\kappa_N z) \big| \, \dd z \\
& =  \kappa_N^{-d} \int_{\R^d}\Big| \hat W_1 \Big(\frac{\hat \kappa_N}{\kappa_N} s^{\prime}\Big) \Big| \, \big|  \nabla^{\beta}W_1 (s- s^{\prime}) \big|    \, \dd s^{\prime},\\
\end{split}
\end{align*}
where $s^{\prime} = \kappa_N z$. Plugging this into \eqref{moderate_1} and using that $ \kappa > \hat  \kappa$ yields that 
\begin{align*}
J \lesssim N^{-1} \kappa_N^{-d+ 2 \beta} \hat \kappa_N^{2d}  \leq N^{-1} \kappa_N^{d + 2\beta}  \rightarrow 0 \quad \text{ as } N\rightarrow \infty, \quad \text{ when } \kappa \text{ satisfies \eqref{scaling}}.
\end{align*}

\subsection{Regularized empirical processes}
The empirical processes $S^N_i(t)$ corresponding to the subpopulations are given by 
\begin{align*}
S^{N}_i(t):=\frac{1}{N} \sum_{k=1}^{N_i} \delta_{X_i^{k,N}(t)}, \qquad \langle S^{N}_i(t), \psi \rangle = \frac{1}{N} \sum_{k=1}^{N_i} \psi ( X_i^{k,N}(t))
\end{align*}
for $i =1,\ldots,n$ and any real-valued function $\psi$ on $\mathbb{R}^d$. Throughout this paper, for any real-valued measure $\nu$, we use the notation
  $$  \langle \nu , \psi \rangle  := \int_{\R^d}  \psi(x) \nu( dx). $$  

In Theorem \ref{Convergence}, we show that certain regularizations of the empirical processes converge to the solution of a regularized version of \eqref{sys.final}. We introduce the following regularized versions of the empirical processes:
\begin{equation}\label{def_densities}
\begin{aligned}
 \hat s_i^N(t,x)&:=\big(S_i^N(t)*\hat V_N\big)(x),\qquad 
  h_i^N(t,x)&:=\big(S_i^N(t)*W_N\big)(x),
\end{aligned}
\end{equation}
where we use the notation from \eqref{kernel_1}.  With \eqref{def_densities} we are able to rewrite the system \eqref{micro.sys} as
 \begin{equation}\label{micro.sys.new}
   dX_i^{k,N}(t) = - \sum_{j=1}^n a_{ij} \nabla^{\beta} \hat{s}^{N}_j\big(t,X_{i}^{k,N}(t)\big)dt + \sqrt{2 \sigma_i}\,  d L_i^k(t),  
 \end{equation}
 for $k=1, \ldots, N_i$ and $i=1, \ldots, n$.

\subsection{It\^{o}'s formula for L\'{e}vy processes} \label{Ito_intro} 
For $i =1, \ldots, n$ and $k= 1,\dots, N_i$, the $L^k_i(t)$ in \eqref{micro.sys.new} are i.i.d.~L\'{e}vy processes on a filtered probability space $(\Omega, \mathcal{F}, \mathcal{F}_t,\mathbb{P})$ corresponding to $(-\Delta)^{\alpha}$. We mean this in the sense that the L\'{e}vy measure $\nu$ of the processes is given by 
$$
d \nu :=  \frac{c_{d,\alpha}}{|z|^{d+2\alpha}} \dd z,
$$
where $1/2<\alpha<1$ and $c_{d,\alpha}$  is a dimensional constant that is, e.g., given in \cite[Section 3]{NPV_2012}.  With $\nu$ defined as above, for any real-valued function $\psi$ with sufficient regularity, the nonlocal operator $\mathcal{L}$ corresponding to the $L^k_i(t)$ satisfies
\begin{align}
\begin{split} \label{frac.L}
\mathcal{L}\psi & := \int_{\R^d} \big( \psi(x + z) - \psi(x) - \nabla \psi(x) \cdot z \chi_{|z|\le 1}\big) \, \dd \nu(z)\\
&\qquad \  = -c_{d,\alpha} \, \textrm{P.V.}\int_{\R^d}\frac{\psi(x) - \psi(y)}{|x-y|^{d+2\al}}\, \dd y
 =: -(-\Delta)^{\alpha} \psi,
\end{split}
\end{align}
where P.V.~denotes the Cauchy principal value. 

As it is the main tool of our derivation, we now give It\^{o}'s formula for the dynamics determined by~\eqref{micro.sys.new}. The natural space of test functions is given by 
\begin{align*}
C_b^{1,2\alpha}( \R_+ \times \R^d) = \Big\{ \psi \in C^{1,1}_b(  \R_+ \times \R^d) \, | \,  \, (-\Delta)^{\alpha} \psi \in C^0_b(\R_+\times \R^d) \Big\},
\end{align*}
where $C^0_b(\R_+ \times \R^d)$ is the space of continuous bounded functions and $C^{1,1}_b(  \R_+ \times \R^d)$ also requires continuous and bounded  derivatives with respect to time and space. For $\psi \in C_b^{1,2\alpha}( \R_+ \times \R^d) $ the dynamics given by \eqref{micro.sys.new} then yield that
\begin{align}\label{ito}
\begin{split}
& \langle S_i^N(t),\psi(t, \cdot)\rangle  =  \langle S_i^N(0),\psi(0, \cdot) \rangle  
-  \sum_{j=1}^n\int_0^t \big\langle S_i^N(\tau), a_{ij} \na^{\beta} \hat s_j^N\big(\tau, X_i^{k,N}(\tau)\big) \cdot \na \psi(\tau, \cdot)\big\rangle \,  \dd \tau 
\\& -\sigma_i \int_0^t \big\langle S_i^N(\tau),(-\Delta)^{\alpha}  \psi(\tau, \cdot) \big\rangle  \, \dd \tau
+ \frac{1}{N} \sum_{k=1}^{N_i} \int_0^t \int_{\R^d\setminus\{0\}} \hspace{-0.2 cm }   \sqrt{2\sigma_i} \, D_z\psi\big(\tau, X_i^{k,N}(\tau_-)\big) \,  \Nn_i^k( \dd z \dd \tau).
\end{split}
\end{align}
 Here, $X_i^{k,N}(\tau_-)$ denotes the one-sided limit of $X_i^{k,N}(t)$ as $t  \nearrow \tau$ and 
$$
 D_z f(y):= f(y+z) -f(y) \quad \textrm{for any} \quad z,y \in \R^d.
$$
Furthermore, the compensated Poisson measure $\Nn_i^k$ is defined by 
$$
\Nn_i^k((0, t]\times U) := \mathcal N_i^k((0, t]\times U) - t \nu(U) \quad \textrm{for any } \quad U \in \mathcal B(\R^d\setminus \{0 \})  \textrm{ and } t>0,
$$
where $\mathcal N_i^k$ is the Poisson measure 
\begin{align*}
\mathcal{N}_i^k((0, t]\times U) := \sum_{\tau \in (0, t]} \textbf{1}_U(L^k_{i}(\tau) - L^k_{i}(\tau_-)) .
\end{align*}
The above expression is a sum because it can be shown that a.s.~the L\'{e}vy process has only a finite number of jumps in a bounded interval. For the reader's convenience, we remark that a useful reference on L\'{e}vy processes is \cite{A_2004}.


 \subsection{Additional notation} \label{notation} Unless otherwise stated, we use the convention that the indices $i,j = 1,\ldots,n $ denote species, whereas $k, \ell = 1,\ldots,N_i$ are used to denote the $k$-th (or $\ell$-th) particle.
 
We will use $\| \cdot \|_p$ to denote $\| \cdot \|_{L^p(\R^d)}$ for $p \in (1, \infty]$. Furthermore, for $\alpha \in (0,1)$ and $p \in (1, \infty]$ we use $\| \cdot \|_{W^{\alpha, p}}$ to denote $\| \cdot \|_{W^{\alpha, p}(\R^d)}$ and similarly $\| \cdot \|_{H^{\alpha}}$ denotes $\| \cdot \|_{H^{\alpha}(\R^d)}$. For $T>0$, we denote the natural norm associated with \eqref{sys.final} on $(0,T) \times\R^d$:
\begin{align}
\label{natural_norm}
 \|f\|^2_{[0,T]} := \sup_{0 \leq t \leq T}\|f(t)\|_{2}^2 + \int_0^T \|( - \Delta)^{\frac \alpha 2} f(t)\|_{2}^2 \, \dd t.
\end{align}

As in \cite{O_89, S_2000}, for two positive finite real-valued measures $\nu_1, \nu_2 \in \mathcal{M}(\R^d)$, we consider
 $$
 d(\nu_1, \nu_2) := \sup\Big\{ \big\langle \nu_1 - \nu_2, \psi \big\rangle \; | \;   \psi \in C^1_b(\R^d), \; \; \|\psi\|_{L^{\infty}(\R^d)} + \| \nabla \psi \|_{L^{\infty}(\R^d)}\le 1 \Big\}.
 $$
Throughout the article, we denote $\hat{u}^N= (\hat{u}^N_1, \ldots, \hat{u}^N_n)$  and 
$
\| \hat{u}^N \|^2_2 =  \sum_{i=1}^n \| \hat{u}^N_i \|_2^2 ,
$
 analogous notation is used for all other $n$-dimensional vectors (e.g.~$u$, $h^N$, $s^N$, and $\hat{s}^N$) and other norms.
 We use the notation $``\lesssim"$ in order to denote $``\leq C(n, \alpha, \beta, a_{ij}, \sigma_i, d)"$. If there are additional dependencies for the universal constant, e.g.~on a time $T>0$, then we write $``\lesssim_T"$. Often the universal constant may not depend on the full retinue of $n, \alpha, \beta, d, a_{ij},$ and $\sigma_i$, but we still use the notation $``\lesssim"$.

 \section{Formulation of the main results} \label{ref_2}
   
We have already defined $\hat V_N,$ $W_N$, and $\hat W_N$ in terms of $\kappa_N = N^{\kappa/d}$ and $\hat \kappa_N = N^{\hat \kappa/d}$ in \eqref{kernel_1}.  Now, we give the conditions on $\kappa$ and $\hat \kappa$. For a given arbitrarily small $\rho>0$, we require that
\begin{align}\label{scaling}
0<\hat \kappa < \frac{\delta d}{d+4} \qquad \textrm{and} \qquad \delta (1 + \rho)d < \kappa < \frac{d}{d+3},
\end{align}
for some $\delta \in (0,1)$. These conditions are essential for the limiting argument in Theorem \ref{Convergence}.  We shall also use the notation
\begin{equation}\label{delta}
\delta_N := N^{-\delta}.
\end{equation}
We assume the following  properties satisfied by $W_1$:
\begin{align}
&  F(W_1)  \in C^2_b(\R^d), \label{assumption_1}\\
& |F(W_1)(\xi)|  \lesssim  \exp(-C^{\prime} |\xi|),  \label{assumption_2}\\
& | \Delta F(W_1)(\xi)| \lesssim (1 + |\xi|^2) |F(W_1)(\xi) |,  \label{assumption_3}
 \end{align}
where $F$ denotes the Fourier transform and $C^{\prime}>0$ is a constant. We remark that the conditions \eqref{scaling} are similar to those given in (1) of \cite{S_2000}. We, furthermore, mention that the conditions \eqref{assumption_1}-\eqref{assumption_3} are likewise similar to (6)-(8) in \cite{S_2000} and (3.2)-(3.4) in \cite{O_89}, where in both \cite{S_2000} and \cite{O_89} the authors include an additional assumption concerning the decay of $W_1$ along rays.

The first theorem of this paper is a convergence result that shows that a certain regularization of the empirical measure, namely $h^N$ defined in \eqref{def_densities}, converges to $\hat{u}^N$ solving 
\begin{equation}\label{syst:reg}
\begin{aligned} 
& \pa_t \uh_i^N + \sigma_i (-\Delta)^{\alpha}\uh_i^N - \diver \Big(\sum_{j=1}^n  a_{ij} \uh_i^N \na^{\beta}\big(\uh_j^N * \hat W_N\big)\Big)=0    &&\text{ in } \; (0,T) \times \R^d,  \\
& \uh_i^N(0, \cdot) = u^0_{i}  && \text{ in } \; \R^d,  
 \end{aligned} 
\end{equation}
for $i = 1,\ldots, n $ and $T>0$. 
The convergence result is as follows:
\begin{theorem}\label{Convergence}
Let $\alpha \in ( 1/2,1)$ and $\beta \in (0,1)$ satisfy $\beta+1 < 2 \alpha$ and, furthermore, when $d=1$ that $\alpha  - \beta < 1/2$ or $\alpha < 3/4$. The kernel $W_1$ satisfies \eqref{assumption_1}-\eqref{assumption_3}. Assume that $u^0 \in H^s(\R^d)^n$, for $s> d/2 +2$, is non-negative and satisfies 
\begin{align}
&   \lim_{m \to \infty} \sup_{N \in \mathbb N} \mathbb{P} \left[ \sum_{i=1}^n\langle S_i^N(0),  1 \rangle  \geq m\right] = 0, 
\label{initial_condition_assumption_1} \\
&  \lim_{N \to \infty}\mathbb{P} \left[\|h^N(0,\cdot)- u^0\|_{2}^2 \geq \delta_N^{1+\rho}\right] = 0, \label{initial_condition_assumption_2} 
 \end{align}
where $\delta$ and $\rho$ satisfy \eqref{scaling} and we use the notation \eqref{delta}. Then, we have 
 \begin{equation*}
      \lim_{N \to \infty}\mathbb{P} \left[\|h^N -  \uh^N\|^2_{[0,T]}\geq\delta_N\right] = 0,
 \end{equation*}
 where $\hat{u}^N$ solves \eqref{syst:reg}. 
\end{theorem}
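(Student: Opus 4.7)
The plan is to compare $h^N$ and $\hat u^N$ directly in the natural norm $\|\cdot\|_{[0,T]}$, via an energy-type estimate for the difference $Z^N := h^N - \hat u^N$, and then close by a Gronwall/stopping-time argument in probability. The first step is to produce an evolution equation for $h_i^N(t,x) = \langle S_i^N(t), W_N(x-\cdot)\rangle$ by applying the It\^o formula \eqref{ito} with the test function $\psi(y) = W_N(x-y)$ (which lies in $C_b^{1,2\alpha}$ by \eqref{assumption_1}-\eqref{assumption_3}). Since $(-\Delta)^\alpha$ commutes with convolution, this yields, in a distributional sense,
\begin{equation*}
\pa_t h_i^N + \sigma_i (-\Delta)^\alpha h_i^N + \sum_{j=1}^n \diver\!\Big(\big(a_{ij}\,S_i^N\,\na^\beta \hat s_j^N\big) * W_N\Big) = \pa_t M_i^N,
\end{equation*}
where $M_i^N$ is the compensated Poisson martingale appearing in the last line of \eqref{ito} with $\psi = W_N(x-\cdot)$. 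Recalling that $\hat s_j^N = h_j^N * \hat W_N$, one may then rewrite the nonlinear flux as $h_i^N \na^\beta(h_j^N*\hat W_N)$ plus a \emph{commutator error} $C_{ij}^N := (S_i^N \na^\beta \hat s_j^N)*W_N - h_i^N\,\na^\beta \hat s_j^N$, so that $Z_i^N$ satisfies a parabolic equation driven by $\pa_t M_i^N$ and by differences of the cross-diffusion flux.

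The second step is the energy estimate: I would test the equation for $Z_i^N$ against $Z_i^N$ itself in $L^2(\R^d)$, sum over $i$, and integrate in time. The self-diffusion term contributes $+\sigma_i\|(-\Delta)^{\alpha/2}Z_i^N\|_2^2$, which is the dissipation one reads off from $\|\cdot\|_{[0,T]}$. The cross-diffusion difference splits into
\begin{equation*}
h_i^N\,\na^\beta(h_j^N*\hat W_N) - \uh_i^N\,\na^\beta(\uh_j^N * \hat W_N) = Z_i^N\,\na^\beta(h_j^N*\hat W_N) + \uh_i^N\,\na^\beta(Z_j^N * \hat W_N),
\end{equation*}
and after an integration by parts the derivative falls on $Z_i^N$, giving a pairing with $(-\Delta)^{\alpha/2} Z_i^N$ (since $2\alpha > \beta+1$ ensures $\na^\beta$ has order strictly less than $2\alpha - 1$, so half a derivative can always be spared for Young's inequality). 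The remaining factors are controlled by $L^\infty$ bounds on $\uh^N$ and $\na^\beta(h^N*\hat W_N)$; here I would use the fractional Gagliardo-Nirenberg inequality from the Appendix together with the well-posedness and regularity bounds for $\uh^N$ from the later sections, plus the smoothing effect of $W_N$ and $\hat W_N$ for quantities involving $h^N$. The commutator term $C_{ij}^N$ must be shown to be of lower order than $\delta_N$; this is precisely where the \emph{moderate} scaling \eqref{scaling} enters, via a heuristic of the same flavour as the variance computation leading to \eqref{moderate_1} applied pathwise, together with the fractional Leibniz rule to move the $\na^\beta$ off of $\hat s_j^N$ when needed.

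The main obstacle, which I expect to be the most delicate step, is controlling the martingale contribution $\int_0^t \langle M_i^N(\dd\tau), Z_i^N(\tau)\rangle$ in the natural norm uniformly in time. I would use Doob's maximal inequality together with the Burkholder-Davis-Gundy inequality adapted to compensated Poisson integrals, writing the predictable quadratic variation in terms of $D_z W_N(x-X_i^{k,N})$ integrated against $\dd\nu(z)$; the singularity $|z|^{-d-2\al}$ of $\nu$ is absorbed by $(-\Delta)^{\alpha/2}W_N$, producing a factor of $\kappa_N^{d+2\alpha}/N$ that is precisely kept below $\delta_N^{1+\rho}$ thanks to the upper bound on $\kappa$ in \eqref{scaling}. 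To close the estimate I would introduce the stopping time $\tau_N := \inf\{t : \|Z^N\|_{[0,t]}^2 \geq \delta_N\} \wedge T$, which makes the nonlinear Gronwall factor deterministic, then apply Gronwall to the stopped inequality and combine the initial-data assumption \eqref{initial_condition_assumption_2} with the martingale and commutator estimates to show that $\mathbb P[\tau_N < T] \to 0$. Taking $N \to \infty$ then yields the claimed convergence in probability in $\|\cdot\|_{[0,T]}$.
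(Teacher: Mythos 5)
Your overall architecture matches the paper's: derive a stochastic equation for $h^N$ from It\^o's formula, compare with the regularized PDE for $\hat u^N$ via an $L^2$-energy identity for the difference, control the martingale with Doob-type inequalities and the It\^o isometry, introduce a stopping time, and close with Gr\"onwall plus Markov. Your bilinear splitting of the cross-diffusion difference corresponds to the paper's terms $(\textrm{\RomanNumeralCaps{3}}.1)$ and $(\textrm{\RomanNumeralCaps{3}}.2)$, and your ``commutator error'' $C_{ij}^N$ is exactly the content of the kernel $R^N_j(\tau,x,y)=W_N(x-y)\big(\nabla^\beta\hat s_j^N(\tau,x)-\nabla^\beta\hat s_j^N(\tau,y)\big)$ that the paper isolates in terms $(\textrm{\RomanNumeralCaps{2}})+(\textrm{\RomanNumeralCaps{3}}.3)$. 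Phrasing this as testing an SPDE for $Z^N$ against $Z^N$ is equivalent to the paper's direct computation of $\langle h^N,h^N\rangle$, $\langle h^N,\hat u^N\rangle$, $\langle \hat u^N,\hat u^N\rangle$, provided one applies It\^o's product rule carefully (which is also where the diagonal term $(\textrm{\RomanNumeralCaps{6}})\sim \kappa_N^{d+2\alpha}/N$ comes from).

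The genuine gap is in your treatment of the commutator error. You propose to show it is of lower order ``via a heuristic of the same flavour as the variance computation leading to \eqref{moderate_1} applied pathwise, together with the fractional Leibniz rule.'' This cannot be made rigorous as stated: the variance computation assumes the $X_i^{k,N}$ are i.i.d.\ with a smooth density, which is false, and it bounds an expectation, not a pathwise quantity that must be absorbed into a Gr\"onwall loop. Moreover, the fractional Leibniz rule is of no help for this term, because the quantity to estimate is
$\int_{\R^d}\big|\tfrac1N\sum_{k}(-\Delta)^{\frac{1-\alpha}{2}}R_j^N(\tau,X_i^{k,N}(\tau),y)\big|^2\,\dd y$,
which involves the empirical measure directly and not a product of Sobolev functions. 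The actual argument --- and what the paper flags as its main new technical contribution (Step~3.1) --- is a frequency-localization in Fourier at scale $|\xi|=\kappa_N^{1+\varepsilon}$: the low-frequency piece $I_1$ is handled with Lemma~\ref{conv_2} and the uniform $C^2$ bound \eqref{estim_s_N}, while the high-frequency piece $I_2$ (new, forced by the fractional order $(-\Delta)^{(1-\alpha)/2}$) is split further into $J_1,J_2$ and then $K_1,K_2$ and killed by the exponential decay \eqref{assumption_2} of $F(W_1)$. Without this, the bound $\kappa_N^{2(1-\alpha)(1+\varepsilon)}\kappa_N^{2\varepsilon-2}\|h_i^N\|_2^2 + (\cdots)\exp(-C'\kappa_N^\varepsilon)$ is not obtained, and the Gr\"onwall constant cannot be made to close.

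A secondary omission: you attribute the needed uniform $L^\infty$/$C^1$ control of $\nabla^\beta(h^N*\hat W_N)$ on the stopped interval to ``the smoothing effect of $W_N$ and $\hat W_N$.'' Convolution with these kernels does not smooth uniformly in $N$ --- the relevant operator norms blow up like powers of $\hat\kappa_N$. The uniform bound $\sup_{t\le t^N\wedge T}\|\hat s^N(t)\|_{C^2}\lesssim 1$ of \eqref{estim_s_N} comes from combining the stopping-time control $\|h^N-\hat u^N\|_2\le\sqrt{\delta_N}$ with the specific cancellation $\sqrt{\delta_N}\hat\kappa_N^{2+d/2}\le 1$, which is exactly the role of the $\hat\kappa<\delta d/(d+4)$ restriction in \eqref{scaling}, together with the uniform-in-$N$ estimate \eqref{estim_hu_N} for $\hat u^N$ from Theorem~\ref{existence_regular}. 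This step should be made explicit rather than left implicit.
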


We make a couple of remarks concerning the above theorem:

\begin{remark}[Initial condition]\label{rem.2}
\textnormal{Notice that the assumptions \eqref{initial_condition_assumption_1} and \eqref{initial_condition_assumption_2} ensure that $N_i$, which is the number of  particles of species $i$, is of the same order of magnitude as the scaling parameter $N$, i.e.~$N_i \approx N$. An example of an admissible initial condition would be to have $N$ i.i.d.~random variables for species $i$ with distribution $u_i^0 / \| u_i^0\|_{1}$ for $i = 1, \ldots, n$ (see \cite{O_87}).}
\end{remark}

\begin{remark}[Regularization] \label{scaling_why}
\textnormal{In the formulation and proof of Theorem~\ref{Convergence} our use of the regularized problem~\eqref{syst:reg} is similar to \cite[Theorem 6.2]{S_2000}. As we will see in Step 1, the different scalings of the kernels  $W_N$ and $\hat W_N$ are required to obtain uniform (in $N$) boundedness of $\sup_{0\leq t \leq t^N} \| \hat s^N(t)\|_{C^2(\R^d)}$, where $t^N$ is an appropriately defined stopping time. In \cite{S_2000} the analogous estimate is (28), 
whereas in \cite{O_89} 
some ellipticity condition on the cross-diffusion term is used to estimate corresponding terms.
}
 \end{remark}
 
 \begin{remark}[Dominating self-diffusion] \label{dom_self} \textnormal{The restriction on $\beta$, i.e.\ $\beta+1 < 2 \alpha$, including $\beta< \alpha$ for $\alpha < 1$, implies that the self-diffusion dominates the cross-diffusion. The main place we use this assumption is in the well-posedness and regularity results for problems~\eqref{sys.final} and~\eqref{syst:reg}. In particular, we highlight the derivation of the higher-order \textit{a priori} estimates, uniform in $N$, for $\hat{u}^N$ in Theorem~\ref{existence_regular}, which are used in (27). Since our cross-diffusion matrix is not assumed to be triangular, the condition on $\beta$ is also used in Step~3.1 of the proof of Theorem~\ref{Convergence}. Throughout the paper we consider $\alpha<1$, if $\alpha =1$ then many of our calculations could be simplified. We remark that in \cite{S_2000} a cross-diffusion system with triangular cross-diffusion matrix and $\alpha = \beta =1$ is analyzed. 
 }
 \end{remark}
 
In our second theorem, we post-process the result of Theorem \ref{Convergence} in order to compare the not regularized objects, the empirical processes $S^N_i$ and  $u_i$ solving~\eqref{sys.final}. 
\begin{theorem}\label{Convergence_2}
 Assume that the conditions of Theorem~\ref{Convergence} are satisfied and that 
\begin{align}\label{assum_init_2}
\sum_{i=1}^n\langle  u_{i}^0, \psi \rangle \leq C \qquad \textrm{and} \qquad 
 \lim_{m \to \infty} \sup_{N \in \mathbb N} \mathbb{P} \Big[\sum_{i=1}^n \langle S_i^N(0), \psi^2 \rangle  \geq m\Big] = 0, 
\end{align}
 where $C$ is a constant and $\psi(x) = \log(2+ x^2)$, then
 \begin{equation*}
      \lim_{N \to \infty}\mathbb{P} \Big[ \sum_{i=1}^n\sup\limits_{0\le t\le T} d( S_i^N(t), u_i(t))  \ge \mu \Big] = 0
\end{equation*}
for any $\mu >0$. 
 \end{theorem}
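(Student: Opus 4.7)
The plan is to deduce Theorem~\ref{Convergence_2} from Theorem~\ref{Convergence} by inserting two intermediate approximations via the triangle inequality
\[
d(S_i^N(t), u_i(t)) \leq d(S_i^N(t), h_i^N(t)) + d(h_i^N(t), \hat{u}_i^N(t)) + d(\hat{u}_i^N(t), u_i(t)),
\]
where $h_i^N$ and $\hat{u}_i^N$ are identified with their absolutely continuous measures. I show that each of the three terms tends to zero in probability uniformly in $t \in [0,T]$.

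The first term is handled purely by the mollification scale: for an admissible test function $\psi$ with $\|\psi\|_\infty + \|\nabla\psi\|_\infty \leq 1$, one rewrites $\langle S_i^N - h_i^N, \psi\rangle = \langle S_i^N, \psi - \psi * W_N\rangle$ and estimates $\|\psi - \psi * W_N\|_\infty \lesssim \kappa_N^{-1}$ using radial symmetry of $W_1$; combined with conservation of mass $\langle S_i^N(t),1\rangle = \langle S_i^N(0),1\rangle$, which is tight in $N$ by \eqref{initial_condition_assumption_1}, this forces the term to vanish since $\kappa_N\to\infty$. The third term is deterministic: the difference $w^N := \hat u^N - u$ satisfies a PDE that I would analyze via a weighted energy estimate, splitting the drift into contributions involving $w^N$ itself and the mollification error $\hat W_N * u_j - u_j$, and using the uniform-in-$N$ a priori bounds on $\hat u^N$ from the regularized existence result together with $2\alpha > \beta+1$ to absorb the cross-terms into the $(-\Delta)^{\alpha/2}$-dissipation. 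This yields $\|w^N\|_{[0,T]}\to 0$, which is then converted to convergence in $d$ by the same tail argument used for the second term.

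The second term is the most delicate and requires turning the $L^2$-control from Theorem~\ref{Convergence} into control in the metric $d$. For admissible $\psi$, I split
\[
|\langle h_i^N - \hat{u}_i^N, \psi\rangle| \leq \|h_i^N-\hat u_i^N\|_2\, |B_R|^{1/2} + \int_{B_R^c}(h_i^N + \hat u_i^N)\,dx
\]
and control the tails by $\int_{B_R^c} f\,dx \leq \log(2+R^2)^{-1}\langle f, \psi\rangle$ with $\psi(x)=\log(2+|x|^2)$. Using $\log(2+|x-y|^2) \leq \psi(x) + \log(1+|y|^2)$ and the finiteness of the logarithmic moment of $W_1$, the logarithmic moment of $h_i^N$ reduces to $\langle S_i^N(t), \psi\rangle + C\langle S_i^N(t),1\rangle$, which I propagate in time by applying It\^{o}'s formula~\eqref{ito} to $\psi$: both $\nabla\psi$ and $(-\Delta)^\alpha \psi$ are bounded on $\R^d$ for $\alpha\in(1/2,1)$, and the compensated Poisson martingale is controlled by Doob's inequality, using the stronger $\psi^2$-moment assumption in \eqref{assum_init_2} to bound the predictable quadratic variation. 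A uniform-in-$N$ bound on $\langle \hat u_i^N(t), \psi\rangle$ is obtained analogously by testing~\eqref{syst:reg} against $\psi$. Choosing $R = R_N \to \infty$ such that $R_N^d \delta_N \to 0$ while $\log(2+R_N^2)\to\infty$ and invoking Theorem~\ref{Convergence} completes the argument. The principal difficulty lies precisely in this logarithmic moment estimate for $S_i^N$: handling $(-\Delta)^\alpha$ acting on $\psi(x)=\log(2+|x|^2)$ and extracting the predictable-quadratic-variation bound from the compensated Poisson measure is what makes both the $\psi^2$ condition in \eqref{assum_init_2} and the restriction $\alpha > 1/2$ essential.
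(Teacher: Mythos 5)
Your proposal follows essentially the same route as the paper's proof: you introduce the same intermediate quantities $h^N$ and $\hat u^N$, split each test function into a compactly supported part (estimated in $L^2$ with an $R^{d/2}$ factor) and a far-field part controlled via logarithmic moments with weight $\psi(x)=\log(2+|x|^2)$, propagate $\langle S_i^N,\psi\rangle$ in time via It\^o's formula combined with Doob's inequality and the $\psi^2$-moment hypothesis, and handle the $S^N$-vs-$h^N$ discrepancy via the $\kappa_N^{-1}$ mollification scale. The paper packages the three-term triangle inequality into a single estimate $(61)$ and obtains the $u$-tail bound from testing the limit system rather than the regularized one, but these are cosmetic differences; do note that the paper is careful to invoke the non-negativity of $u_i$ (from Theorem~\ref{existence_regular}) before using $\langle u_i,\psi\rangle$ as a tail bound, a point you use implicitly.
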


Our final two theorems are well-posedness and regularity results that are used in Theorems~\ref{Convergence} and~\ref{Convergence_2}. In Theorem~\ref{existence_regular} we ensure that the system \eqref{syst:reg} has a unique non-negative solution with sufficient regularity. Then, in Theorem \ref{full_system}, we pass to the limit in the regularization to obtain a solution of \eqref{sys.final}.

\begin{theorem} \label{existence_regular}
Assume that the conditions of Theorem~\ref{Convergence} are satisfied. Letting $u^0 \in H^s(\R^d)^n$, for $s > d/2$, be non-negative, the following results hold:
\begin{itemize}
\item[i)] (Local solution)\quad  There exists a time $T= T(\| u^0\|_{H^s(\R^d)})>0$ such that there is a unique non-negative weak solution $\uh^N \in L^\infty(0,T; H^s(\R^d))^n$ of the regularized problem~\eqref{syst:reg} in the time interval $[0,T]$. This solution satisfies
\begin{equation}\label{estim_Hs_1}
 \| \uh^{N}\|_{L^\infty(0,T; H^s(\R^d))} + \| \uh^{N}\|_{L^2(0,T; H^{s+\alpha}(\R^d))}  \leq C 
\end{equation}
and if additionally $s> d/2 + 2$, then we  obtain
\begin{equation}\label{estim_hu_N}
\sup_{(0,T)\times \R^d} | D^2 \uh^N_i (t,x) | \leq C, \quad i = 1, \ldots, n,
\end{equation}
where $C = C(d, \sigma_i, a_{ij}, n)$ is independent of $N$. 
\item[ii)] (Global solution for small initial data) \quad 
Additionally, there exists a constant $\theta = \theta(d, \sigma_i, a_{ij}, n)>0$ such that if
\begin{align}
\label{small_ic}
\| u^0\|_{H^s(\R^d)} \leq \theta(d, \sigma_i, a_{ij},n),
\end{align}
then part $i)$ holds for any $T >0 $.
\end{itemize}
\end{theorem}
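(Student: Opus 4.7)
The plan is to construct the local-in-time solution via Banach's fixed point theorem. I would set
\begin{equation*}
X_{T,R} := \Big\{ v \in C([0,T];H^s(\mathbb{R}^d))^n \cap L^2(0,T;H^{s+\alpha}(\mathbb{R}^d))^n \;:\; v(0)=u^0,\; \|v\|_{[0,T]} \le R \Big\}
\end{equation*}
and, for $v \in X_{T,R}$, define $\Phi(v)=\hat u^N$ as the unique solution of the decoupled linear fractional parabolic problem
\begin{equation*}
\partial_t \hat u_i^N + \sigma_i (-\Delta)^\alpha \hat u_i^N \;=\; \operatorname{div}\Big(\sum_j a_{ij}\, v_i \,\nabla^\beta(v_j * \hat W_N)\Big), \qquad \hat u_i^N(0)=u_i^0,
\end{equation*}
whose solvability in $L^\infty(0,T;H^s) \cap L^2(0,T;H^{s+\alpha})$ follows from standard semigroup theory and maximal regularity for $(-\Delta)^\alpha$. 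The task reduces to showing that $\Phi$ is a self-map and a contraction on $X_{T,R}$ for suitable $R$ and small enough $T$.

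The heart of the proof is an $H^s$ energy estimate that is \emph{uniform in $N$}. Applying $(-\Delta)^{s/2}$, testing with $(-\Delta)^{s/2}\hat u_i^N$, and using integration by parts together with Cauchy--Schwarz yields
\begin{equation*}
\tfrac12 \tfrac{d}{dt}\|\hat u_i^N\|_{H^s}^2 + \sigma_i \|\hat u_i^N\|_{H^{s+\alpha}}^2 \;\le\; \eta\,\|\hat u_i^N\|_{H^{s+\alpha}}^2 + C_\eta \sum_j \big\|v_i \,\nabla^\beta(v_j * \hat W_N)\big\|_{H^{s+1-\alpha}}^2.
\end{equation*}
I would estimate the nonlinear term by the Kato--Ponce (fractional Leibniz) inequality together with the Sobolev embedding $H^s \hookrightarrow L^\infty$ (valid since $s>d/2$), the fact that $\|f * \hat W_N\|_{H^\tau} \le \|f\|_{H^\tau}$ (as $\hat W_N$ is a probability density, hence the constants do not depend on $N$), and the interpolation inequality $\|v\|_{H^{s+\theta}} \lesssim \|v\|_{H^s}^{1-\theta/\alpha}\|v\|_{H^{s+\alpha}}^{\theta/\alpha}$ for $\theta \in [0,\alpha]$. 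A short calculation shows that every resulting term has $\|v\|_{H^{s+\alpha}}$ appearing with exponent $(1-\alpha+\beta)/\alpha$, which is strictly less than $1$ precisely under the assumption $\beta+1 < 2\alpha$. Young's inequality then gives a bound of the form $\|v_i\,\nabla^\beta(v_j * \hat W_N)\|_{H^{s+1-\alpha}}^2 \le \eta\|v\|_{H^{s+\alpha}}^2 + C_\eta(1+\|v\|_{H^s})^q$, which closes the estimate after absorbing into the left-hand side and applying Gronwall. Choosing $R$ slightly larger than $\|u^0\|_{H^s}$ and $T$ small makes $\Phi$ a self-map of $X_{T,R}$; an analogous argument on $\Phi(v_1)-\Phi(v_2)$ in the lower-regularity norm $\|\cdot\|_{[0,T]}$ gives contractivity after possibly further shrinking $T$, yielding the unique solution $\hat u^N \in X_{T,R}$ and the estimate \eqref{estim_Hs_1}. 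The pointwise bound \eqref{estim_hu_N} is then immediate from Sobolev embedding $H^s \hookrightarrow C^2$ for $s>d/2+2$.

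For non-negativity, I would argue probabilistically. Once \eqref{estim_hu_N} is in force, the drift $b_i^N := \sum_j a_{ij} \nabla^\beta(\hat u_j^N * \hat W_N)$ is smooth and bounded on $(0,T)\times\mathbb{R}^d$ thanks to the regularization by $\hat W_N$, so $\partial_t \hat u_i^N + \sigma_i(-\Delta)^\alpha \hat u_i^N + \operatorname{div}(b_i^N \hat u_i^N)=0$ is the forward Kolmogorov equation of an SDE driven by a $2\alpha$-stable L\'evy process, which propagates positivity of $u^0_i$. An alternative is to test with $-(\hat u_i^N)_-$ after smooth approximation of the negative part and use the pointwise C\'ordoba--C\'ordoba inequality for $(-\Delta)^\alpha$. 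For part ii), the same energy estimate, now closed directly on $\hat u^N$, gives
\begin{equation*}
\tfrac{d}{dt}\|\hat u^N(t)\|_{H^s}^2 + \sigma\|\hat u^N(t)\|_{H^{s+\alpha}}^2 \;\lesssim\; \|\hat u^N(t)\|_{H^s}^\gamma \|\hat u^N(t)\|_{H^{s+\alpha}}^2
\end{equation*}
for some $\gamma>0$. If $\theta$ is so small that $C\theta^\gamma < \sigma/2$, the nonlinear term is absorbed by self-diffusion on any maximal existence interval, so $\|\hat u^N(t)\|_{H^s} \le \theta$ is propagated and the local solution extends globally.

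\emph{Main obstacle.} The delicate point is producing the $N$-independent $H^s$ estimate. Since the regularization by $\hat W_N$ only acts on one factor of the cross-diffusion nonlinearity, and since any estimate using its smoothing effect would cost $N$-dependent constants, the bound must be extracted purely from the interplay between self-diffusion of order $2\alpha$ and cross-diffusion of order $\beta+1$. Verifying that in every term produced by Kato--Ponce the interpolation exponent on $\|v\|_{H^{s+\alpha}}$ stays strictly below one — so that Young's inequality allows absorption by $\sigma_i\|\hat u_i^N\|_{H^{s+\alpha}}^2$ — is where the strict inequality $\beta+1<2\alpha$ is essential and where the bookkeeping is the most delicate.
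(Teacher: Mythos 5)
Your overall architecture — Banach fixed point for local existence, an $N$-uniform $H^s$ energy estimate closed by Kato--Ponce plus interpolation using $\beta+1 < 2\alpha$, Sobolev embedding for the $C^2$ bound, and a smallness/absorption argument for global existence — matches the paper's strategy and is sound in outline. The non-negativity argument you sketch (testing with the negative part after smoothing, using the pointwise C\'ordoba--C\'ordoba inequality) is essentially the paper's argument; the probabilistic alternative via the Kolmogorov forward equation would also work and is a genuinely different route. The organizational difference is that the paper does not run the fixed point directly at $H^s$ level: it first obtains a local solution in the lower-regularity ball $L^\infty(0,T;L^2)\cap L^2(0,T;H^\alpha)$ with $N$-dependent constants (Steps~1--2), bootstraps to $H^s$ via difference quotients with still $N$-dependent constants (Step~3), and only then derives the $N$-uniform differential inequality (Step~4). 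Your approach tries to get $N$-independence and $H^s$ regularity simultaneously from the fixed point, which is cleaner if it works but requires more care in defining the solution map and proving it lands in $X_{T,R}$.

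There is, however, a genuine gap in the heart of your argument: the claim that ``a short calculation shows'' every term carries $\|v\|_{H^{s+\alpha}}$ to a power $(1-\alpha+\beta)/\alpha<1$ hides what is in fact the most technically demanding part of the proof. Two specific issues. First, when you apply Kato--Ponce to $\|v_i\,\nabla^\beta(v_j*\hat W_N)\|_{H^{s+1-\alpha}}$, the term $\|v_i\|_{H^{s+1-\alpha}}\,\|\nabla^\beta(v_j*\hat W_N)\|_{L^\infty}$ requires $\nabla^\beta(v_j*\hat W_N)\in L^\infty$ \emph{uniformly in $N$}. This is not controlled by $\|v_j\|_{H^s}$ alone when $s - \beta \leq d/2$ (which is allowed since the theorem only requires $s > d/2$); one must borrow from the dissipation, using that $s+\alpha-\beta > d/2$ because $\alpha > \beta$, which introduces additional interpolation weights and breaks the ``one exponent rules all'' picture. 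Second, the paper's Step~4 shows that closing the estimate at the $H^s$ level actually requires a careful case analysis (the cases $l=0$ and $1\leq l\leq s$, each split into several sub-terms $\rm J_{21},\ldots,J_{33}$ and $\rm J_{11},\ldots,J_{13}$), with bespoke choices of Lebesgue exponents and of the intermediate index $s'<s$ in each case — not a single uniform interpolation. Your proposal would need to carry out this bookkeeping explicitly, and without it, the self-map bound on $X_{T,R}$ with constants independent of $N$ is not established. Once that estimate is available, the rest (contraction in the lower-order norm $\|\cdot\|_{[0,T]}$, small-data global continuation) goes through essentially as you describe and as in the paper.
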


Passing to the limit $N \rightarrow \infty$ in the result of Theorem \ref{existence_regular}, we obtain a solution for the original system \eqref{sys.final}. In particular, we find that

\begin{theorem}\label{full_system}
Under the assumptions of Theorem~\ref{existence_regular}, there exists a unique non-negative solution $u$ of problem~\eqref{sys.final} in $L^\infty(0,T; H^s(\R^d))^n \cap L^2(0,T; H^{s+\alpha}(\R^d))^n$ with 
\begin{equation}\label{converg_hatu_u}
 \lim_{N \to \infty} \|\hat u^N - u\|_{[0,T]}^2 = 0.
\end{equation}
Here $T>0$ corresponds to either the local or global existence interval from Theorem~\ref{existence_regular}.
\end{theorem}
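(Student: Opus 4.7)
The plan is to exploit the uniform-in-$N$ bounds from Theorem~\ref{existence_regular} to extract a limit $u$, verify that $u$ solves~\eqref{sys.final}, and then upgrade the convergence to the natural norm by an energy estimate for $w^N := \hat u^N - u$. From \eqref{estim_Hs_1} the sequence $\hat u^N$ is uniformly bounded in $L^\infty(0,T;H^s(\R^d))^n \cap L^2(0,T;H^{s+\alpha}(\R^d))^n$, and reading off $\partial_t \hat u^N$ from \eqref{syst:reg} one sees that it is bounded in $L^2(0,T;H^{s-2\alpha}(\R^d))^n$ (using $2\alpha > \beta+1$ to bound the cross-diffusion term). An Aubin--Lions argument combined with weak-$\ast$ compactness then yields a subsequence converging to a non-negative limit $u$ weakly-$\ast$ in $L^\infty(0,T;H^s)^n$, weakly in $L^2(0,T;H^{s+\alpha})^n$, and strongly in $L^2(0,T;H^{s'}_{\mathrm{loc}})^n$ for any $s'<s+\alpha$; non-negativity of $u$ is inherited from that of $\hat u^N$.

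To identify the limit in the nonlinear term $\hat u^N_i \nabla^{\beta}(\hat u^N_j * \hat W_N)$, I would split $\hat u^N_j * \hat W_N - u_j = (\hat u^N_j * \hat W_N - \hat u^N_j) + (\hat u^N_j - u_j)$. Since $\hat W_N$ is a mass-one approximate identity concentrating as $\hat\kappa_N \to \infty$, the first piece is $O(\hat \kappa_N^{-1})$ in $L^2(0,T;H^{s-1})$ by the uniform $H^s$ estimate, while the second converges to zero in $L^2_{\mathrm{loc}}$ by the strong compactness above. Applying $\nabla^{\beta} : H^{s} \to H^{s-\beta-1}$ and combining with the pointwise bound~\eqref{estim_hu_N} on $\hat u^N_i$ together with the strong $L^2_{\mathrm{loc}}$ convergence $\hat u^N_i \to u_i$ permits passage to the limit against any compactly supported test function, so that $u$ solves \eqref{sys.final} in the distributional sense; the regularity class is inherited from the weak limit.

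For uniqueness and for the strong convergence \eqref{converg_hatu_u} I would run one and the same energy estimate. Writing the equation for $w^N$ and testing against $w^N$, integration by parts produces on the left $\tfrac{1}{2}\tfrac{d}{dt}\|w^N_i\|_2^2 + \sigma_i \|(-\Delta)^{\alpha/2}w^N_i\|_2^2$ and on the right three kinds of cross-diffusion contributions: $\int w^N_i \, \nabla^{\beta} u_j \cdot \nabla w^N_i$, $\int \hat u^N_i\, \nabla^{\beta} w^N_j \cdot \nabla w^N_i$, and a regularization remainder involving $\hat u^N_j * \hat W_N - \hat u^N_j$. Using the fractional Leibniz rule, Gagliardo--Nirenberg interpolation, and the $W^{2,\infty}$ bound on $\hat u^N_i$ from \eqref{estim_hu_N}, each of the first two terms can be bounded by $\eps\|(-\Delta)^{\alpha/2}w^N_i\|_2^2$ plus a constant times $\|w^N\|_2^2$; here the structural assumption $2\alpha > \beta + 1$ is essential, since it places $\nabla^{\beta}$ and $\nabla$ strictly below $(-\Delta)^{\alpha/2}$ in scaling so that absorption is possible. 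The remainder vanishes at rate $\hat\kappa_N^{-1}$ by the higher $H^s$ regularity of $\hat u^N_j$. Gr\"onwall's inequality then delivers both the uniqueness of $u$ (applied to the difference of two solutions, in which case the remainder is absent) and the convergence $\|\hat u^N - u\|_{[0,T]}^2 \to 0$, which by uniqueness of the limit promotes to the full sequence.

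The most delicate point I expect is the bookkeeping of the cross-diffusion terms in the energy estimate, specifically verifying that the Sobolev exponents balance so that $(-\Delta)^{\alpha/2}w^N$ can absorb every contribution coming from $\nabla^{\beta}$ acting either on $w^N$ itself or on the fixed solution $u$; this is exactly the place where Remark~\ref{dom_self} enters decisively, and although the technique is classical, the simultaneous treatment of the three error types under the sole hypothesis $2\alpha > \beta+1$ requires some care.
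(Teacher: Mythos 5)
Your proposal follows the same overall architecture as the paper's proof: compactness from the uniform estimates of Theorem~\ref{existence_regular}, passage to the limit in the nonlinearity via a split of $\hat u^N_j * \hat W_N - u_j$ into a mollification error and a difference term, and an $L^2$ energy estimate for $w^N = \hat u^N - u$ that delivers both uniqueness (as the mollification-free special case) and the strong convergence \eqref{converg_hatu_u}. The only genuinely different organizational choice is non-negativity: you propose to inherit $u \ge 0$ from $\hat u^N \ge 0$ via the strong $L^2_{\mathrm{loc}}$ convergence (passing to an a.e.\ convergent subsequence), whereas the paper re-proves non-negativity of $u$ directly by testing with $u_i^- = \min\{u_i,0\}$ and invoking the pointwise Kato-type inequality $\langle u_i^-, (-\Delta)^{\alpha}u_i\rangle \ge \|(-\Delta)^{\alpha/2}u_i^-\|_2^2$, and then remarks that the same argument gives $\hat u^N \ge 0$. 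Your inheritance route is perfectly valid and in fact slightly cleaner, but only because Theorem~\ref{existence_regular} already asserts $\hat u^N \ge 0$; if one tries to trace where that sign condition is established, the paper itself defers it to the truncation argument of this theorem, so in a strictly self-contained presentation the truncation argument for one of the two objects is unavoidable. Everything else — the placement of $\partial_t \hat u^N$ in a negative-order space for Aubin--Lions, the decomposition of the cross-diffusion error into three contributions absorbed by $(-\Delta)^{\alpha/2}w^N$ under $2\alpha > \beta+1$, and the $\hat\kappa_N^{-1}$ rate for the mollification remainder via \eqref{converg_convol} — matches the paper's Steps 1, 3, and 4, up to the cosmetic choice of which factor carries $\hat u^N$ and which carries $u$ in the product-rule decomposition.
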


\section{Argument for Theorem \ref{Convergence}} \label{ref_3}

The following lemma, which is taken from \cite{O_89}, is the motivation for many of the assumptions on the convolution kernel $W_1$ and is used in the proofs of the main results. 
\begin{lemma}[Lemma 1 of \cite{O_89}] \label{conv_2} 
Assume that $W_1$ satisfies \eqref{assumption_1}-\eqref{assumption_3} and $W_N$ is given in \eqref{kernel_1}. Then, using the convention $U(\cdot ) = W_N(\cdot) | \cdot  |$ and for any $\varepsilon>0$ and $\tau>0$, we have
   \begin{align}   \label{estim_WX_1}
   \begin{split}
\| S_i^N \ast U \|_2^2  \leq C(d) \big[   \kappa_N^{2\varepsilon - 2} \|S_i^N(\tau) \ast W_N\|^2_{2} +   \langle S_i^N(\tau) , 1\rangle^2 \exp{( - C^{\prime} \kappa_N^{\varepsilon})}\big], 
  \end{split}
 \end{align}
 for $i = 1, \ldots, n$.\\
For $f \in H^1(\R^d)$ we have that
\begin{equation} \label{converg_convol} 
\| f\ast \hat W_N - f \|_2^2  \leq C(d) \hat \kappa_N^{-2} \|\nabla f \|_2^2. 
\end{equation} 
\end{lemma}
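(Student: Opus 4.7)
The plan is to establish the two estimates \eqref{estim_WX_1} and \eqref{converg_convol} by essentially independent arguments: for the first I would decompose the kernel $U = W_N(\cdot)|\cdot|$ along the scale $\kappa_N^{\varepsilon - 1}$ and apply Young's convolution inequality, and for the second I would pass to the Fourier side and exploit the $C^2_b$-regularity together with the normalization $F(W_1)(0) = 1$ coming from $W_1$ being a probability density.

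For \eqref{estim_WX_1}, I would split $U = U_1 + U_2$ with $U_1(x) := U(x)\,\mathbf{1}_{\{|x|\le \kappa_N^{\varepsilon-1}\}}$. On the support of $U_1$ the factor $|x|$ is bounded by $\kappa_N^{\varepsilon-1}$, so $U_1 \le \kappa_N^{\varepsilon-1} W_N$ pointwise and hence
\begin{equation*}
\|S_i^N \ast U_1\|_2 \le \kappa_N^{\varepsilon-1}\,\|S_i^N \ast W_N\|_2,
\end{equation*}
accounting, after squaring, for the first term on the right-hand side of \eqref{estim_WX_1}. For $U_2$, I would use that $S_i^N$ is a positive measure of total mass $\langle S_i^N, 1\rangle$ and apply Young's inequality to obtain $\|S_i^N \ast U_2\|_2 \le \langle S_i^N, 1\rangle \|U_2\|_2$. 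After the change of variables $y = \kappa_N x$ one has
\begin{equation*}
\|U_2\|_2^2 = \kappa_N^{d-2} \int_{|y| > \kappa_N^\varepsilon} |y|^2\, W_1(y)^2\,\dd y,
\end{equation*}
and the assumptions on $W_1$ encoded in \eqref{assumption_1}--\eqref{assumption_3} are designed so that this tail integral is controlled by a constant times $\exp(-C'\kappa_N^\varepsilon)$, with any polynomial prefactors absorbed into the exponential.

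For \eqref{converg_convol}, by Plancherel
\begin{equation*}
\|f \ast \hat W_N - f\|_2^2 \lesssim \int_{\R^d} |F(f)(\xi)|^2 \,|F(\hat W_N)(\xi)-1|^2 \,\dd \xi,
\end{equation*}
so I would focus on the multiplier $F(\hat W_N) - 1$. Since $F(\hat W_N)(\xi) = F(W_1)(\xi/\hat \kappa_N)$, $F(W_1)(0) = 1$ from $\int W_1 = 1$, and $F(W_1)$ is Lipschitz by \eqref{assumption_1}, the bound $|F(W_1)(\eta)-1| \le C\min(|\eta|,1)$ gives $|F(\hat W_N)(\xi)-1|^2 \lesssim \hat\kappa_N^{-2}|\xi|^2$ pointwise. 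Substituting and using $\int |\xi|^2 |F(f)(\xi)|^2 \,\dd\xi \lesssim \|\nabla f\|_2^2$ immediately yields \eqref{converg_convol}.

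The main obstacle is the tail estimate behind \eqref{estim_WX_1}: extracting from the Fourier-side hypotheses on $W_1$ enough spatial decay of $W_1$ (or, equivalently, of $y \mapsto |y|^2 W_1(y)^2$) to produce the exponential factor $\exp(-C'\kappa_N^\varepsilon)$ with the same constant $C'$ as in \eqref{assumption_2}. Once that ingredient is granted, the rest combines a pointwise splitting, Young's convolution inequality, and Plancherel's theorem in a routine way.
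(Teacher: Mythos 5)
The paper does not prove this lemma itself; it simply cites Lemma~1 of \cite{O_89} and adds the remark that ``the proof relies on properties of the Fourier transform and exploits the assumptions \eqref{assumption_1}-\eqref{assumption_3}.'' Your argument for \eqref{converg_convol} is correct and is exactly the Plancherel-plus-Taylor computation one expects (radial symmetry and $F(W_1)(0)=1$ with $F(W_1)\in C^2_b$ give the needed multiplier bound). For \eqref{estim_WX_1}, however, there is a genuine gap --- the very one you flag as ``the main obstacle.'' Your tail estimate requires $\kappa_N^{d-2}\int_{|y|>\kappa_N^\varepsilon}|y|^2W_1(y)^2\,\dd y \lesssim \exp(-C'\kappa_N^\varepsilon)$, i.e.\ exponential \emph{spatial} decay of $W_1$. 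But \eqref{assumption_1}--\eqref{assumption_3} are all conditions on $F(W_1)$: $C^2_b$ regularity only controls finitely many moments of $W_1$, and exponential decay of $F(W_1)$ gives real-analyticity of $W_1$, not pointwise decay. A radially symmetric density behaving like $|x|^{-d-4}$ at infinity while extending analytically to a strip can satisfy all three assumptions, and then your tail integral decays only polynomially in $\kappa_N^{-\varepsilon}$ --- there is no exponential factor to absorb the polynomial prefactor into. (The paper even notes that it has \emph{dropped} an assumption of \cite{O_89,S_2000} on the decay of $W_1$ ``along rays,'' which is precisely the spatial-decay input your physical-space splitting would need.)

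The proof of \eqref{estim_WX_1} therefore has to run on the Fourier side. One route: since $S_i^N\ge 0$, Cauchy--Schwarz in the inner integral gives
\begin{equation*}
\|S_i^N\ast U\|_2^2 \le \int \big(S_i^N\ast (|\cdot|^2 W_N)\big)\,\big(S_i^N\ast W_N\big)\,\dd x \le \big\|S_i^N\ast(|\cdot|^2 W_N)\big\|_2\,\big\|S_i^N\ast W_N\big\|_2,
\end{equation*}
trading $|\cdot|W_N$ for $|\cdot|^2 W_N$. Now $F(|\cdot|^2 W_N)=-\Delta F(W_N)$ and $\Delta_\xi F(W_N)(\xi)=\kappa_N^{-2}(\Delta F(W_1))(\xi/\kappa_N)$, so \eqref{assumption_3} bounds this by $\kappa_N^{-2}(1+|\xi/\kappa_N|^2)|F(W_N)(\xi)|$. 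Splitting the Plancherel integral for $\|S_i^N\ast(|\cdot|^2 W_N)\|_2^2$ at $|\xi|=\kappa_N^{1+\varepsilon}$ (the Fourier mirror of your cutoff at $|x|=\kappa_N^{\varepsilon-1}$) gives $\kappa_N^{4\varepsilon-4}\|S_i^N\ast W_N\|_2^2$ on the low-frequency side, while on the high-frequency side $|F(S_i^N)(\xi)|\le\langle S_i^N,1\rangle$ together with \eqref{assumption_2} produces the exponential tail; a final Young inequality on the resulting product yields \eqref{estim_WX_1}. So your ``near'' half and your choice of scale are sound --- it is only the tail that must be argued through $F(W_1)$, not through $W_1$.
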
 
\noindent 
Since there is no birth or death  in our dynamics, $\langle S_i^N(\tau) , 1\rangle = N_i / N$ for all $\tau \in (0,T]$.

For the proof of Lemma \ref{conv_2} we refer to \cite{O_89}. Here, we only remark that the proof relies on properties of  the Fourier transform and exploits the assumptions \eqref{assumption_1}-\eqref{assumption_3}.

\subsection{Proof of Theorem \ref{Convergence}}

In the proof of Theorem~\ref{Convergence} we follow ideas from \cite[Theorem 1]{O_89} and \cite[Theorem 6.2]{S_2000}. 
%
%
The novelty of our proof lies mainly in technical issues that we encounter due to the form of the nonlocal cross-diffusion terms. Some of these issues can be easily remedied by using the fractional Leibniz rule or the fractional Gagliardo-Nirenberg inequality (see Appendix). The main new contribution is the estimate contained in Step~3.1. While the majority of our proof quite closely follows~\cite[Theorem 6.2]{S_2000}, we give the full argument for the sake of completeness.

\begin{proof}
Our argument proceeds in four steps: 

\noindent \textbf{Step 1: Introduction of a stopping time.}  We introduce a first hitting time $t^N$ such that 
\begin{align}\label{stopping_time}
t^N  = t^N (\omega) := \inf\big\{ \tau>0 \, \big \vert  \, \| h^N - \hat u^N\|^2_{[0,\tau]} (\omega) >\delta_N \big\} \quad \textrm{for} \quad \omega \in \Omega.
\end{align}
Assumptions \eqref{initial_condition_assumption_1} and \eqref{initial_condition_assumption_2}, together with 
the right-continuity of $\| h^N - \hat u^N\|^2_{[0,\tau]}$, for $0\leq \tau \leq T$, ensure that the limit of $t^N$, as $N\to \infty$, is positive a.s.~in~$\Omega$ (see Appendix). 
In addition,  the right-continuity of $\| h^N - \hat u^N\|^2_{[0, \tau]}$, for $0\leq \tau \leq  T$,  yields that $t^N$ is a stopping time and 
\begin{align}
\begin{split}
\label{prob_fact}
\mathbb{P} \big[ \| h^N - \hat u^N\|^2_{[0, t^N\wedge T]}  \geq \delta_N \big] = 
\mathbb{P} \big[\| h^N - \hat u^N\|^2_{[0, T]}  \geq \delta_N \big].
\end{split}
\end{align}
Let $k$ be a multi-index. Using the Cauchy-Schwarz inequality, the definition of $t^N$ and that of $\hat{W}_N$ in \eqref{kernel_1},  and the assumption \eqref{assumption_2} on $W_1$ we obtain
\begin{equation*}
 \begin{aligned}
 &\sup\limits_{x\in \R^d} \big| D^k \big [\hat s^N(t,x) - (\hat{u}^N( t, \cdot) \ast \hat W_N )(x)\big] \big| \\
 &\quad \leq \| h^N(t, \cdot) - \hat u^N( t, \cdot)\|_2 \sup\limits_{x\in \R^d} \|D^k \hat W_N (x - \cdot) \|_2
\leq \sqrt{\delta_N} \hat \kappa_N^{|k|+ \frac{d}{2}}\| D^k W_1\|_{2} \lesssim  \sqrt{\delta_N} \hat \kappa_N^{|k|+ \frac{d}{2}},
 \end{aligned}
\end{equation*}
for $0\leq t\leq t^N$.
By our assumptions on $\hat \kappa_N$ and $\delta_N$, see \eqref{scaling} and \eqref{delta}, we have that
$$
  \sqrt{\delta_N} \hat \kappa_N^{2+ \frac{d}{2}}=   N^{-\frac{\delta}{2}} N^{\frac{\hat \kappa}{d}(2+\frac{d}{2})} \leq 1 \quad \text{ for }  \quad N\geq 1.
$$
Then using the triangle inequality and \eqref{estim_hu_N} of Theorem~\ref{existence_regular} yields
\begin{align}\label{estim_s_N}
\begin{split}
&  \sup_{0\leq t \leq t^N \wedge T} \| \hat s^N(t)\|_{C^2(\R^d) } 
\\&
 \leq  \sup_{0\leq t \leq t^N\wedge T} \Big(   \sum_{|k| \leq 2}\sup\limits_{x\in \R^d} \big|D^k \big[\hat s^N(t,x) - (\hat{u}^N(t, \cdot) \ast \hat W_N )(x)\big] \big| 
 +  \| \hat u^N(t) \|_{C^2(\R^d)}  \Big) \lesssim 1.
\end{split}
\end{align} 
%

\noindent\textbf{Step 2: Deriving an expression for $\| h^N - \uh^N \|_2^2$ .} \quad For $i = 1, \dots, n$, we apply It\^{o}'s formula~\eqref{ito} to compute directly the expressions for $\langle h_i^N , h_i^N  \rangle$ and  $\langle h_i^N , \uh^N_i  \rangle$. The calculations for $\langle h_i^N , h_i^N  \rangle$ and  $\langle h_i^N , \uh^N_i  \rangle$ are similar to those in \cite{O_89, S_2000}, however for completeness we include here the main steps of the derivation of the equation for $\| h^N - \uh^N \|_2^2$. Let $t\in (0, t_N \wedge T]$.
\smallskip 

\noindent \textbf{Step 2.1:} Starting with $\langle h_i^N , h_i^N  \rangle$, by  \eqref{def_densities} we obtain
\begin{align*}
 \langle h_i^N(t, \cdot), h_i^N(t, \cdot) \rangle = \frac{1}{N^2}\sum_{k,\ell=1}^{N_i} V_N\big(X_i^{k,N}(t) -X_i^{\ell,N}(t)\big),
\end{align*}
where $V_N := W_N \ast W_N$. Then we use the equation for $X_i^{k,N} - X_i^{\ell,N}$ obtained from \eqref{micro.sys.new}, that the L\'{e}vy processes $L^k_i$ are i.i.d., and that $\na V_N$  and $D_z V_N$ are odd for any $z \in \R^d$, to write 
\begin{align*}
 & \langle h_i^N(t, \cdot), h_i^N (t, \cdot)\rangle =  \frac{1}{N^2}\sum_{k,\ell=1}^{N_i} V_N\big(X_i^{k,N}(0)-X_i^{\ell,N}(0)\big)\\
  &  - \frac{2}{N^2}\sum_{j=1}^n \sum_{k,\ell=1, k\neq \ell}^{N_i} a_{ij}\int_0^t  \na^{\beta} \hat s_j^N\big(\tau, X_i^{k,N}(\tau)\big) \cdot \na V_N\big(X_i^{k,N}(\tau) - X_i^{\ell,N}(\tau)\big)\, \dd \tau \\
&- \frac{2}{N^2} \sigma_i \sum_{k,\ell=1,k\neq \ell}^{N_i}\int_0^t (-\Delta)^{\alpha} V_N\big(X_i^{k,N}(\tau) - X_i^{\ell,N}(\tau)\big)\, \dd \tau \\
&+\frac{2}{N^2} \sum_{k,\ell=1, k\neq \ell}^{N_i} \sqrt{2 \sigma_i} \int_0^t \int_{\R^d\setminus\{0\}}D_z V_N\big(X_i^{k,N}(\tau_{-}) - X_i^{\ell,N}(\tau_-)\big) \, \Nn_i^k(\dd z \dd\tau).
\end{align*}
\textbf{Step 2.2:}  For $\langle h_i^N, \uh^N_i \rangle$, we use the definition of $h^N_i$ to obtain
\begin{align}\label{hu_1}
\langle h_i^N(t, \cdot), \uh^N_i(t, \cdot) \rangle =  \int_{\mathbb{R}^d} \uh_i^N(t, x ) \frac{1}{N} \sum_{k=1}^{N_i} W_N\big(X_i^{k,N}(t) -x \big) \, \textrm{d}x.
\end{align}
 Making use of the relation 
$$
v(t) \int_0^t g(\tau) \, \dd \tau = \int_0^t \partial_\tau \Big[ v(\tau) \int_0^\tau g(\xi) \, \dd \xi \Big] \, \dd \tau
$$
in conjunction with It\^{o}'s formula, we can write 
\begin{equation}\label{hu_22}
\begin{aligned}
&  \langle h_i^N(t, \cdot), \uh^N_i(t, \cdot) \rangle = \langle h_i^N(0,\cdot), \uh^N_i(t,\cdot) \rangle \\
 &- \frac1N \int_{\R^d}\int_0^t \uh^N_i(\tau,x) \sum_{k=1}^{N_i} \sum_{j=1}^n a_{ij}\na^{\beta}\hat s_j^N\big(\tau, X_i^{k,N}(\tau)\big) \cdot \na W_N\big(X_i^{k,N}(\tau)-x\big)  \, \dd \tau \dd x \\
 &- \frac{\sigma_i}{N} \int_{\R^d}\int_0^t \uh^N_i(\tau,x) \sum_{k=1}^{N_i} (-\Delta)^{\alpha} W_N\big(X_i^{k,N}(\tau)-x\big)\, \dd \tau \dd x \\
 &+ \frac{\sqrt{2\sigma_i}}{N} \int_{\R^d}\int_0^t \uh^N_i(\tau,x)\sum_{k=1}^{N_i}  \int_{\R^d\setminus\{0\}} D_z W_N\big(X_i^{k,N}(\tau_-)-x\big)\, \Nn_i^k(\dd z \dd\tau) \,  \dd x \\
 &+ \frac{1}{N} \int_{\R^d}\int_0^t \pa_{\tau}\uh^N_i(\tau,x)\sum_{k=1}^{N_i} \Big(W_N\big(X_i^{k,N}(\tau)-x\big) - W_N\big(X_i^{k,N}(0)-x\big)\Big)\, \dd\tau \dd x. 
\end{aligned}
\end{equation}
We then use
$$
 \frac{1}{N}  \int_{\R^d} \int_0^t   \pa_{\tau}\uh^N_i(\tau,x)  \, \dd\tau \sum_{k=1}^{N_i} W_N\big(X_i^{k,N}(0)-x\big) \, \dd x 
 = \langle h_i^N(0,\cdot), \hat u_i^N(t,\cdot) -  \uh_i^N(0,\cdot) \rangle
$$
and the system \eqref{syst:reg} for $\uh^N_i$ to rewrite the last term of \eqref{hu_22} as
\begin{align}
\label{new_edit_1}
\begin{aligned}
&   \langle h^N_i(0,\cdot), \uh_i^N(0,\cdot) \rangle
-\langle h_i^N(0,\cdot), \hat u_i^N(t,\cdot)\rangle  - \sigma_i \int_0^t \big\langle (-\Delta)^{\frac \alpha 2}h_i^N(\tau,\cdot), (-\Delta)^{\frac \alpha 2}\uh_i^N (\tau,\cdot)\big\rangle \, \dd\tau\\
&  - \int_0^t\big \langle \na (-\Delta)^{\frac {\alpha-1} 2 } h_i^N(\tau, \cdot), \sum_{j=1}^n a_{ij} (-\Delta)^{\frac {1-\alpha} 2}\big(\uh_i^N(\tau,\cdot) \na^{\beta} (\uh_j^N*\hat W_N)(\tau,\cdot)\big)\big\rangle \,  \dd\tau.
\end{aligned}
\end{align}
Notice that in the above computation we have used \eqref{integration_by_parts} from the Appendix. 
Plugging the identity \eqref{new_edit_1} into \eqref{hu_22} implies 
\begin{align*}
 &  \langle h_i^N(t, \cdot), \uh^N_i(t,\cdot) \rangle = \langle h^N_i(0,\cdot), \uh_i^N(0,\cdot) \rangle  - \int_0^t \big\langle S_i^N(\tau), \sum_{j=1}^n a_{ij}\na^{\beta} \hat s_j^N(\tau,\cdot)\cdot \na (\uh^N_i*W_N) (\tau,\cdot)\big\rangle \, \dd\tau \\
 &\quad -\sigma_i\int_0^t \big\langle S_i^N(\tau),  \FL(\uh^N_i*W_N) (\tau,\cdot) \big\rangle \, \dd\tau - \sigma_i \int_0^t \big\langle (-\Delta)^{\frac \alpha 2}h_i^N(\tau,\cdot), (-\Delta)^{\frac \alpha 2}\uh_i^N (\tau,\cdot)\big\rangle \, \dd\tau \\
 &\quad+\frac{\sqrt{2\sigma_i}}{N} \sum_{k=1}^{N_i}\int_0^t \int_{\R^d\setminus\{0\}} D_z\big(\uh^N_i*W_N\big)\big(\tau, X_i^{k,N}(\tau_{-})\big)\, \Nn_i^k( \dd z \dd\tau) 
 \\
 &\quad- \int_0^t\big \langle \na (-\Delta)^{\frac {\alpha-1} 2 } h_i^N(\tau, \cdot), \sum_{j=1}^n a_{ij} (-\Delta)^{\frac {1-\alpha} 2}\big(\uh_i^N(\tau,\cdot) \na^{\beta} (\uh_j^N*\hat W_N)(\tau,\cdot)\big)\big\rangle \, \dd\tau .
\end{align*}
%
Considering $\uh^N_i $ as a test function in  \eqref{syst:reg} and integrating by parts  yields the equation for $\langle \uh_i^N(t,\cdot), \uh_i^N(t,\cdot)\rangle$.

\smallskip

\noindent\textbf{Step 2.3:} Combining the previous calculations, we obtain
\begin{align}
 & \|h^N(t,\cdot)-\uh^N(t,\cdot)\|_2^2
 =  \|h^N(0,\cdot)-\uh^N(0,\cdot)\|_2^2 \tag{\RomanNumeralCaps{1}} \label{term1}\\
 &- \sum_{i,j=1}^n 2 a_{ij}\int_0^t \left\langle S_i^N(\tau), \na^{\beta}\hat s^N_j(\tau,\cdot)\cdot \na\Big( \big(h_i^N- \uh_i^N\big)*W_N(\tau,\cdot) \Big)  \right\rangle \, \dd\tau \tag{\RomanNumeralCaps{2}} \label{term2} \\
 &+ \sum_{i,j=1}^n 2a_{ij}\int_0^t \left\langle \na (-\Delta)^{\frac {\alpha-1} 2 }\big(h_i^N(\tau,\cdot) - \uh_i^N(\tau,\cdot)\big), \right. \tag{\RomanNumeralCaps{3}} \label{term4} \\
 & \hspace{4.5cm} \left.  (-\Delta)^{\frac{1- \alpha} 2}\Big( \uh_i^N  (\tau,\cdot) \na^{\beta}\big(\uh_j^N*\hat W_N (\tau,\cdot)\big) \Big)\right\rangle \, \dd\tau  \nonumber  \\
  &- \sum_{i=1}^n 2\sigma_i\int_0^t \left\langle S_i^N(\tau), \FL \Big(\big(h_i^N- \uh_i^N\big)*W_N(\tau,\cdot)\Big) \right\rangle \, \dd\tau \tag{\RomanNumeralCaps{4}} \label{term3} \\
 &+ \sum_{i=1}^n2\sigma_i \int_0^t \left\langle (-\Delta)^{\frac \alpha 2}\big(h_i^N(\tau,\cdot) - \uh_i^N(\tau,\cdot)\big), (-\Delta)^{\frac \alpha 2} \uh_i^N (\tau,\cdot) \right \rangle \, \dd\tau  \tag{\RomanNumeralCaps{5}} \label{term5}   \\
 &+ \sum_{i=1}^n\frac 2 N \FL V_N(0) \int_0^t \big\langle S_i^N(\tau), \sigma_i \big\rangle \, d\tau  + \sum_{i=1}^n M^N_i(t)\tag{\RomanNumeralCaps{6})+(\RomanNumeralCaps{7}} \label{term6}.
\end{align}
Here, we have used the notation 
\begin{align*}
& M^N_i(t) :=    \frac{\sqrt{8 \sigma_i}}{N}    \sum_{k=1}^{N_i}    \int_0^t  \int_{\R^d \setminus \zero}  \hspace{-0.1 cm } D_{z}\big( \big[ \big(h^N_i(\tau_-, \cdot) - \uh^N_i (\tau, \cdot)\big)  \ast W_N \big] \big( X_i^{k,N}(\tau_{-})  \big) \big) \,  \Nn_i^k (\dd z \dd \tau).
\end{align*}

\noindent \textbf{Step 3: Estimates for terms \eqref{term2}-{\rm(\RomanNumeralCaps{7})}.}

\smallskip

\noindent\textbf{Step 3.1: Terms \eqref{term2} + \eqref{term4}.}  We write \eqref{term4} = (\RomanNumeralCaps{3}.1) + (\RomanNumeralCaps{3}.2) + (\RomanNumeralCaps{3}.3), where 
\begin{align*}
(\textrm{\RomanNumeralCaps{3}}.1)& = \hspace{-0.2 cm} \sum_{i,j=1}^n  2a_{ij} \int_0^t\left \langle  \na (-\Delta)^{\frac {\alpha-1} 2} \left(h_i^N(\tau)-\uh^N_i(\tau)\right), \right.\\
 & \hspace{4.2cm} \left.(-\Delta)^{\frac{1- \alpha} 2} \Big(\uh^N_i(\tau) \na^{\beta}\big( \uh^N_j (\tau) \ast \hat{W}_N  - \hat s_j^N(\tau) \big)\Big)\right \rangle \dd\tau, \\
(\textrm{\RomanNumeralCaps{3}}.2) & =  \hspace{-0.2 cm}\sum_{i,j=1}^n 2a_{ij}\int_0^t \Big \langle \nabla (-\Delta)^{\frac{\alpha-1} 2} \big(h_i^N (\tau) - \uh^N_i(\tau)\big), 
(-\Delta)^{ \frac{1- \alpha} 2}  \Big[\big(\uh^N_i (\tau) - h_i^N(\tau)\big)\na^{\beta}\hat s_j^N(\tau) \Big]\Big \rangle  \dd\tau,  \\
(\textrm{\RomanNumeralCaps{3}}.3) & = \hspace{-0.2 cm} \sum_{i,j=1}^n 2a_{ij}\int_0^{t} \left \langle \nabla  (-\Delta)^{\frac {\alpha-1} 2 }  \big(h_i^N (\tau) - \uh^N_i(\tau)\big),  (-\Delta)^{\frac{1- \alpha} 2} \big( h_i^N(\tau)\na^{\beta}\hat s_j^N(\tau) \big)\right \rangle \dd\tau. 
\end{align*}
Then we obtain
\begin{align*} 
\begin{split}
& |\textrm{(\ref{term2})} + \textrm{(\RomanNumeralCaps{3}.3)}|  
=  \Big| \sum_{i,j=1}^n  2a_{ij} \int_0^t \int_{\R^d} \Big\langle S_i^N(\tau),    (-\Delta)^{\frac{1- \alpha} 2}R^N_j  (\tau, \cdot,y)     \nabla  (-\Delta)^{\frac {\alpha-1} 2 }  G_i^N(\tau,y) \Big\rangle  \dd y \dd\tau\Big| \\
& \lesssim  \sum_{i,j=1}^n \int_0^t \Big[ C_{\varsigma} \hspace{-0.05 cm}  \int_{\R^d} \hspace{-0.05 cm} \Big| \frac{1}{N} \sum_{k=1}^{N_i}  (-\Delta)^{\frac{1-\alpha}2} R^N_j\big(\tau, X^{k,N}_i(\tau),y\big) \Big|^2 \dd y 
 + \varsigma \big \| (-\Delta)^{\frac{\alpha}2} \big( h_i^N(\tau) -\uh^N_i(\tau)\big)\big\|^2_{2} \Big]   \dd \tau,  
\end{split}
\end{align*}
where $G_i^N(\tau,y)= h_i^N(\tau,y) -\uh^N_i(\tau,y)$ and $R^N_j(\tau, x,y) =W_N(x-y) \big(\nabla^{\beta} \hat s_j^N(\tau,x) -\nabla^{\beta} \hat s_j^N(\tau,y)  \big)$.  
For $\tau \in (0,t)$ and arbitrary $\varepsilon>0$, we process the first term on the right-hand side of the last inequality using Parseval's identity as
$$
\begin{aligned}
& \int_{\R^d}  \Big| \frac{1}{N} \sum_{k=1}^{N_i} (-\Delta)^{\frac{1-\alpha}2} R^N_j\big(\tau, X^{k,N}_i(\tau),y\big) \Big|^2 \dd y  
 = 
  \int_{\R^d}  \hspace{-0.1 cm }  |\xi|^{2(1-\alpha)} \Big| F\Big(\frac{1}{N} \sum_{k=1}^{N_i}  R^N_j\big(\tau, X^{k,N}_i(\tau),\cdot\big) \Big) (\xi) \Big|^2 \dd \xi   
\\
& \quad  \leq 
\int_{|\xi|\leq \kappa_N^{1+\varepsilon}}  \hspace{-0.1 cm } |\xi|^{2(1-\alpha)} \Big| F\Big(\frac{1}{N} \sum_{k=1}^{N_i}  R^N_j\big(\tau, X^{k,N}_i(\tau),\cdot\big) \Big) (\xi)  \Big|^2 \dd\xi  
\\ 
& \qquad + 
\int_{|\xi| > \kappa_N^{1+\varepsilon}}   \hspace{-0.1 cm }  |\xi|^{2(1-\alpha)} \Big| F\Big(\frac{1}{N} \sum_{k=1}^{N_i}  R^N_j\big(\tau, X^{k,N}_i(\tau),\cdot\big) \Big) (\xi) \Big|^2 \dd \xi =: I_1 + I_2. 
\end{aligned}
$$
Similar to \cite{O_89,S_2000}, we treat  $I_1$ using Parseval's identity, the bound  \eqref{estim_s_N}, and \eqref{estim_WX_1} of Lemma~\ref{conv_2}:
\begin{align*} 
I_1 
& \leq     \kappa_N^{2 (1-\alpha)(1+ \varepsilon)}  \| \hat s_j^N(\tau) \|^2_{C^2} \int_{\R^d}\Big( \frac 1 N  \sum_{k=1}^{N_i}   W_N\big(X^{k,N}_i (\tau)-y\big)\big|X^{k,N}_i (\tau) - y\big| \Big)^2  \dd y\\
& \lesssim   \kappa_N^{2 (1-\alpha)(1+ \varepsilon)}  \kappa_N^{2\varepsilon - 2}   \|S_i^N(\tau) \ast W_N\|^2_{2} +   \Big(\frac{N_i}{N}\Big)^2 \exp{( - C^{\prime} \kappa_N^{\varepsilon})}.
\end{align*} 
%
Due to the fractional derivative, we need to use a different approach than in \cite{O_89,S_2000} to handle~$I_2$. For this we first split it into two parts:
\begin{align*} 
\begin{split}
I_2 \leq&  \int_{|\xi| > \kappa_N^{1+\varepsilon}} |\xi|^{2(1-\alpha)} \Big| F\Big( \frac 1 N \sum_{k=1}^{N_i}  W_N(X^{k,N}_i(\tau)-\cdot) \nabla^{\beta} \hat s_j^N(\tau,X^{k,N}_i(\tau))\Big) (\xi)\Big|^2 \dd \xi
\\
&  + 
 \int_{|\xi| > \kappa_N^{1+\varepsilon}} |\xi|^{2(1-\alpha)} \Big| F\Big( \frac 1 N \sum_{k=1}^{N_i}  W_N(X^{k,N}_i(\tau)-\cdot)
\nabla^{\beta} \hat s_j^N(\tau,\cdot)  \Big)(\xi) \Big|^2 \dd\xi =: J_1 + J_2.
\end{split}
\end{align*}
The term $J_1$ can be treated using standard properties of the Fourier transform, Jensen's inequality for sums,  assumption \eqref{assumption_2}, and  estimate \eqref{estim_s_N}. In particular, we find that
\begin{align*}
\begin{split}
&J_1   =   \int_{|\xi| > \kappa_N^{1+\varepsilon}} |\xi|^{2(1-\alpha)} \Big|
\frac 1N  \sum_{k=1}^{N_i}  \nabla^{\beta}  \hat s_j^N(\tau, X^{k,N}_i(\tau)) F\big(W_N(X^{k,N}_i(\tau)-\cdot)\big)(\xi)\Big|^2 \dd \xi 
\\ 
&\leq  \|  \hat s_j^N (\tau) \|^2_{C^1}  \frac{N_i}{N} \int_{|\xi| > \kappa_N^{1+\varepsilon}} |\xi|^{2(1-\alpha)}   \frac 1N  \sum_{k=1}^{N_i} 
\Big| F\big( \delta_{X^{k,N}_i(\tau) } \ast W_N \big)(\xi)\Big|^2 \dd \xi  
\\ 
& \lesssim  \Big( \frac{N_i}{N} \Big)^2  \int_{|\xi^{\prime}| > \kappa_N^{\varepsilon}} |\xi^\prime|^{2(1-\alpha)}  \kappa_N^{2(1-\alpha)+d}  \exp{(-2 C^\prime |\xi^\prime|} ) \, \dd \xi^\prime \lesssim \Big( \frac{N_i}{N} \Big)^2  \exp{(-  C^{\prime} \kappa_N^{\varepsilon}} ) .
\end{split}
\end{align*}
To treat $J_2$, we once more split it into a near-field and far-field contribution, but now corresponding to the integral coming from an additional convolution that turns up as
\begin{align*}
& F\Big( \frac 1 N \sum_{k=1}^{N_i}  W_N(X^{k,N}_i(\tau)-\cdot)
\nabla^{\beta} \hat s_j^N(\tau,\cdot)  \Big) (\xi) \\
& = \int_{\R^d}  F\Big( \frac 1 N \sum_{k=1}^N  W_N(X^{k,N}_i(\tau)-\cdot)\Big)(\xi - \eta)
 F \big(\nabla^{\beta} \hat s_j^N(\tau, \cdot)  \big) (\eta) \, \dd\eta .
\end{align*}
Applying the triangle inequality then yields
\begin{align*}
\begin{split}
J_2& \leq   \int_{|\xi| > \kappa_N^{1+\varepsilon}}   |\xi|^{2(1-\alpha)}  \Big| \int_{|\eta|\leq \kappa_N^{1+\varepsilon}}   F\big(S_i^N (\tau)\ast W_N\big)(\xi - \eta) F \big(\nabla^{\beta} \hat s_j^N(\tau)  \big) (\eta)  \, \dd\eta \Big|^2 \dd\xi 
\\
&  + 
 \int_{|\xi| > \kappa_N^{1+\varepsilon}}  |\xi|^{2(1-\alpha)} \Big| \int_{|\eta|> \kappa_N^{1+\varepsilon} }    F\big( S_i^N (\tau)\ast W_N \big)(\xi - \eta)
 F \big(\nabla^{\beta} \hat s_j^N(\tau)  \big) (\eta)  \, \dd\eta \Big|^2 \dd\xi  =: K_1 + K_2.
\end{split}
\end{align*}
The term $K_1$ can be estimated using the  properties of the Fourier transform  along with the assumption \eqref{assumption_2} and another application of Jensen's inequality for sums. We additionally make use of $| \xi - \eta| + | \eta | \geq |\xi |$ for $\xi, \eta \in \R^d$. Using these tools yields
\begin{align*}
K_1 
& \leq \Big( \frac{N_i}{N}\Big)^2 \int_{|\xi| > \kappa_N^{1+\varepsilon}}    |\xi|^{2(1-\alpha)}  \Big | \int_{|\eta|\leq  \kappa_N^{1+\varepsilon}}     |F\big(  W_N \big)(\xi - \eta)| | F \big(\nabla^{\beta} \hat s_j^N(\tau)  \big) (\eta)|  \, \dd \eta \Big|^2 \dd\xi  
\\ 
&\leq  \Big( \frac{N_i}{N}\Big)^2  \int_{|\xi| > \kappa_N^{1+\varepsilon}}  |\xi|^{2(1-\alpha)}    
  \\& \hspace{1.5cm} \times 
\Big|  \int_{|\eta|\leq  \kappa_N^{1+\varepsilon}} |\eta|^{\beta}  |F\big(  W_N \big)(\xi - \eta)\big|| F (S^N_j)(\eta) F(W_N)(\eta) F (\hat W_N) (\eta) \big| \dd\eta \Big|^2  \dd\xi
\\
& 
\leq  \Big( \frac{N_j}{N}\Big)^2  \Big( \frac{N_i}{N}\Big)^2 \int_{|\xi| > \kappa_N^{1+\varepsilon}}  |\xi|^{2(1-\alpha)}   \kappa_N^{2 \beta(1+\varepsilon)} \\
& \hspace{1.5cm} \times  \Big | \int_{|\eta|\leq  \kappa_N^{1+\varepsilon}}  \Big| F\big(  W_1 \big)\Big( \frac{\xi - \eta}{\kappa_N} \Big)\Big|
\Big| F(W_1)\Big(\frac{\eta}{\kappa_N} \Big)\Big| \Big| F (W_1) \Big(\frac{\eta}{\hat \kappa_N} \Big) \Big| \, \dd\eta \Big|^2  \dd\xi  
\\
&  \lesssim  \Big( \frac{N_j}{N}\Big)^2  \Big( \frac{N_i}{N}\Big)^2   \int_{|\xi| > \kappa_N^{1+\varepsilon}}  |\xi|^{2(1-\alpha)}   \kappa_N^{2\beta(1+\varepsilon)}  \Big | \int_{|\eta|\leq  \kappa_N^{1+\varepsilon}} 
\exp{\Big(- C^\prime \Big( \frac{|\xi|}{\kappa_N} + \frac{|\eta|}{\hat\kappa_N}\Big)\Big)}
\, \dd\eta  \Big|^2 \dd \xi 
\\
&  \lesssim \Big( \frac{N_j}{N}\Big)^2  \Big( \frac{N_i}{N}\Big)^2   \int_{|\xi^\prime| > \kappa_N^{\varepsilon}} \hspace{-0.2 cm} |\xi^\prime|^{2(1-\alpha)} \kappa_N^{2\beta(1+\varepsilon)} \kappa_N^{3d + 2(1- \alpha)} \hspace{-0.05 cm }   
\exp{\big(-2 C^\prime |\xi^\prime| \big)}  \dd\xi^\prime\\
 &\lesssim \Big( \frac{N_j}{N}\Big)^2  \Big( \frac{N_i}{N}\Big)^2 \exp{\big(-  C^{\prime} \kappa_N^{\varepsilon}\big)}  .
\end{align*}
Using similar methods as above, we write
\begin{align*}
K_2 
& \leq  \Big( \frac{N_j}{N}\Big)^2   \Big( \frac{N_i}{N}\Big)^2  \int_{|\xi^\prime| > \kappa_N^{\varepsilon}}   |\xi^{\prime}|^{2(1-\alpha)}   \kappa_N^{2(1-\alpha) +  2\beta +  3d }  \exp{\big(- 2 C^\prime |\xi^{\prime}| \big) }  \dd\xi^{\prime}  \\
& \hspace{2 cm} \times \Big |  \int_{|\eta^\prime|>  \kappa_N^{\varepsilon}} |\eta^\prime|^{\beta}
 \exp{\Big( -C^\prime |\eta^\prime| \frac{{\kappa_N}}{\hat\kappa_N} \Big)}  \dd\eta^\prime \Big |^2  \lesssim  \Big( \frac{N_j}{N}\Big)^2\Big( \frac{N_i}{N}\Big)^2 \exp{ \big( -  C^{\prime} \kappa_N^{\varepsilon} \big)}.
\end{align*}
Here $\xi^{\prime} = \xi/ \kappa_N$ and $\eta^{\prime} = \eta/ \kappa_N$.
Compiling the above estimates, we find that
\begin{align}\label{edit_2_1}
\begin{split}
  & \int_{\R^d}\Big| \frac{1}{N} \sum_{k=1}^{N_i}  (-\Delta)^{\frac{1-\alpha}2} R^N_j\big(\tau, X^{k,N}_i(\tau),y\big) \Big|^2 \dd y  
  \\& \quad   \quad \quad   
  \lesssim   \kappa_N^{2(1- \alpha) (1 + \varepsilon)} \kappa_N^{2\eps-2} \|h_i^N(\tau, \cdot)\|^2_{2} +   \Big[ \Big( \frac{N_j}{N}\Big)^2 +1 \Big] \Big( \frac{N_i}{N}  \Big)^2  \exp{( - C^{\prime} \kappa_N^{\varepsilon})}.
\end{split}
\end{align}
Summing in \eqref{edit_2_1} over $i, j = 1,\ldots,n$   and  using \eqref{estim_Hs_1} of Theorem~\ref{existence_regular} yields
\begin{align*}
\begin{split}
 | \textrm{(\ref{term2})} + \textrm{(\RomanNumeralCaps{3}.3)}| & \le  \varsigma \int_0^t  \big\| (- \Delta)^{\frac{\alpha}{2}} \big( h^N(\tau,\cdot) -   \uh^N(\tau,\cdot)\big)\big\|^2_{2} \,\dd\tau
\\
 &\quad + C_\varsigma \Big[ \kappa_N^{2(1- \alpha) (1 + \varepsilon)}  \kappa_N^{2\varepsilon -2} \int_0^t \Big(\|  (h^N - \uh^N)(\tau,\cdot) \|^2_2 +1  \Big)  \dd \tau \\
 & \qquad \qquad +  \sum_{i,j = 1}^n \Big[ \Big( \frac{N_j}{N}\Big)^2 +1 \Big]  \Big( \frac{N_i}{N}  \Big)^2  \exp{( - C^{\prime} \kappa_N^{\varepsilon})} t \Big].
\end{split}
\end{align*}
To estimate \textrm{(\RomanNumeralCaps{3}.1)},  we use \eqref{equivalence}, \eqref{frac_Leib}, and \eqref{GN_form} of the Appendix and \eqref{estim_Hs_1} of Theorem \ref{existence_regular}:
\begin{align*}
\begin{split}
 | \textrm{(\RomanNumeralCaps{3}.1)} |
 &  \le \varsigma \int_0^t \big\|(-\Delta)^{\frac{\alpha}{2}} \big(\uh^N(\tau ,\cdot) - h^N(\tau, \cdot)\big) \big\|^2_{2} \, \dd\tau 
 \\&\quad 
 + C_\varsigma \int_0^t  \|  \uh^N(\tau, \cdot) \|^2_{H^{s+1 - \alpha}}    \big \| \na^{\beta} \big(\uh^N (\tau, \cdot) - h^N(\tau, \cdot)\big) \big  \|^2_{H^{1-\alpha}}    \dd \tau
\\ & \le \int_0^t  \big( C_{\varsigma^{\prime}} 
 \|\uh^N(\tau, \cdot)- h^N(\tau, \cdot)\|_{2}^2 + \varsigma^{\prime} \|(-\Delta)^{\frac{\alpha}{2}}(\uh^N(\tau, \cdot) - h^N(\tau, \cdot))\|_{2}^2 \big) \, \dd\tau, 
\end{split}
\end{align*}
for any $\varsigma$ and $\varsigma^{\prime}>0$. Notice that we have used $0 < 1- \alpha + \beta < \alpha$. Our treatment of \textrm{(\RomanNumeralCaps{3}.2)} follows along the same lines, but we replace the use of \eqref{frac_Leib} by that of \eqref{frac_Leib_2} and \eqref{estim_Hs_1} by \eqref{estim_s_N}: 
\begin{align*}
|\textrm{(\RomanNumeralCaps{3}.2)}| \le
   \int_0^t \Big(C_\varsigma  \|\uh^N(\tau, \cdot) - h^N(\tau, \cdot)\|_{2}^2 + \varsigma \big\|(-\Delta)^{\frac \alpha 2}\big(\uh^N(\tau, \cdot) -h^N(\tau, \cdot)\big)\big\|_{2}^2 \Big) \dd\tau,
\end{align*}
for any $\varsigma>0$ and where we have used that $1-\alpha + \beta <2$ to apply \eqref{estim_s_N}.
\smallskip

\noindent \textbf{Step 3.2:  Terms \ \eqref{term3},  \eqref{term5}, and~{\rm(\RomanNumeralCaps{6})}.} The sum of the terms (\ref{term3}) and (\ref{term5}) satisfies
\begin{align*}
\textrm{(\ref{term3})} + \textrm{ (\ref{term5})} \lesssim - \int_0^t\big\| (- \Delta)^{\frac \alpha 2} \big( \uh^N (\tau,\cdot) - h^N(\tau,\cdot)\big) \big \|_{2}^2 \, \dd\tau.
\end{align*}
For~{\rm(\RomanNumeralCaps{6}}), using that $\langle S_i^N, \sigma_i \rangle \lesssim N_i/N$, we find that 
\begin{align*}
|{\rm(\RomanNumeralCaps{6})}| \lesssim \frac{1}{N} \sum_{i=1}^n \frac{N_i}{N} \kappa_N^{d+2\alpha} t.
\end{align*}
 \textbf{Step 3.3: Compilation of the estimates. } Combining the estimates from Steps 3.1~and~3.2 and choosing $\varsigma, \varsigma^{\prime}>0$ small enough, we obtain for $0 < \tilde{T} \leq T$:
\begin{equation}\label{estim_sup_norm}
\begin{aligned}
 &\hspace{-.2cm} \sup_{0\leq t\leq \tilde T \wedge t_N} \|h^N(t,\cdot)-\uh^N(t,\cdot)\|_{2}^2   + \int_0^{\tilde T \wedge t_N}   \big\| (-\Delta)^{\frac\alpha 2} ( h^N  -    \hat{u}^N)(\tau,\cdot)\big\|^2_{2}\,  \dd \tau \\
 &\;  \lesssim   \|h^N(0,\cdot)-\uh^N(0,\cdot)\|_{2}^2 +
 \int_0^{\tilde T \wedge t_N}  \sup_{0\leq\xi \leq \tau} \| h^N(\xi,\cdot) - \uh^N(\xi,\cdot)\|_{2}^2  \,  \dd\tau  \\
 & \;\; + \kappa_N^{4 \varepsilon - 2\alpha (1 + \varepsilon)}  \int_0^{\tilde T \wedge t_N} \hspace{-0.1 cm } \Big(  \sup_{0\leq\xi \leq \tau}\|  h^N (\xi,\cdot) - \uh^N(\xi,\cdot) \|^2_2 + 1 \Big)  \dd \tau  + \frac{  \kappa_N^{d+2\alpha}}{N} \sum_{i=1}^n  \frac{N_i}{N}  \tilde T 
 \\
 & \; \; 
    +   \sum_{i,j=1}^n  \Big[\Big(\frac{N_j}{N}\Big)^2 +1 \Big] \Big( \frac{N_i}{N}  \Big)^2 \exp{( - C^{\prime} \kappa_N^{\varepsilon})} \tilde T+    \sum_{i=1}^n \sup_{0\leq t\leq \tilde T \wedge t_N}  \big|M_i^N(t)\big|.
\end{aligned}
\end{equation}

\noindent \textbf{Step 3.4: Estimate for the martingale term~{\rm(\RomanNumeralCaps{7})}. }  First notice that 
\begin{align}
\label{Jensens_2}
 \mathbb{E}  \Big[  \sum_{i=1}^n \sup_{0\leq t \leq  \tilde{T} \wedge t^N }  \big|M^N_i (t)\big|  \, \Big | \mathcal{F}_0  \Big]^2
 \lesssim \sum_{i=1}^n \mathbb{E}  \Big[ \sup_{0\leq t \leq  \tilde{T} \wedge t^N }  \big|M^N_i (t)\big| \,  \Big | \mathcal{F}_0  \Big]^2,
\end{align}
since the $L^k_i$ are i.i.d. To treat the right-hand side, we begin by noting that, due to the optional sampling theorem, the stopped process $M^N_i(t \wedge t^N)$ is a martingale.  We can then  apply Jensen's inequality and Doob's $L^p$- martingale inequality and use the mutual independence of the $L^{k}_i$  to write
\begin{equation} \label{martingale_1} 
\begin{aligned}
 & \mathbb{E}  \Big[  \sup_{0 \leq t \leq  \tilde{T} \wedge t^N }  \big|M^N_i (t)\big|\,   \Big | \mathcal{F}_0  \Big]^2  \leq
 \mathbb{E}  \Big[   \sup_{0\leq t \leq  \tilde{T}}  \big|M^N_i (t \wedge t^N )\big|^2 \,  \Big|  \mathcal{F}_0   \Big] \leq 4\mathbb{E}  \Big[  \big|M^N_i (\tilde{T} \wedge t^N)\big|^2\,   \Big|  \mathcal{F}_0  \Big]
 \\
& \lesssim   \frac{ 1}{N}  \mathbb{E}  \Big[ \frac{ 1}{N}  \sum_{k=1}^{N_i}   \Big|  \int_0^{\tilde{T} \wedge t^N}\hspace{-0.2 cm }  \int_{\R^d \setminus \zero} \hspace{-0.1 cm} D_{z}\big( \big[ G^N_i(\tau_-, \tau, \cdot)  \ast W_N \big] ( X_i^{k,N}(\tau_{-})  \big) \big) \Nn_i^k (\dd z \dd \tau) \Big|^2 \, \Big|  \mathcal{F}_0  \Big], 
\end{aligned} 
\end{equation}
where $G^N_i(\tau_-, \tau, x) = h^N_i(\tau_-, x) - \uh^N_i (\tau, x)$. 
We continue by using the It\^{o} isometry (see \cite[Chapter 4]{A_2004}), in conjunction with the observation that the jump-set of a L\'{e}vy process is a Lebesgue null set, which means that within the time integral we may replace the left limit $h^N_i(\tau_-, \cdot)$ by $h^N_i(\tau, \cdot)$. Finishing-off the estimate with an application of Jensen's inequality with respect to the measure determined by the density $W_N$, we obtain
\begin{equation*}
\begin{aligned}
& \mathbb{E}  \Big[  \sup_{0\leq t \leq  \tilde{T} \wedge t^N }  \big|M^N_i (t)\big| \,  \Big|  \mathcal{F}_0  \Big]^2\\
& \lesssim 
\frac{ 1}{N}   \mathbb{E}  \Big[    \int_0^{\tilde{T} \wedge t^N} 
 \Big\langle S^N_i(\tau, \cdot),  \int_{\R^d \setminus \zero} \big| D_{z}(h^N_i(\tau, \cdot) - \uh^N_i (\tau, \cdot) ) \big|^2 \,  \dd \nu(z)  \ast W_N  \Big \rangle  \,  \dd \tau \, \Big|   \mathcal{F}_0  \Big]\\
 & = \frac{ 1 }{N}    \mathbb{E}  \Big[ \int_0^{\tilde{T} \wedge t^N} 
 \Big\langle h^N_i(\tau, \cdot), \int_{\R^d \setminus \zero} \big| D_{z}(h^N_i(\tau, \cdot) - \uh^N_i (\tau, \cdot) )  \big|^2 \,  \dd \nu(z)  \Big \rangle  \, \dd \tau  \, \Big|   \mathcal{F}_0  \Big].
\end{aligned}
\end{equation*}
The additional observation that 
\begin{align*}
\|h_i^N\|_{L^\infty(0,\tilde{T} \wedge t^N; L^\infty(\R^d))} \leq \frac{N_i}{N}  \kappa_N^d,
\end{align*}
the definition of the fractional Sobolev seminorm (see the Appendix), and  \eqref{equivalence} yield 
\begin{align*}
\begin{split}
 & \mathbb{E}  \Big[ \sum_{i=1}^n  \sup_{0\leq t \leq  \tilde{T} \wedge t^N }   \big|M^N_i (t)\big|\,   \Big| \mathcal{F}_0  \Big]^2 
  \lesssim    \frac{\kappa_N^d  }{N}   \sum_{i=1}^n  \mathbb{E}  \Big[  \frac{N_i}{N}   \int_0^{\tilde{T} \wedge t^N}  \hspace{-0.25 cm } \big \|  (- \Delta)^{\frac{\alpha}{2}} (h^N_i(\tau, \cdot) - \uh^N_i (\tau, \cdot))  \big\|_2^2   \,  \dd \tau \,  \Big|  \mathcal{F}_0  \Big].
 \end{split}
\end{align*}

\smallskip

\noindent \textbf{Step 4: Conclusion.} We now assume that there exists $n_1 \in \mathbb{N}$ such that 
\begin{align}
\label{assume}
\mathbb{P} \Big[\sum_{i=1}^n\frac{N_i}{N} \geq n_1 \Big] =0.
\end{align}
Then, taking the conditional expectation in \eqref{estim_sup_norm}, setting $\varepsilon= (2\alpha -1)/ (4 - 2\alpha)$, and in the martingale term using $a \leq a^2 \kappa_N^2 + \kappa_{N}^{-2}$ for $a \geq 0$, we obtain
\begin{equation} \label{final_equation}
\begin{aligned}
& \mathbb{E} \Big[  \sup_{0\leq t\leq \tilde T \wedge t^N}    \|h^N(t,\cdot)-\uh^N(t,\cdot)\|_2^2  + \int_0^{\tilde T \wedge t^N} \hspace{-0.2 cm} \big\| (- \Delta )^{\frac{\alpha}{2}} (h^N - \uh^N)( \tau, \cdot) \big\|_2^2 \, \textrm{d} \tau \, \Big | \mathcal{F}_0  \Big]  \\
& \; \;  \lesssim \|h^N(0,\cdot)-\uh^N(0,\cdot)\|_2^2 
 + \int_0^{\tilde T} \mathbb{E}  \Big[ \sup_{0\leq \xi \leq \tau \wedge t^N}  \hspace{-0.1 cm } \| h^N (\xi, \cdot) - \uh^N(\xi, \cdot) \|_2^2  \dd \tau  \, \Big | \mathcal{F}_0 \Big]    \\
&  \;   + n_1^4 ( \kappa_N^{2\alpha -3} + \kappa_N^{-1} )\tilde T +  \kappa_N^{-2} 
+n_1 \frac{ \kappa_N^{d+2} }{N} \mathbb{E} \Big[ \int_0^{ \tilde T \wedge t^N}  \big\| (- \Delta)^{\frac{\alpha}{2}} (  h^N  - \uh^N)(\tau, \cdot)\big\|_2^2 \, \dd \tau \, \Big|  \mathcal{F}_0  \Big]. 
\end{aligned}
\end{equation}
Notice that in the transition from \eqref{estim_sup_norm} to \eqref{final_equation}, we have used the upper bound on $\kappa$ included in \eqref{scaling}. 
Using the assumptions on $\kappa$ given in \eqref{scaling}, we can for $N \gg 1$ absorb the last term on the right-hand side of \eqref{final_equation} into the left-hand side to obtain
\begin{align*}
\zeta(\tilde{T})=\mathbb{E} \Big[  \| h^N - \uh^N \|^2_{[0, \tilde{T} \wedge t^N]} \, \big \vert  \mathcal{F}_0\Big] \lesssim \|h^N(0,\cdot)-\uh(0,\cdot)\|_2^2  +  \kappa_N^{2\alpha -3}  + \kappa_N^{-1} + \kappa_N^{-2} +  \int_0^{\tilde T} \zeta(\tau) \, \dd \tau  , 
\end{align*}
for $0< \tilde{T} \leq T$ with $T \in (0, T_1]$, where $T_1 = 1/ n_1^4$.  Then an application of Gr\"onwall's inequality yields 
\begin{equation*}
  \mathbb{P} \Big[ \zeta(T) \geq 2 C e^{\tilde{C}T} \delta_N^{1 + \rho} \Big ]
   \leq  \mathbb{P} \Big[  \|h^N(0,\cdot)-\uh^N(0,\cdot)\|_2^2  + \kappa_N^{-1}   \geq  2  \delta_N^{1 + \rho}  \Big]
 < \sigma(N),
\end{equation*}
where $\tilde{C} = \tilde{C}(d, n, \sigma_i, a_{ij})$ and $C = C(d, n, \sigma_i, a_{ij})$ are positive constants and $\sigma(N) \rightarrow 0$ as $N\rightarrow \infty$ by \eqref{initial_condition_assumption_2} and the lower bound on $\kappa$ from \eqref{scaling}.
To finish, similar to \cite{S_2000}, we define
\begin{align*}
\widetilde \Omega := \left\{\omega \in \Omega \, \Big \vert \,\mathbb{E} \left[ \|h^N - \hat u^N \|^2_{[0,T \wedge t^N]}  \, \big \vert \, \mathcal{F}_0 \right] (\omega)  <  2 C e^{\tilde{C}T}\delta_N^{1 + \rho}  \right\} 
\end{align*}
and, by applying Markov's inequality,  then find that 
\begin{align*}
&\mathbb{P}\left[  \|h^N - \hat u^N \|_{[0,T \wedge t^N]} \geq \delta_N \right] \leq \int_{\Omega} \mathbb{P}\big[  \|h^N - \hat u^N \|_{[0, T \wedge t^N]} \geq \delta_N \, \big| \mathcal{F}_0 \big]  \textrm{d} \mathbb{P}\\
 & \quad \leq \mathbb{P}( \widetilde  \Omega^c ) + \delta_N^{-1}  \int_{\widetilde \Omega} \mathbb{E}\Big[  \|h^N - \hat u^N \|_{[0,T \wedge t^N]}  \,  \Big | \mathcal{F}_0 \Big]  \textrm{d} \mathbb{P} \leq \sigma(N) +  2 C e^{ \tilde{C} T} \delta_N^{\rho}  \rightarrow 0 \; \textrm{ for } \; N \rightarrow \infty. 
\end{align*}
This completes our argument thanks to \eqref{prob_fact}. We can then repeat our arguments on the intervals $[T_1, 2T_1]$, $[2 T_1, 3 T_1]$, and so on, in order to obtain the result for any $T >0$. Now we can replace \eqref{assume} by \eqref{initial_condition_assumption_1}.
\end{proof}

\section{Argument for Theorem~\ref{Convergence_2}}
\label{ref_4}
Recall that $\psi$ is the function from the assumptions \eqref{assum_init_2} on the initial data in Theorem \ref{Convergence_2}. Throughout our proof of Theorem~\ref{Convergence_2}, we make use  of the following elementary relations for $\psi$.

\begin{lemma} \label{psi_new} Let $\psi(x) = \log(2 + |x|^2)$ and $\alpha \in (1/2,1)$. For all $ x\in \R^d$, the following relations hold:
$$
\begin{aligned}
&|(-\Delta)^{\alpha} \psi(x)| \lesssim_{\alpha} \psi (x),  \quad  && |\nabla^2 \psi(x)| \lesssim \psi(x),  \quad && |\nabla \psi(x)| \lesssim \psi(x),\\
&  |(-\Delta)^{\alpha} \psi^2(x)| \lesssim_{\alpha} \psi^2 (x), \quad &&  |\nabla \psi^2(x)| \lesssim \psi^2(x), \quad  &&  |\nabla^2 \psi^2(x)| \lesssim \psi^2(x).
\end{aligned}
$$
\end{lemma}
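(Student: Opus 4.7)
\medskip

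\noindent\textit{Proof proposal.} The plan is to handle the six estimates by splitting them into two groups: the classical derivative bounds, which reduce to one-line computations, and the two fractional Laplacian bounds, which require a near-field/far-field decomposition of the singular integral.

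\medskip

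First I would handle the local derivative bounds directly. Writing $\nabla \psi(x) = 2x/(2+|x|^2)$ gives $|\nabla\psi(x)| \leq 1$, and a direct differentiation shows that $\nabla^2\psi$ is uniformly bounded on $\R^d$. Since $\psi(x) \geq \log 2 > 0$, both estimates are dominated by $\psi(x)$. For $\psi^2$ I would use the product rule: $\nabla \psi^2 = 2\psi \nabla\psi$ and $\nabla^2 \psi^2 = 2\,\nabla\psi\otimes\nabla\psi + 2\psi\,\nabla^2\psi$, whence $|\nabla\psi^2|\lesssim \psi$ and $|\nabla^2\psi^2|\lesssim 1+\psi$. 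Using $\psi \geq \log 2$ once more, both quantities are $\lesssim \psi^2(x)$.

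\medskip

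For $(-\Delta)^\alpha\psi$, I would use the singular integral representation from \eqref{frac.L} and split the integration domain into $\{|y-x|\leq 1\}$ and $\{|y-x|> 1\}$. On the near-field, I would apply a second-order Taylor expansion of $\psi$ at $x$; the linear term vanishes by the symmetry built into the principal value, and the quadratic term is bounded using the pointwise inequality $|\nabla^2\psi(\xi)|\lesssim 1$, producing a contribution of order $1 \lesssim \psi(x)$. On the far-field, I would separate $\psi(x)-\psi(y)$ into the two pieces. The $\psi(x)$ piece integrates to $C\psi(x)$ against the integrable tail $|y-x|^{-d-2\alpha}\chi_{|y-x|>1}$. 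The decisive ingredient for the $\psi(y)$ piece is the elementary inequality
\begin{equation*}
\log(2+|y|^2)\;\leq\;\log(2+|x|^2)+\log(2+|y-x|^2),
\end{equation*}
which follows from $(2+|x|^2)(2+|y-x|^2)\geq 2+|y|^2$, itself a consequence of $|y|^2\leq 2|x|^2+2|y-x|^2$. This reduces the remaining term to $\psi(x)$ times a constant plus $\int_{|y-x|>1}\log(2+|y-x|^2)|y-x|^{-d-2\alpha}\,\dd y$, which is finite since $2\alpha>0$, and absorbable into $\psi(x)$ via $\psi\geq \log 2$.

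\medskip

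For $(-\Delta)^\alpha \psi^2$, the structure is identical. The near-field part now requires a bound for $|\nabla^2\psi^2(\xi)|$ with $|\xi-x|\leq 1$; I would control $\psi(\xi)$ by $\psi(x)$ using $\log(2+(|x|+1)^2)\leq \log 2 + \psi(x)$, so that $|\nabla^2\psi^2(\xi)|\lesssim \psi(x) \lesssim \psi^2(x)$. For the far-field I would square the above logarithmic inequality to obtain $\psi^2(y)\leq 2\psi^2(x)+2\log^2(2+|y-x|^2)$, and then argue exactly as before, noting that $\log^2(2+|y-x|^2)|y-x|^{-d-2\alpha}$ is integrable on $\{|y-x|>1\}$. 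The main (mild) obstacle is arranging the far-field splitting so that the $y$-dependent logarithm decouples cleanly from $\psi(x)$; the key inequality above is exactly what makes this work, and is the only non-routine input in the entire argument.
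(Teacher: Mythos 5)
Your proposal is correct and follows essentially the same near-field/far-field decomposition of the singular integral that the paper uses, with the same Taylor-expansion treatment of the near-field and the same key inequality $\psi(y)\lesssim\psi(x)+\psi(y-x)$ driving the far-field bound. The only difference is cosmetic: you derive that inequality directly from $(2+|x|^2)(2+|y-x|^2)\geq 2+|y|^2$ (giving it with constant $1$), whereas the paper obtains it through the doubling bound $\psi(2x)\lesssim\psi(x)$ and a case split, and you close the far-field via integrability of $\log(2+r^2)r^{-1-2\alpha}$ rather than the paper's sub-power bound $\psi(x)\lesssim|x|^{\alpha/2}$; both are valid.
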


\begin{proof} The second,  third, fifth and sixth relations follow from  simple computations. 
For the first relation we split the integral in the definition of the fractional Laplacian into two contributions:
\begin{align}
\label{near_far_field}
\begin{split}
 (-\Delta)^{\alpha} \psi(x) & =\lim_{\epsilon \rightarrow0}  \int_{B_{1}(x) \setminus B_{\epsilon}(x)} \frac{\psi(x) - \psi(y)}{|x - y|^{d + 2\alpha}} \dd y  +  \int_{\R^d \setminus B_{1}(x)} \frac{\psi(x) - \psi(y)}{|x - y|^{d + 2\alpha}} \dd y.
\end{split}
\end{align}
Then, for the first term on the right-hand side we write 
\begin{align*}
\Big| \int_{B_{1}(x) \setminus B_{\epsilon}(x)} \frac{\psi(x) - \psi(y)}{|x - y|^{d + 2\alpha}} \dd y \Big|& 
 \leq  \int_{B_{1}(x)  \setminus B_{\epsilon}(x)} \frac{|\psi(x) - \psi(y)- \nabla \psi(x) \cdot (x-y)|}{|x - y|^{d + 2\alpha}} \dd y\\
&  \leq  \int_{B_{1}(x)} \frac{\|\nabla^2 \psi\|_{\infty}}{|x - y|^{d + 2\alpha -2}} \dd y \lesssim 1.
\end{align*}
For the second term of \eqref{near_far_field},
using $\psi(y) \lesssim \psi(x) + \psi(x-y)$ and $\psi(x)\lesssim |x|^{\alpha /2}$ for $|x| \geq 1$, we write
\begin{align}
\label{correct_arxiv_1}
\begin{split}
 \int_{\R^d \setminus B_{1}(x) } \frac{\psi(y)}{|x - y|^{d + 2\alpha}} \dd y
 \lesssim \psi(x) + \int_{\R^d \setminus B_1(x) } \frac{|x-y|^{\alpha/2}}{|x-y|^{d + 2\alpha}} \dd y \lesssim \psi(x).
\end{split}
\end{align}
Notice that the relation $\psi(y) \lesssim \psi(x) + \psi(x-y)$ follows from the observation that 
\begin{align}
 \label{calculation}
\psi(2 x) = \log(2 + |2 x|^2) \leq \log(4) + \log(2+ |x|^2) \lesssim \psi(x).
\end{align}
In particular, if $|x| \geq |y|/2$, then $\psi(y) \lesssim \psi(2x) + \psi(x-y)$ and \eqref{calculation} can be applied. Likewise, if $|x| \leq |y|/2$, then $2 |y - x| > 2| |y| - |x| | \geq |y|$ and this gives $\psi(y) \leq \psi(x) + \psi(2 (y - x ))$.

For the fourth relation, we use exactly the same argument as for the first.
\end{proof}

\subsection{Proof of Theorem \ref{Convergence_2}}  Our proof follows the arguments in \cite[Theorem 2]{O_89} and \cite[Theorem 6.3]{S_2000} with adaptions made to take into account the L\'{e}vy noise and fractional cross-diffusion. We follow quite closely the proof of Theorem 6.3 in \cite{S_2000},
however, since our setting requires various simple modifications, we include the full argument for completeness. 

\begin{proof}[Proof of Theorem \ref{Convergence_2}]
Let $f \in \mathcal{B}_1$, where 
$$
\mathcal{B}_1 := \left\{ f \in C^1_b(\R^d)  \, \;  | \, \;  \| f\|_{\infty} + \| \nabla f \|_{\infty} \leq 1 \right\}.
$$
We decompose $f =  f_R + \hat{f}_R$, where $\textrm{supp}(f_R) \subseteq B_R$ and $\textrm{supp}(\hat{f}_R) \subseteq \R^d \setminus B_{R-2}$, for $R>2$.  For any $t> 0$ and $\psi(x) = \log(2 + |x|^2)$, we obtain
\begin{equation} \label{thm6.3_1} 
 \begin{aligned}
 \left| \langle S_i^N(t)- u_i(t, \cdot), f \rangle \right| & \lesssim R^{\frac d 2}   \big( \| h_i^N(t)- \hat{u}_i^N(t, \cdot) \|_{2}+   \| \hat{u}_i^N(t, \cdot) - u_i(t, \cdot)\|_{2} \big) \\
  & \quad +  \frac{1}{\psi(R)} \langle S_i^N(t) + u_i(t, \cdot), \psi \rangle + \kappa_N^{-1} \langle S_i^N(t) , 1 \rangle.
    \end{aligned}
\end{equation}
For the details of this estimate we point the reader to (61) in \cite[Theorem 6.3]{S_2000}. Here we require the positivity of $u_i$,  which is shown in Theorem~\ref{existence_regular}. By \eqref{thm6.3_1},  using the stopping time $t^N$ defined in \eqref{stopping_time} and the convergence results for $\| h^N(t, \cdot)- \hat{u}^N(t, \cdot) \|_2$ and $\| \hat{u}^N(t, \cdot) - u(t, \cdot)\|_2$, shown in  Theorems~\ref{Convergence}~and~\ref{full_system} respectively,  it suffices to show
\begin{align}
\label{conclusion_thm_3}
\lim_{R\rightarrow \infty} \lim_{N \rightarrow \infty } \mathbb{P} \Big[   \sum_{i=1}^n \sup_{0\leq t \leq T}  \langle S_i^N(t \wedge t^N)  + u_i(t \wedge t^N, \cdot), \psi  \rangle  \psi^{-1}(R) \geq  \mu  \Big] = 0,
\end{align}
for any $\mu>0$. To obtain \eqref{conclusion_thm_3} we consider equation~\eqref{ito} for $t\wedge t^N$.  Using that $|\nabla \psi| \lesssim \psi$  and  $| (-\Delta)^{\alpha}\psi| \lesssim \psi$, see Lemma~\ref{psi_new}, together with the regularity of $\hat{s}^N_j$, we  obtain  
\begin{align}
\label{ito_lemma_3_2}
\begin{split}
 \langle  S_i^N(t \wedge t^N) , \psi  \rangle  \lesssim  \langle  S_i^N(0) , \psi  \rangle +  \int_0^{t}  \langle  S_i^N(\tau  \wedge t^N), \psi \rangle \dd \tau +  \big| M_i^{N,1}(t \wedge t^N) \big|,
\end{split}
\end{align}
where 
\begin{align*}
M^{N,1}_i(t) :=  \frac{1}{N} \sum_{k=1}^{N_i} \int_{0}^{t \wedge t^N} \int_{\R^d \setminus \left\{ 0  \right\} } D_z  \psi (X^{k,N}_i(\tau_-) )  \tilde{\mathcal{N}}_i^k( \dd z \dd \tau ).
\end{align*}
An application of Gr\"onwall's inequality to \eqref{ito_lemma_3_2} gives that 
\begin{align}
\label{ito_lemma_3_2_final}
\sup_{0\leq t \leq T}  \langle S_i^N(t \wedge t^N) , \psi  \rangle  \lesssim_T \langle  S_i^N(0) , \psi  \rangle +  \sup_{0\leq t \leq T} \big| M^{N,1}_i(t\wedge t^N)\big|;
\end{align}
the analogue of \eqref{ito_lemma_3_2_final} in \cite{S_2000} is the estimate following (65).

We estimate the martingale $M^{N,1}_i(t)$ using similar methods as in the proof of Theorem \ref{Convergence}. In particular, we use the independence of the L\'{e}vy processes and apply the optional sampling theorem, Doob's $L^p$-inequality, and the It\^{o} isometry to write
\begin{equation}\label{martingale_M_2}
\begin{aligned}
&\mathbb{E} \Big[ \sup_{0\leq t \leq T} |M^{N,1}_i (t \wedge t^N)| \Big| \mathcal{F}_0\Big]^2 
 \lesssim  \frac{1}{N}\mathbb{E}  \Big [ \int_0^{\tilde{T}\wedge t^N}  \hspace{-0.2 cm }  \Big\langle S^N_i(\tau, \cdot), \int_{\R^d \setminus \zero} | D_z \psi |^2  \dd \nu(z) \Big\rangle   \dd  \tau \big |  \mathcal{F}_0  \Big].
\end{aligned}
\end{equation}
To continue we emulate the argument from Lemma \ref{psi_new} and obtain
\begin{align*}
\begin{split}
\int_{\R^d \setminus \zero} \big| D_z \psi (x)|^2  \dd \nu(z) & \lesssim  \int_{B_1(0)} \frac{ \| \nabla  \psi \|_{\infty}^2 }{|z|^{d+2\alpha -2}} \dd z + \int_{\R^d \setminus B_1(0)} \frac{ \psi^2 (x) +  \psi^2 (z + x)}{|z|^{d+2\alpha}} \dd z\\
& \lesssim 1+ \psi^2(x)+  \int_{\R^d \setminus B_1(0)} \frac{\psi^2 (z ) + \psi^2(x)}{|z|^{d+2\alpha}} \dd z \lesssim  \psi^2(x),
\end{split}
\end{align*}
where we have used that $\psi^2(x) \lesssim |x|^{\alpha}$ for $|x|\geq 1$. This estimate is then combined with \eqref{martingale_M_2}.
To handle the resulting right-hand side, we again use It\^{o}'s formula, now with $\psi^2$,  in conjunction with $|\nabla \psi^2| \lesssim \psi^2$ and $|(- \Delta)^{\alpha} \psi^2| \lesssim \psi^2$ from Lemma \ref{psi_new}. We find that 
\begin{align}
\label{ito_lemma_3_2_1}
\begin{split}
\langle  S_i^N(t \wedge t^N) , \psi^2  \rangle \lesssim  \langle  S_i^N(0) , \psi^2 \rangle + \int_0^{t}  \langle  S_i^N(\tau  \wedge t^N), \psi^2 \rangle \dd \tau +  M^{N,2}_i(t \wedge t^N),
\end{split}
\end{align}
for $i=1, \ldots, n$, where $M^{N,2}_i$ are martingales with $M^{N,2}_i(0) = 0$. Taking the conditional expectation of \eqref{ito_lemma_3_2_1} and applying Gr\"{o}nwall's inequality yields
\begin{align}
\label{ito_lemma_3_3}
\begin{split}
\sup_{0\leq t \leq T} \mathbb{E} \big[ \langle  S_i^N(t \wedge t^N) , \psi^2  \rangle \big | \mathcal{F}_0   \big] \lesssim_T \langle  S_i^N(0) , \psi^2 \rangle;
\end{split}
\end{align}
the analogue of this estimate in \cite{S_2000} is (63). After an application of the Fubini theorem this allows us to bound the right-hand side of \eqref{martingale_M_2} by $\langle  S_i^N(0) , \psi^2 \rangle$, up to a multiplicative constant depending on $T$. 

To finish, we now take the conditional expectation of \eqref{ito_lemma_3_2_final} to obtain
\begin{align}
\label{final_thm3_1}
\mathbb{E}\big[   \sup_{0\leq t \leq T} \langle  S_i^N(t \wedge t^N) , \psi \rangle \big | \mathcal{F}_0 \big] \lesssim_T  \langle S_i^N (0), \psi^2  \rangle +1.
\end{align}
Similar estimates, now using the weak formulation of \eqref{sys.final} instead of the It\^{o} formula, ensure 
\begin{align}
\label{final_thm3_2}
\sup_{0\leq t \leq T} \langle  u_i(t, \cdot), \psi \rangle \lesssim_T \langle u_{i}^0, \psi \rangle,
\end{align}
where we know that the right-hand side is finite due to our assumption on $u_{i}^0$ in  \eqref{assum_init_2}. 

To conclude the proof of Theorem~\ref{Convergence_2}, we combine \eqref{conclusion_thm_3}, \eqref{final_thm3_1}, and \eqref{final_thm3_2}, together with the assumptions on the initial condition given in  \eqref{assum_init_2}. 
\end{proof}

\section{Proof of Theorem~\ref{existence_regular}} 
\label{ref_5}
 

\begin{definition} 
A weak solution of \eqref{syst:reg} is $\uh^N \in L^2(0,T; H^\alpha(\R^d))^n \cap L^{\infty}(0,T;L^2(\R^d))^n$ with $\partial_t \uh^N \in L^2(0,T; H^\alpha(\R^d)^\prime)^n$ that satisfies the system \eqref{syst:reg}  in the variational form 
\begin{equation} \label{def_sol:sys.reg}
\begin{aligned}
&\int_0^T \big\langle \partial_t \uh_i^N,  \psi_i \big \rangle_{(H^{\alpha})^\prime, H^\alpha } \, \dd t  +\int_0^T \sigma_i\big\langle  (-\Delta)^{\frac\alpha 2}   \uh_i^N, (-\Delta)^{\frac\alpha 2}  \psi_i \big\rangle \, \dd t\\
& +
\sum_{j=1}^n \int_0^T a_{ij}\big\langle   (-\Delta)^{\frac{1-\alpha} 2} \big( \hat u_i^N \nabla^\beta (\uh_j^N \ast \hat W_N)\big), \nabla (-\Delta)^{\frac{\alpha-1} 2} \psi_i \big\rangle \,  \dd t= 0,  
\end{aligned} 
\end{equation} 
for  $\psi_i \in L^2(0,T; H^\alpha(\R^d))$, where $i=1, \ldots, n$. The initial condition is satisfied in the $L^2$-sense. 
\end{definition} 

\noindent Weak solutions of \eqref{sys.final} are defined in the analogous way. Here,  $\langle  \phi, \psi  \rangle_{ (H^{\alpha})^\prime, H^\alpha } $ denotes the dual pairing between $\phi \in L^2(0,T; H^\alpha(\R^d)^\prime)$ and $\psi \in L^2(0,T; H^\alpha(\R^d))$.

\begin{proof} [Proof of Theorem~\ref{existence_regular}] This proof proceeds in five steps. In the first step, we use a Galerkin argument to prove the  existence of a weak solution for a linearization of the regularized system~\eqref{syst:reg}.  
In the second step, we transition from the linearized problem to the system~\eqref{syst:reg} using a Banach fixed-point argument. In Steps 3 and 4, we prove~\eqref{estim_Hs_1} and~\eqref{estim_hu_N} for local solutions of  \eqref{syst:reg}. In Step 5 we show that for small enough initial data, we can construct a global solution that also satisfies the estimates \eqref{estim_Hs_1} and \eqref{estim_hu_N}. 
\smallskip 

\noindent \textbf{Step 1: Existence of  a local weak solution for a linearization of \eqref{syst:reg}.} We first consider the following linearized version of \eqref{syst:reg}  
\begin{equation} \label{lin.sys.reg}
\begin{aligned}
&\partial_t \uh_i^N + \sigma_i (-\Delta)^\alpha  \uh_i^N - \diver\Big(\sum_{j=1}^n a_{ij} v_i^N \nabla^\beta (\uh_j^N \ast \hat W_N) \Big)= 0  \; && \; \text{ in } \;  (0,T)\times \R^d,  \\
& \uh_i^N(0) = u^0_{i} &&\;   \text{ in } \; \R^d,
\end{aligned} 
\end{equation} 
for a given $v^N  \in L^2(0, T;  H^{\alpha} (\R^d))^n\cap L^\infty(0, T; L^2(\R^d))^n$ and $i=1, \ldots, n$. To show existence of a solution of  \eqref{lin.sys.reg} we take a Galerkin approximation $\{\hat u^{N,k}\}_{k \in \mathbb N}$ with
\begin{equation}\label{Galerkin}
\uh_i^{N,k}(t,x)= \sum_{l=1}^k \rho_{il}^{N,k}(t) q_l(x), 
\end{equation}
where the span of the elements $\{ q_l\}_{l\in \mathbb N} $ is dense in $H^\alpha(\R^d)$ and they are pairwise orthonormal in $L^2(\R^d)$,  satisfying 
\begin{equation} \label{def_sol:Galerkin}
\begin{aligned}
& \int_{\R^d} \Big[ \partial_t \uh_i^{N,k} q_l + \sigma_i  (-\Delta)^{\frac\alpha 2}   \uh_i^{N,k}  (-\Delta)^{\frac\alpha 2}  q_l \Big] \, \dd x   \\
&+ \int_{\R^d} 
 \sum_{j=1}^n  a_{ij}  (-\Delta)^{\frac{1-\alpha} 2} \big(  v_i^N \nabla^\beta (\uh_j^{N,k} \ast \hat W_N)\big) \nabla (-\Delta)^{\frac{\alpha-1} 2} q_l \, \dd x = 0, \quad \text{for } l \in \mathbb{N}.
\end{aligned} 
\end{equation} 
We remark that by \eqref{frac_Leib_2}, since $ \nabla^{\beta} \uh_j^{N,k} (t)\ast \hat W_N \in W^{1,\infty}(\R^d)$ and $v_i^N \in L^2(0,T; H^{\alpha}(\R^d))$,  the expression $(-\Delta)^{\frac{1-\alpha} 2} (  v_i^N (t)\nabla^\beta (\uh_j^{N,k}(t) \ast \hat W_N)) \in L^2(\R^d)$ is well-defined. Now, by standard ODE theory, there exist unique  $\rho_{il}^{N,k} \in H^1(0,T)$ such that $\uh_i^{N,k}$, defined by \eqref{Galerkin}, are solutions of \eqref{def_sol:Galerkin} with $\uh_i^{N,k}(0) = u_{i}^{0,k}$, where $u_{i}^{0,k}$ are the projections of $u^0_{i}$ onto ${\rm Span}\{q_1, \ldots, q_k\}$.  


We now derive \textit{a priori} estimates that are uniform in $k \in \mathbb{N}$. Considering $\uh_i^{N,k}$ as a test function in  \eqref{def_sol:Galerkin}, integrating with respect to the time variable, summing over $i= 1, \dots, n$, and using Young's inequality we obtain 
\begin{align}\label{energy_1}
\begin{aligned}
& \sum_{i=1}^n \int_0^\tau  \frac{d}{dt} \int_{\R^d}  |\uh_i^{N,k}|^2\dd x \dd t + \sum_{i=1}^n 2 \sigma_i \int_0^\tau \int_{\R^d}  |(-\Delta)^{\frac \alpha 2} \uh_i^{N,k}|^2 \dd x  \dd t  \\
& \; \leq   \sum_{i,j=1}^n \int_0^\tau \int_{\R^d}  \Big[C_\varsigma \big|(-\Delta)^{\frac{1-\alpha}2} \big(v^N_i\big(\nabla^\beta\uh_j^{N,k} * \hat W_N\big)\big)\big|^2   +  \varsigma   \big|(-\Delta)^{\frac \alpha 2} \uh_i^{N,k}\big|^2 \Big] \dd x  \dd t,
\end{aligned}
\end{align}
for any $\tau \in (0, T]$. Notice that here we have used  equivalence \eqref{equivalence} from the appendix. Using~\eqref{frac_Leib_2} and the Gagliardo-Nirenberg interpolation inequality,  we obtain 
\begin{equation*}
\begin{aligned}
 &\big\| (-\Delta)^{\frac{1-\alpha}2} \big(v_i^N (t, \cdot)  \nabla^\beta\uh_j^{N,k} (t, \cdot) * \hat W_N \big) \big\|_{2}   \lesssim  \| v_i^N(t, \cdot)\|_{H^{1-\alpha }}
 \|  \uh_j^{N,k}(t, \cdot) \ast \nabla^{\beta} \hat W_N \|_{ W^{1,\infty}}  
 \\
 & \lesssim \| \hat W_N \|_{H^{1+\beta}}  \| v_i^N(t, \cdot)\|_{ H^{1-\alpha }} \|  \uh_j^{N,k}(t, \cdot) \|_{2} \lesssim_N  \| v_i^N(t, \cdot)\|^{\frac{1-\alpha}{\alpha}}_{H^{\alpha}} \| v_i^N(t, \cdot)\|_{2}^{\frac{2 \alpha -1} {\alpha}} \|  \uh_j^{N,k}(t, \cdot) \|_{2},  
 \end{aligned}
\end{equation*}
for $t \in (0,\tau]$. 
 Combining the previous estimate with \eqref{energy_1} and using Gr\"onwall's lemma along with an application of H\"{o}lder's inequality (in the time integral), we obtain 
\begin{equation} \label{Gron_apply_1}
\begin{aligned}
 \sup_{t \in(0, T]}\|\uh^{N,k}(t, \cdot)\|_{2}^2 & \lesssim_{N} \| u_0\|^2_2 \exp\big( \|v^N \|^2_{L^{\infty}(0,T; L^2(\R^d))} \|v^N \|^{\frac{2(1 - \alpha)}{ 2\alpha - 1}}_{L^{2}(0,T;H^{\alpha}(\R^d))} C(N) T \big),   \\
  \| \uh^{N,k} \|^{2}_{L^2(0,T; H^{\alpha}(\R^d))} & \lesssim_{N}  \| u_0\|^2_2 \Big( 1+  \, T \, \|v^N \|^2_{L^{\infty}(0,T; L^2(\R^d))} \|v^N \|^{\frac{2(1 - \alpha)}{ 2\alpha - 1}}_{L^{2}(0,T;H^{\alpha}(\R^d)) }  \Big)  \\
 & \qquad  \times   \exp\big( \|v^N \|^2_{L^{\infty}(0,T; L^2(\R^d))} \|v^N \|^{\frac{2(1 - \alpha)}{ 2\alpha - 1}}_{L^{2}(0,T;H^{\alpha}(\R^d)) } C(N) T \big).
 \end{aligned}
\end{equation}
By \eqref{Gron_apply_1}, it follows directly from  \eqref{def_sol:Galerkin} that
$$
\| \partial_t  \uh^{N,k}\|_{L^2(0, T; H^{\alpha}(\R^d)^\prime)} \leq C(\| u_0\|_2, \|v^N \|_{L^{\infty}(0,T; L^2(\R^d))} ,\|v^N \|_{L^{2}(0,T;H^{\alpha}(\R^d))}, N).
$$
Since the constants above are independent of $k$, we pass to a weakly convergent subsequence 
\begin{equation} \label{convergences}
 \uh^{N,k}   \rightharpoonup^\ast  \hat{u}^N   \;  \text{ in }  L^\infty(0,T; L^2(\R^d))^n \; \text{ and } \;   \uh^{N,k}   \rightharpoonup  \hat{u}^N   \, \text{ in }  L^2(0,T; H^\alpha(\R^d))^n \text{ as } k \to \infty. 
 \end{equation} 
Integrating \eqref{def_sol:Galerkin} in time and passing $k \rightarrow \infty$ yields  $\hat{u}^N \in L^\infty(0,T; L^2(\R^d))^n \cap L^2(0,T; H^{\alpha}(\R^d))^n$ as a weak  solution of \eqref{lin.sys.reg} with $\partial_t \hat{u}^N \in L^2(0,T, H^{\alpha}(\R^d)^{\prime})^n$. In order to pass to the limit in the third term of \eqref{def_sol:Galerkin}, we write 
\begin{align*}
& \int_{\R^d}  (-\Delta)^{\frac{1-\alpha} 2} \big(  v_i^N \nabla^\beta (\uh_j^{N,k} \ast \hat W_N)\big) \nabla (-\Delta)^{\frac{\alpha-1} 2} \psi_i  \, \dd x  =  \int_{\R^d}    v_i^N  (\uh_j^{N,k} \ast \nabla^\beta \hat W_N)   \nabla  \psi_i  \, \dd x.
\end{align*}
Then notice that $\nabla^{\beta} ( \uh_j^{N,k} \ast \hat W_N)  \rightharpoonup \nabla^{\beta} ( \uh_j^{N} \ast \hat W_N)$ weakly in $L^2(0,T; L^2(\R^d))$ and consider $\psi \in C^{\infty}_0(0, T; C^{\infty}_0 (\R^d))$. A standard argument shows that the initial condition is satisfied in the $L^2$-sense. 

We remark that by the lower semicontinuity of the norms, we obtain \eqref{Gron_apply_1}  also for  $\uh^N$. Standard arguments yield the uniqueness of solutions of problem~\eqref{lin.sys.reg}.

\smallskip 

\noindent \textbf{Step 2: Existence of local solutions for \eqref{syst:reg}.}  
 To show existence of a local solution of the nonlinear problem \eqref{syst:reg} we apply the Banach fixed point theorem in the space 
\begin{align*}
& \mathcal{X}: =\Big\{ v \in  L^2(0,T; H^{\alpha}(\R^d))^n\cap L^{\infty}(0,T;  L^2(\R^d))^n \, : \, \\
& \qquad \qquad \qquad \qquad \qquad  \| v \|^2_{ L^2(0,T; H^{\alpha}(\R^d))} +  \| v \|^2_{L^{\infty}(0,T; L^2(\R^d))} \leq 3 C^{\prime}(N) \|u^0\|^2_{2} \Big\},
\end{align*} 
where $C^{\prime}(N)$ is the maximum of the universal constants appearing in \eqref{Gron_apply_1}. In particular, we consider the following mapping
$$
 \mathcal K: \mathcal X \to \mathcal X,  \qquad v^{N}\stackrel{\mathcal K}{\longmapsto} \hat{u}^N,
$$
where $\uh^N$ is the unique weak solution of the linear problem \eqref{lin.sys.reg} provided by the previous step. Notice that by \eqref{Gron_apply_1}, for $T:= T(\|u^0\|_2, N)$ small enough  this mapping is a self-map of~$\mathcal X$.

We now show that for $T:=T(\|u^0\|_2,N)>0$ small enough, the mapping $\mathcal K$ is a contraction on $\mathcal X$. For this, we let $v_1^N \mapsto \hat{u}_1^N$ and $ v_1^N \mapsto \hat{u}_2^N$ and see that $\hat{u}_1^N - \hat{u}_2^N$ satisfies
\begin{align*}
\begin{split}
&\sup_{t \in (0,T]} \| \hat{u}_{1,i}^N - \hat{u}_{2,i}^N \|^2_{2} + \int_0^T \big\| (-\Delta)^{\frac{\alpha}{2}} (\hat{u}_{1,i}^N - \hat{u}_{2,i}^N) \big\|^2_{2} \, \dd t \\
& \lesssim \int_0^T \int_{\R^d}  \Big| \nabla (- \Delta)^{\frac{\alpha-1}{2}} ( \hat{u}_{1,i}^N - \hat{u}_{2,i}^N ) 
\sum_{j=1}^n (-\Delta)^{\frac{1-\alpha}{2}} \big(v^N_{1,i} \nabla^{\beta} (\hat{u}_{1,j}^N - \hat{u}_{2,j}^N)  \ast \hat{W}_N\big)  \Big| \, \dd x \, \dd t \\
& \qquad + \int_0^T \int_{\R^d} \Big| \nabla (- \Delta)^{\frac{\alpha-1}{2}} ( \hat{u}_{1,i}^N - \hat{u}_{2,i}^N ) 
 \sum_{j=1}^n  (-\Delta)^{\frac{1-\alpha}{2}} \big((v^N_{1,i}  - v^N_{2,i}) \nabla^{\beta} \hat{u}_{2,j}^N \ast \hat{W}_N \big)\, \Big|  \dd x \, \dd t\\
& \lesssim  \int_0^T \Big[ \varsigma  \big\|  (- \Delta)^{\frac{\alpha}{2}}(\hat{u}_{1,i}^N - \hat{u}_{2,i}^N)  \big\|^2_{2} +\sum_{j=1}^n C_{\varsigma}\Big( \|v_{1,i}^N \|^2_{H^{1-\alpha}} \big\| \nabla^{\beta} ( \hat{u}_{1,j}^N - \hat{u}_{2,j}^N) \ast \hat{W}_N \big\|^2_{W^{1,\infty}} \\
& \hspace{3cm} +  \| v_{1,i}^N - v_{2,i}^N\|^2_{ H^{1-\alpha}} \big\| \nabla^{\beta} \uh^N_{2,j} \ast \hat{W}_N \big\|^2_{W^{1,\infty} }\Big)  \Big] \, \dd t,
\end{split}
\end{align*}
for $\varsigma>0$. Here we have used the relation \eqref{frac_Leib_2}. We use \eqref{GN_form} and Young's inequality for convolutions, to continue the above estimate as 
\begin{align*}
&\sup_{t \in (0,T]}  \| \hat{u}_{1,i}^N - \hat{u}_{2,i}^N \|^2_{2} +  \int_0^T \big\| (-\Delta)^{\frac{\alpha}{2}} (\hat{u}_{1,i}^N - \hat{u}_{2,i}^N) \big\|^2_{2} \, \dd t \\
&  \lesssim_N  \int_0^T \hspace{-0.2 cm } \big[C_{\varsigma^{\prime}} \|v_{1,i}^N \|^2_{2}  +  \varsigma^{\prime}  \|v_{1,i}^N \|^2_{ H^{\alpha}}\big]  \|\hat{u}_{1}^N - \hat{u}_{2}^N\|^2_{2} +  \big[C_{\varsigma^{\prime \prime}} \|v_{1,i}^N - v_{2,i}^N \|^2_{2}  +  \varsigma^{\prime \prime}  \|v_{1,i}^N - v_{2,i}^N\|^2_{ H^{\alpha}}\big]   \| \uh^N_2\|^2_{2}  \, \dd t,
\end{align*}
for $\varsigma^{\prime}$ and $\varsigma^{\prime \prime}>0$. Treating the terms on the right-hand side in more detail, we obtain
\begin{align*}
& \int_0^T\big[C_{\varsigma^{\prime}} \|v_{1,i}^N \|^2_{2}  +  \varsigma^{\prime}  \|v_{1,i}^N \|^2_{ H^{\alpha}} \big]  \|\hat{u}_{1}^N - \hat{u}_{2}^N\|^2_{2} \dd t\\
& \quad  \leq C_{\varsigma^{\prime}} \sup_{t\in (0,T]} \|v_{1,i}^N \|^2_{2} \sup_{t\in (0,T]} \|\hat{u}_{1}^N - \hat{u}_{2}^N\|^2_{2} T + \varsigma^{\prime}  \|v_{1,i}^N \|^2_{L^2(0,T ; H^{\alpha}(\R^d))}  \sup_{t \in (0,T]} \| \hat{u}_{1}^N - \hat{u}_{2}^N \|^2_{2}\\
& \quad  \leq 3 C^{\prime}(N) \|u^0\|_2^2 \big( C_{\varsigma^{\prime}} T \sup_{t\in (0,T]} \|\hat{u}_{1}^N - \hat{u}_{2}^N\|^2_{2}  + \varsigma^{\prime}   \sup_{t \in (0,T]} \| \hat{u}_{1}^N - \hat{u}_{2}^N \|^2_{2} \big)
\end{align*}
and, in exactly the same way, we find that 
\begin{align*}
& \int_0^T(C_{\varsigma^{\prime \prime}} \|v_{1,i}^N - v_{2,i}^N \|^2_{2}  +  \varsigma^{\prime \prime}  \|v_{1,i}^N - v_{2,i}^N\|^2_{H^{\alpha}})   \| \uh^N_2\|^2_{2} \dd t\\
& \quad \leq 3 C^{\prime}(N) \|u^0\|_2^2 \big( C_{\varsigma^{\prime \prime}} T \sup_{t\in (0,T]} \|v_{1,i}^N - v_{2,i}^N\|^2_{2}  + \varsigma^{\prime \prime}  \| v_{1,i}^N - v_{2,i}^N \|^2_{L^2(0,T ; H^{\alpha}(\R^d))} \big).
\end{align*}
Summing over $i = 1,\dots, n$ and choosing appropriate  $T$,   depending on $\| u^0\|_{2}^2$ and $N$,  and  $\varsigma^{\prime}>0$, we obtain 
\begin{equation*}
\begin{aligned}
&  \sup_{t\in(0,T]} \| \hat{u}_{1}^N - \hat{u}_{2}^N \|^2_{2} +  \int_0^T \big\| (-\Delta)^{\frac{\alpha}{2}} (\hat{u}_{1}^N - \hat{u}_{2}^N) \big\|^2_{2} \, \dd t \\
 & \qquad \qquad   \leq  3 C^{\prime}(N) \|u^0\|_2^2 \big( C_{\varsigma^{\prime \prime}} T \sup_{t\in (0,T]} \|v_{1}^N - v_{2}^N\|^2_{2}  + \varsigma^{\prime \prime}  \| v_{1}^N - v_{2}^N \|^2_{L^2(0,T ; H^{\alpha}(\R^d))} \big).
\end{aligned}
\end{equation*}
Possibly choosing a smaller $\varsigma^{\prime \prime}$ and $T$, this shows that for $T:= T(\|u^0\|_2^2, N)$ small enough  the mapping $\mathcal K$ is a contraction on $\mathcal X$. 

By the Banach fixed-point theorem we obtain a unique fixed point of the mapping $\mathcal K$ in the set $\mathcal X$. This fixed point is a local solution of \eqref{syst:reg} up to the time $T:=T(\|u^0\|_2^2, N)$.

\smallskip

\noindent \textbf{Step 3: Higher-order \textit{a priori} estimates for solutions of \eqref{syst:reg}.}
In this step we show that $\uh^{N}\in L^2(0,T; H^{s+\alpha}(\R^d))^n$, where $u^{0} \in H^{s}(\R^d)^n$. The distinction between the current step and the next is that here we allow the constants in our estimates to depend on $N$.

Let $ \tau \in (0, T]$, where this is the interval of existence of the local solution $\uh^N$. Taking $\psi_i = D^l_{-h} D^l_h \hat u^N_i$, for $l =1, \dots, s$, as a test function in \eqref{def_sol:sys.reg} and using estimate  \eqref{frac_Leib_2} yields
\begin{align*}
& \big \|  D^l_h \uh^{N}_i (\tau)\big \|_{2}^2 +  \int_0^\tau \big\|  (-\Delta)^{\frac \alpha 2}  D^l_h  \uh_i^{N} \big\|^2_{2}  \,  \dd t 
\\
&\lesssim \big\|  D^l_h u^{0}_i \big \|_{2}^2 +  \sum_{j=1}^n   \int_0^\tau  \sum_{m=1}^l  \big\| D^{m}_h \uh_i^N \big\|_{H^{1-\alpha}}^2 \big\|  \uh_j^N \ast  D_h^{l-m}   \nabla^\beta   \hat  W_N \big\|_{W^{1, \infty}}^2 \,  \dd t 
\\ &\lesssim_N  \big\|  D^l_h u^{0}_i \big \|_{2}^2  +  \sum_{j=1}^n  \int_0^\tau  \sum_{m=1}^l \big \| D^{m}_h \uh_i^N \big\|_{H^{\alpha}}^{\frac{2(1-\alpha)} \alpha} \big \| D^{m}_h \uh_i^N \big\|_{2}^{\frac{2(2\alpha-1)} \alpha} \| \uh_j^N \|_2^2 \,  \dd t \\
& \lesssim_N  \big\|  D^l_h u^{0}_i \big \|_{2}^2  +   \int_0^\tau \sum_{m=1}^l \Big(   \big\| D^{m}_h \uh_i^N\|_{2}^2  \big\| \uh^N \|_2^2 +  \varsigma \big\| (-\Delta)^{\frac{\alpha}{2}} D^{m}_h \uh_i^N \big\|_{2}^{2} + C_\varsigma \big\| D^{m}_h \uh_i^N \big\|^{2}_2  \| \uh^N \|_2^{\frac{2\alpha}{2\alpha-1}} \Big) \, \dd t , 
\end{align*}
for $i=1, \ldots, n$ and  $\varsigma>0$. 
Summing over $l$ and $i$ gives 
 \begin{equation*}
\begin{aligned}
 & \sum_{l=1}^s  \| D_h^l \uh^{N}(\tau)\|^2_{2}  +   \int_0^\tau \sum_{l=1}^s  \big\| (-\Delta)^{\frac \alpha 2} D_h^l\uh^{N}\big\|^2_{2} \,  \dd t \\
& \quad \lesssim \sum_{l=1}^s \Big( \big\|  D^l_h u^{0} \big\|_{2}^2 +    \big(\|\uh^N\|^{\frac {2\alpha}{2\alpha- 1}}_{L^{\infty}(0, \tau; L^2(\R^d))}    +  \|\uh^N\|^2_{L^{\infty}(0, \tau; L^2(\R^d))} \big) \int_0^\tau \big\| D_h^{l}  \uh^{N}\big\|_{2}^{2} \, \dd t \Big).
 \end{aligned}
\end{equation*}
Thus, the regularity assumption on $u^0$ and applying the Gr\"onwall inequality yields
\begin{align*}
\sum_{m=1}^s  \Big(\big\| D_h^m  \uh^{N} \big\|_{L^\infty(0,T; L^2(\R^d))} +  \big\| D_h^m  \uh^{N}\big\|_{L^2(0,T; H^{\alpha}(\R^d))} \Big) \le C(N), 
\end{align*}
where $C(N)>0$ is independent of $h$, and hence 
\begin{align*}
 \| \uh^{N}\|_{L^\infty(0,T; H^s(\R^d))} + \| \uh^{N}\|_{L^2(0,T; H^{s+\alpha}(\R^d))}  \le C(N). 
\end{align*}

\noindent \textbf{Step 4: Uniform in $N$ higher-order estimates for solutions of \eqref{syst:reg}.} In this step we show \eqref{estim_Hs_1} and \eqref{estim_hu_N}. The main difficulty is showing that there exists $s^{\prime}<s$ such that 
\begin{equation}
\label{estim_ho_1}
 \begin{aligned}
&\sum_{i=1}^n \frac{d}{dt} \| \uh^{N}_i\|^2_{H^s}  + \sum_{i=1}^n \tilde{\sigma} \big\| (-\Delta)^{\frac \alpha 2} \uh^{N}_i \big\|^2_{H^s}  \\
 &\lesssim \sum_{i,j=1}^n \Big[ \|\hat u^N_j\|_{H^s} \big\|(-\Delta)^{\frac \alpha 2}\hat u^N_i\big\|_{H^{s'}} + 
\big\|(-\Delta)^{\frac \alpha 2}\hat u^N_j \big\|_{H^{s^\prime}}  \|\hat u^N_i\|_{H^s}  \Big]\big\|(-\Delta)^{\frac\alpha 2}\hat u^N_i\big\|_{H^s} \\ \end{aligned}
\end{equation}
holds, where $\tilde{\sigma}>0$. To see that \eqref{estim_ho_1} is sufficient for \eqref{estim_Hs_1} and \eqref{estim_hu_N}, notice that 
\begin{align*}
  \|\hat u^N_j\|_{H^s} \big\|(-\Delta)^{\frac \alpha 2}\hat u^N_i \big\|_{H^{s'}}  \lesssim \|\hat u^N_j\|_{H^s}       \big \|(-\Delta)^{\frac \alpha 2}\hat u^N_i \big\|^\theta_{H^s} \|\hat u^N_i\|^{1-\theta}_{H^s} \quad \text{for } \theta \in (0,1),
  \end{align*}
 where $s^{\prime}<s$. 
We then obtain 
\begin{equation}
\label{ho_1}
 \begin{aligned}
 \frac{d}{dt} \| \uh^{N}\|^2_{H^s}  + \tilde{\sigma}\big\| (-\Delta)^{\frac \alpha 2} \uh^{N}\big\|^2_{H^s} \lesssim  \big\|(-\Delta)^{\frac \alpha 2}\hat u^N \big\|^{1+\theta}_{H^s}   \|\hat u^N\|^{2-\theta}_{H^s}. 
 \end{aligned}
\end{equation}
 Integrating \eqref{ho_1} in time and applying H\"older's inequality gives that 
\begin{align*}
&  \| \uh^{N}(\tau)\|^2_{H^s} \hspace{-0.1 cm }  + \hspace{-0.1 cm }  \tilde{\sigma}\int_0^\tau  \hspace{-0.2 cm } \big\| (-\Delta)^{\frac \alpha 2} \uh^{N} \big\|^2_{H^s} \dd t 
\lesssim 
 \| u^0\|^2_{H^s} 
\hspace{-0.1 cm } + \hspace{-0.1 cm }\Big[ \hspace{-0.1 cm }\int_0^\tau \hspace{-0.2 cm }  \big\| (-\Delta)^{\frac \alpha 2} \uh^{N} \big\|^2_{H^s} \dd t \Big]^{\frac { 1+ \theta} 2} 
\Big[ \hspace{-0.1 cm } \int_0^\tau  \hspace{-0.2 cm } \|  \uh^{N}\|^{\frac{2(2-\theta)}{1-\theta}}_{H^s} \dd t \Big]^{\frac { 1- \theta} 2},
\end{align*}
which for $\tau \in (0, T]$ yields
 \begin{align*}
 \| \uh^{N}(\tau)\|^2_{H^s}  + \int_0^\tau \big\| (-\Delta)^{\frac \alpha 2} \uh^{N}\big\|^2_{H^s}  \dd t   \lesssim  \| u^0\|^2_{H^s}  + 
 \int_0^\tau  \|  \uh^{N}\|^{\frac{2(2-\theta)}{1-\theta}}_{H^s} \dd t . 
\end{align*}
An application of the generalized Gr\"{o}nwall inequality, see e.g.~\cite{Lip00}, and assumptions on the initial data  yield  \eqref{estim_Hs_1}. The relation \eqref{estim_hu_N} follows from Morrey's inequality.

We now give the argument for \eqref{estim_ho_1}. By the previous step, we use $\phi_i = D^l D^l_h \uh^N_i$ as a test function in  \eqref{def_sol:sys.reg}. Integrating by parts and taking the limit $h\to 0$  yields  
\begin{align*}
 & \frac{d}{dt} \big \|  D^l \uh^{N}_i \big \|_{2}^2 +  2\sigma_i \big\|  (-\Delta)^{\frac \alpha 2}  D^l  \uh_i^{N} \big\|^2_{2}
 \lesssim  
  \sum_{j=1}^n  \big\|(-\Delta)^{\frac{1-\alpha}2}   D^{l}\big(\uh_i^N\, \nabla^\beta\uh_j^{N} \ast \hat W_N \big)\big\|_2 \big\|  \nabla (-\Delta)^{\frac {\alpha-1} 2}   D^l \uh_i^{N}\big\|_{2},
\end{align*}
for all $l=0,\ldots, s$. 
 We then first apply the product rule to write
\begin{align*}
&  \big\|   (-\Delta)^{\frac{1-\alpha}2}  D^l \big(\uh_i^N \nabla^\beta \uh^{N}_j \ast \hat W_N \big)\big\|_{2} 
\leq  \sum_{m=1}^{l-1}  \big\|  (-\Delta)^{\frac{1-\alpha}2}  \big(D^{l-m} \uh_i^N  \, D^m \nabla^\beta \uh^{N}_j \ast \hat W_N \big)\big\|_{2} \\
& + \big\|  (-\Delta)^{\frac{1-\alpha}2}  \big(D^{l} \uh_i^N \,   \nabla^\beta \uh^N_j \ast \hat W_N\big)\big\|_{2}   +  \big\|  (-\Delta)^{\frac{1-\alpha}2}  \big( \uh_i^N  \, D^l \nabla^\beta \uh^{N}_j \ast \hat W_N\big)\big\|_{2} :={\rm J_1+J_2+J_3} .
\end{align*}
Applying the fractional Leibniz rule \eqref{frac_Leib_gen}, the last two terms on the right-hand side are estimated as
\begin{subequations} 
\begin{equation}
\label{May_23_3}
\begin{split}
{\rm J_2 }
& \lesssim   \big \|  (-\Delta)^{\frac{1-\alpha}2}  D^l \uh_i^N \,   \nabla^\beta \uh^{N}_j \ast \hat W_N \big\|_{2}
 + \big\| D^l \uh_i^N\,   (-\Delta)^{\frac{1-\alpha}2} \nabla^\beta \uh^{N}_j \ast \hat W_N \big\|_{2}  
 \\ 
 & \qquad + \big\| D^l (-\Delta)^{\frac{\alpha_1}2} \uh_i^N \big\|_{{p_1}} \big \| (-\Delta)^{\frac{\alpha_2}2} \nabla^\beta \uh^{N}_j \ast \hat W_N \big\|_{{p_2}}  := \rm J_{21}+  \rm J_{22} + \rm J_{23}, 
 \end{split}
\end{equation}
\begin{equation}
\label{May_23_2}
\begin{split}
{\rm J_3}
 & \lesssim   \big \|  (-\Delta)^{\frac{1-\alpha}2}  \uh_i^N\,   D^l \nabla^\beta \uh^{N}_j \ast \hat W_N \big\|_{2}
+ \big\|  \uh_i^N\,   (-\Delta)^{\frac{1-\alpha}2} D^l \nabla^\beta \uh^{N}_j \ast \hat W_N \big\|_{2}  
 \\ 
& \qquad + \big \| (-\Delta)^{\frac{\alpha_1}2} \uh_i^N \big\|_{{q_1}}  \big\| (-\Delta)^{\frac{\alpha_2}{2}} D^l \nabla^\beta \uh^{N}_j \ast \hat W_N \big\|_{{q_2}} :=  \rm J_{31} + \rm J_{32} + \rm J_{33}, 
 \end{split}
\end{equation}
\end{subequations}
where $\alpha_1 + \alpha_2 = 1- \alpha$ ($\alpha_1$ and $\alpha_2$ can be different in \eqref{May_23_3} and \eqref{May_23_2}) and $1/2 = 1/p_1 + 1/p_2 = 1/q_1 + 1/q_2$. 
We then apply H\"{o}lder's inequality and use Young's inequality for convolutions along with the $L^1$-normalization of $\hat{W}_N$ to write
\begin{equation} 
\label{May_23_1}
\begin{aligned} 
{\rm J_{21}} & \leq  \big \|  (-\Delta)^{\frac{1-\alpha}2}  D^{l} \uh_i^N  \big\|_{2p}  \big \| \nabla^\beta \uh^{N}_j  \big\|_{2p^\prime},   
&& \hspace{-0.2 cm }  {\rm J_{22}}  \leq  \big \|  D^l \uh_i^N  \big\|_{2\hat p}  \big \| (-\Delta)^{\frac{1-\alpha}2} \nabla^\beta \uh^{N}_j \big\|_{2\hat p^\prime} , \\
  {\rm J_{31}}  &\leq  \big \|   (-\Delta)^{\frac{1-\alpha}2}  \uh_i^N   \big\|_{2q}  \big \| D^l \nabla^\beta \uh^{N}_j  \big\|_{2q^\prime},  
  && \hspace{-0.2 cm }  {\rm J_{32}}  \leq  \big \|   \uh_i^N   \big\|_{2\hat q}  \big \|  (-\Delta)^{\frac{1-\alpha}2} D^l \nabla^\beta \uh^{N}_j   \big\|_{2\hat q^\prime},   
 \end{aligned}  
\end{equation}
where  $1/p + 1/p^\prime = 1/q + 1/q^\prime = 1/\hat p + 1/\hat p^\prime = 1/\hat q + 1/\hat q^\prime =1$.   We estimate each term separately and  split our arguments into two cases, which are $l=0$ and $1 \leq l \leq s$. 
\smallskip

\noindent{\textbf{Treatment of the $\rm J_{ij}$ for $i= 2,3$ and $j = 1,2,3$ when $l=0$}.} \quad \\
\noindent \textbf{(\textit{i})} $\rm J_{21}$ \textit{and} $\rm J_{31}$ \; Since $l=0$, we have that $\rm J_{21} = \rm J_{31}$. We then further distinguish between two cases:  $0<\alpha-\beta < d/2$ and $\alpha< {(d+2)}/4$. Notice that whenever $d>1$ the conditions are both trivially satisfied. 
\\
\noindent \textit{Case 1:}  $0<\alpha-\beta < d/2$.  Then there exists $\gamma \in (0, 1- \alpha)$ such that $d/(1-\beta - \gamma) > 2$.  We first notice that $p^{\prime}$ in \eqref{May_23_1} can be chosen such that $p^\prime >  d/(d- 2(1-\beta - \gamma))$. Then, using the theorem for Riesz potentials \eqref{Riesz_est}, that $\beta+ \gamma < 1$,  and the fractional Sobolev embedding \cite[Theorem~6.5]{NPV_2012}, yields 
\begin{align}\label{technical_1}
\begin{split}
 \big \| \nabla^\beta \uh^{N}_j  \big\|_{2p^\prime}   = \big \| (-\Delta)^{\frac{\beta + \gamma-1}2}  \nabla (-\Delta)^{- \frac{\gamma } 2} \uh^{N}_j  \big\|_{2p^\prime} \lesssim  \big \| \nabla (-\Delta)^{- \frac{\gamma + \alpha} 2}  (-\Delta)^{\frac \alpha 2}   \uh^{N}_j  \big\|_{r} \\   \lesssim  \big\|  \nabla (-\Delta)^{- \frac{\gamma + \alpha} 2}  (-\Delta)^{\frac \alpha 2}  \uh^{N}_j \big\|_{H^{s^\prime -1 + \alpha + \gamma}} 
 \lesssim \big \|(-\Delta)^{\frac \alpha 2}  \uh^{N}_j \big\|_{H^{s^\prime}},  
\end{split}
\end{align}
where $r= 2p^\prime d /(d + 2 p^\prime (1-\beta - \gamma)) > 2$ and $d/2+ \beta - \alpha \le  s^\prime  < s$. 
 By the Sobolev embedding we  obtain 
$$
\big \|  (-\Delta)^{\frac{1-\alpha}2}   \uh_i^N\,   \big\|_{2p}   \lesssim  \|\uh_i^N\,   \big\|_{H^s},
$$
for $1< p< d/(d - 2(s-1+ \alpha))$. To see that $1/p + 1/p^{\prime} =1$ is possible, notice that $p^{\prime} > d/(d- 2(1-\beta - \gamma))> d/(2(s-1+\alpha))$ since $s \geq d/2 $, $\gamma < 1 - \alpha$, and $\beta + 1 < 2 \alpha$.\\
\noindent \textit{Case 2:}  $0<\alpha< (d+2)/4$.  We choose $p$ in \eqref{May_23_1} such that $p > d/(d- 2(2\alpha -1))$. Then applying  \eqref{Riesz_est} and  the Sobolev embedding, implies 
\begin{align}
\label{technical_2.1}
\big \|  (-\Delta)^{\frac{1-\alpha}2}   \uh_i^N\,   \big\|_{2p} 
\lesssim  \big \|    (-\Delta)^{\frac{\alpha}2} \uh_i^N\,   \big\|_{r^\prime}   \lesssim  \big \|    (-\Delta)^{\frac{\alpha}2} \uh_i^N\,   \big\|_{H^{s^\prime}}, 
\end{align}
 where $r^\prime =2pd/(d+  2(2\alpha-1)p) > 2$ and $d/2 - (2\alpha -1) \le s^\prime < s$. We again apply the Sobolev embedding to write
$$
\begin{aligned} 
 \big \| \nabla^\beta \uh^{N}_j  \big\|_{2p^\prime}   =  \big \| (-\Delta)^{\frac{\beta -1}2}  \nabla \uh^{N}_j  \big\|_{2p^\prime}  \lesssim 
 \big \| (-\Delta)^{\frac{\beta -1}2}  \nabla \uh^{N}_j  \big\|_{H^{s- \beta}} \lesssim \big\|\uh_j^N\,   \big\|_{H^s}, 
  \end{aligned} 
$$
where we require that $1< p^\prime< d/(d- 2(s - \beta))$. Since $p\ge d/(d- 2(2\alpha -1))\ge d/(2(s-\beta))$ for $s\ge d/2$, the condition $1/p + 1/p^\prime = 1$ can be satisfied.\\
\noindent \textbf{(\textit{ii})} \textit{${\rm J_{22}}$ and $\rm J_{32}$}  \; Since $l=0$, we have that $\rm J_{22} = \rm J_{32}$. 

We first notice  
$ 
\|   \uh^{N}_i \|_{2 \hat p }  \lesssim  \| \uh_i^N \|_{H^s},
$
for  any $1<\hat  p < \infty$ since $s>d/2$. 
Under the conditions of both Case 1 or Case 2 above, we have  $\alpha < (d+2)/4 + \beta/2$. We now choose $0< \gamma < 1$ such that $2\alpha < \gamma + \beta + d/2 $ holds and set  $\hat{p}^{\prime}$ in~\eqref{May_23_1} such that $\hat p^\prime > d/(d-2(2\alpha - \beta - \gamma))$, to obtain 
\begin{equation*} 
\begin{aligned} 
& \big\| (-\Delta)^{\frac{1-\alpha}2} \nabla^\beta \uh^{N}_j \big\|_{2\hat p^\prime} = \big \| (-\Delta)^{\frac{\beta + \gamma -2\alpha}2}  (-\Delta)^{ - \frac \gamma 2 }   \nabla   (-\Delta)^{\frac \alpha 2 } \uh^{N}_j\big \|_{2\hat p^\prime} \\
 & \lesssim \big \|   (-\Delta)^{ - \frac \gamma 2 }   \nabla (-\Delta)^{\frac{\alpha}2}\uh^{N}_j\big \|_{r}  
 \lesssim \big \|   (-\Delta)^{ - \frac \gamma 2 }   \nabla (-\Delta)^{\frac{\alpha}2}\uh^{N}_j \big\|_{H^{s^\prime -1 + \gamma}}  \lesssim  \big\| (-\Delta)^{\frac{\alpha}2}\uh^{N}_j \big\|_{H^{s^\prime}}, 
 \end{aligned} 
\end{equation*}
where $r= 2\hat p ^\prime d / (d + 2 \hat p^\prime(2\alpha - \beta - \gamma))>2$ and  $ d/2 - (2\alpha - \beta -1)\le s^\prime < s$.\\ 
\noindent \textbf{(\textit{iii})} $\rm J_{23}$ \textit{and} $\rm J_{33}$ \quad Since $l=0$, we have that ${\rm J_{23}} = {\rm J_{33}}$. These terms can be estimated in a similar way as in~(\textit{ii}). In particular,  the Sobolev embedding yields 
\begin{equation*} 
 \big\| (-\Delta)^{\frac{\alpha_1}2} \uh_i^N \big\|_{{p_1}}  \le \|  \uh_i^N\|_{H^s}, 
\end{equation*}
where we require that $2< p_1 \le 2d/(d - 2(s-\alpha_1))$. 

We  notice that $\alpha < (d+2)/4 + \beta/2$ is satisfied in both Cases 1 and 2 above. Then we can fix $0< \gamma^\prime < 1$ such that $1+\alpha - \alpha_2 < \gamma^\prime + \beta + d/2$ for some $0<\alpha_2 < 1- \alpha$, where $\alpha_2$ is set in such a way that $\gamma^{\prime}$ exists. Furthermore, choosing $p_2$ such that $p_2 > 2d/(d - 2(1+ \alpha - \gamma^\prime - \alpha_2 - \beta))$, we can then estimate
\begin{equation*} 
\begin{aligned} 
 & \big\| (-\Delta)^{\frac{\alpha_2}2} \nabla^\beta \uh^{N}_j \big\|_{{p_2}}   =  \big\| \nabla  (-\Delta)^{-\frac{\gamma^\prime} 2} (-\Delta)^{\frac{\gamma^\prime + \alpha_2+ \beta - 1 - \alpha}2}  (-\Delta)^{\frac \alpha 2}  \uh^{N}_j \big\|_{{p_2}} \\
  &  \lesssim  \big\|   \nabla  (-\Delta)^{ - \frac {\gamma^\prime}  2 }   (-\Delta)^{\frac{\alpha}2}\uh^{N}_j \big\|_{r}    \lesssim  \big\|   \nabla   (-\Delta)^{ - \frac {\gamma^\prime}  2 }  (-\Delta)^{\frac{\alpha}2}\uh^{N}_j\big \|_{H^{s^\prime -1 + \gamma^\prime}}  \lesssim  \big\| (-\Delta)^{\frac{\alpha}2}\uh^{N}_j \big\|_{H^{s^\prime}}, 
  \end{aligned} 
\end{equation*}
where $r = p_2 d/ (d+ p_2 (1+ \alpha - \gamma^\prime - \alpha_2 - \beta))>2$ and $ d/2 - (\alpha - \alpha_2- \beta)\le s^\prime < s$. Since $s>d/2$ we have that $p_2 > 2d/(d - 2(1+ \alpha - \gamma^\prime - \alpha_2 - \beta)) \ge d/(s- \alpha_1)$, which implies that  $1/p_1 + 1/p_2 = 1/2$ can be satisfied.
\\
{\textbf{Treatment of the $\rm J_{ij}$ for $i= 2,3$ and $j = 1,2,3$ when $1 \leq l \leq s$}.}\quad \\
\noindent \textbf{(\textit{i})} ${ \rm J_{21}}$ \; We notice that
 $$
 \big \| \nabla^\beta \uh^{N}_j  \big\|_{2p^\prime}   =  \big \| (-\Delta)^{\frac{\beta -1}2}  \nabla \uh^{N}_j  \big\|_{2p^\prime}  \lesssim 
 \big \| (-\Delta)^{\frac{\beta -1}2}  \nabla \uh^{N}_j  \big\|_{H^{s- \beta}} \lesssim \big\|\uh_j^N\,   \big\|_{H^s},  
$$
 where we require that $1<p^\prime \leq d/(d- 2(s - \beta))$ if $s - \beta < d/2$ or any $1< p^{\prime} <\infty$ if $s- \beta \geq d/2$. Furthermore, we have the following embeddings:
\begin{align*}
&\big \| D^l (-\Delta)^{\frac{1-\alpha}2}   \uh_i^N\,   \big\|_{2p}  
\lesssim  \big \|   D^l (-\Delta)^{\frac{1-2\alpha}2}  (-\Delta)^{\frac{\alpha}2} \uh_i^N\,   \big\|_{H^{s^\prime - l -1 + 2\alpha } }   \lesssim  \big \|    (-\Delta)^{\frac{\alpha}2} \uh_i^N\,   \big\|_{H^{s^\prime}}, 
\end{align*}
 where  $d/(2(s- \beta))\leq p \leq d/ (d- 2 (s^\prime+ 2\alpha - s -1))$ and $s^{\prime}$ can be chosen to satisfy $\max\{ d/2 - (2\alpha - 1 - \beta), s+ 1 - 2\alpha \} < s^\prime <s$.     Notice that the lower bound for $p$ is derived from the upper bound for $p^{\prime}$, since the two are H\"{o}lder conjugates, and it is possible to choose $p$ and $p^{\prime}$ due to the restrictions on $s^{\prime}$.\\
\noindent \textbf{(\textit{ii})} ${\rm J_{31}}$ \; Using similar estimates as in the previous case, we obtain
$$
\big \|  (-\Delta)^{\frac{1-\alpha}2}   \uh_i^N\,   \big\|_{2q}     \lesssim  \big \|   \uh_i^N\,   \big\|_{H^{s}}
$$
where  we require that $1< q \le d/(d - 2(s+ \alpha -1))$ if $s+ \alpha -1 < d/2$ or $1< q <\infty$ if $s+ \alpha -1 \geq d/2$. Additionally, we find that 
$$
 \big \| D^l \nabla^\beta \uh^{N}_j  \big\|_{2q^\prime}   
 \lesssim 
 \big \|  D^l (-\Delta)^{\frac{\beta -1- \alpha}2}  \nabla   (-\Delta)^{\frac{\alpha}2}   \uh^{N}_j  \big\|_{H^{s^\prime- l -\beta + \alpha}} \lesssim \big\|  (-\Delta)^{\frac{\alpha}2}  \uh_j^N\,   \big\|_{H^{s^\prime}},  
$$
where $d/(2(s+\alpha -1)) \le q^\prime  \le d/(d - 2(s^\prime + \alpha - s- \beta))$. Again,  the lower bound for $q^{\prime}$ is derived from the upper bound for $q$. It is possible to choose an appropriate $q^{\prime}$ satisfying the above conditions if $\max\{d/2 - (2 \alpha -1 - \beta), s+ \beta - \alpha\} < s^\prime < s$.\\
\noindent \textbf{(\textit{iii})} ${\rm J_{22}}$ \quad  The Sobolev embedding ensures
$
 \big\| (-\Delta)^{\frac{1-\alpha}2} \nabla^\beta \uh^{N}_j \big\|_{2\hat p^\prime}  \lesssim  \| \uh^{N}_j \|_{H^{s}}, 
 $
where  $ 1 < \hat p^\prime \le d / (d - 2(s  + \alpha - 1 - \beta))$ if $s  + \alpha - 1 - \beta < d/2$ or $1< \hat{p}^{\prime} <\infty$ if $s+ \alpha - 1 - \beta \geq  d/2$. Additionally, we estimate 
\begin{equation*} 
\big\|  D^l  \uh^{N}_i \big\|_{2 \hat p } =   \big\|  D^l  ( -\Delta)^{- \frac \alpha 2}   ( -\Delta)^{\frac \alpha 2}   \uh^{N}_i \big\|_{2 \hat p }   \le  \big\|  ( -\Delta)^{\frac \alpha 2}  \uh_i^N \big\|_{H^{s^\prime} } 
\end{equation*}
for  $d/(2(s-(1+\beta - \alpha)))\le\hat  p \le  d/(d - 2(s^\prime +  \alpha - s))$. In order to ensure the existence of an appropriate $\hat  p$ we choose $s^{\prime}$ to satisfy $\max\{d/2 - (2\alpha - \beta -1),  s- \alpha\}<  s^\prime < s$.\\
\noindent \textbf{(\textit{iv})} ${\rm J_{32}}$ \quad  We have 
$ 
\|   \uh^{N}_i \|_{2 \hat q }   \le  \|   \uh_i^N \|_{H^s} 
$
for  any $1<\hat  q <\infty$,  since  $s\geq d/2$, and 
\begin{equation*} 
\begin{aligned} 
 \big\| D^l(-\Delta)^{\frac{1-\alpha}2} \nabla^\beta \uh^{N}_j \big\|_{2\hat q^\prime}  = 
 \big\| D^l \nabla (-\Delta)^{\frac{\beta-2\alpha}2}  (-\Delta)^{\frac \alpha 2}  \uh^{N}_j \big\|_{2\hat q^\prime} 
  \lesssim  \big\|  (-\Delta)^{\frac \alpha 2}  \uh^{N}_j \big\|_{H^{s^\prime}}, 
 \end{aligned} 
\end{equation*}
where we require that $ 1 < \hat q^\prime \le d / (d - 2(s^\prime + 2\alpha -1 - \beta- s)) $. It is possible to find such a $\hat q^\prime$ by setting $s^{\prime}$ to satisfy  $s- (2\alpha - 1- \beta) < s^\prime < s$. \\
\noindent \textbf{(\textit{v})} ${\rm J_{23}}$ \quad Using the Sobolev embedding, we find 
\begin{align*}  
{\rm J_{23} } \le \big\| D^{l} (-\Delta)^{\frac{\alpha_1 - \alpha}2}  (-\Delta)^{\frac \alpha 2}   \uh_i^N \big\|_{p_1} \big\| (-\Delta)^{\frac{\alpha_2}2} \nabla^\beta \uh^{N}_j  \big\|_{p_2}  \lesssim  \big \|  (-\Delta)^{\frac\alpha 2} \uh_i^N \big\|_{H^{s^\prime}}\|  \uh^{N}_j\|_{H^s},
\end{align*}
where   $2  < p_1 \leq 2 d/(d -2 (s^\prime + \alpha -s - \alpha_1))$ and $ d/(s^\prime+\alpha -s - \alpha_1)  \leq   p_2 \leq 2 d/(d-2(s - \beta - \alpha_2))$.  These relations are satisfied for $ \max\{ d/2 - (2\alpha  - \beta-1), s + \alpha_1 - \alpha\} < s^{\prime} < s$.\\
\noindent \textbf{(\textit{vi})} ${\rm J_{33}}$ \quad We estimate this term as
\begin{align*}  
{\rm J_{33} }  & \le  \big\| (-\Delta)^{\frac{\alpha_1}2} \uh_i^N  \big\|_{q_1}  \big\| D^l  \nabla  (-\Delta)^{\frac{\alpha_2+ \beta - 1 - \alpha}2}  (-\Delta)^{\frac{\alpha}2}  \uh^{N}_j  \big\|_{q_2}  \lesssim  \|\uh_i^N\|_{H^s}\big\|(-\Delta)^{\frac \alpha 2} \uh^{N}_j\big\|_{H^{s^\prime}}  ,
\end{align*}
where $2< q_1 \leq 2d /(d - 2(s - \alpha_1))$ if $s-\alpha_1 < d/2$  or $2<q_1<\infty$ if $s-\alpha_1 \geq d/2$ and, furthermore,  $d/(s- \alpha_1)\leq q_2 \leq 2d/(d - 2(s^\prime - s + \alpha_1 + 2\alpha -1 - \beta))$. These conditions stipulate that $s-s^\prime- (2\alpha - \beta-1)<\alpha_1$ and  $s^\prime \geq d/2 - (2\alpha -\beta -1)$. 

We remark that the  term ${\rm J_1}$ only appears when $l \geq 2$. Applying  \eqref{frac_Leib_gen} yields
$$
\begin{aligned}
  {\rm J_1}  
  \lesssim &   \sum_{m=1}^{l-1}     \big\|  (-\Delta)^{\frac{1-\alpha}2}   D^{l-m} \uh_i^N      D^m \nabla^\beta \uh^{N}_j \ast \hat W_N \big\|_{2} +    \big \| D^{l-m}  \uh_i^N    (-\Delta)^{\frac{1-\alpha}2}  D^m\nabla^\beta \uh^{N}_j \ast \hat W_N \big\|_{2} 
\\ &+  
  \sum_{m=1}^{l-1}  \big\| (-\Delta)^{\frac{\alpha_1}2} D^{l-m}  \uh_i^N \big\|_{p_1^m}  \big\| (-\Delta)^{\frac{\alpha_2}2} D^m \nabla^\beta \uh^{N}_j \ast \hat W_N \big\|_{p_2^m}  =:{\rm J_{11} + J_{12} + J_{13}}, 
\end{aligned}
$$
where $\alpha_1 + \alpha_2 = 1- \alpha$, $1/2 = 1/p_1^m + 1/p_2^m$.

 Since $m+ \beta <l\le s$ for all $m=1, \ldots, l-1$ and $2\le l \le s$,  we can estimate 
\begin{equation*} 
\begin{aligned}
{\rm J_{11}}  \le \sum_{m=1}^{l-1}  \big \|  (-\Delta)^{\frac{1-\alpha}2}  D^{l-m} \uh_i^N  \big\|_{{2q^\prime_m}}\big\| D^m \nabla^\beta \uh^{N}_j  \big\|_{{2q_m}}
 \lesssim   \big\|  (-\Delta)^{\frac{\alpha}2} \uh_i^N  \big\|_{H^{s^\prime}}    \| \uh^{N}_j \|_{H^s}, 
 \end{aligned}
\end{equation*}
 where  $q_m \leq  d/(d-2(s-m - \beta))$  if $s-m - \beta < d/2$ and $1< q_m < \infty$ if   $s-m - \beta \ge d/2$ and $d/(2(s-m-\beta))\leq q^\prime_m \leq d/(d-2(s^\prime -s + m -1 + 2 \alpha))$. These conditions stipulate that we choose $s^{\prime}$ to satisfy $\max\{ d/2 - (2\alpha -1 - \beta), s - (m-1) - 2\alpha, s - \alpha\} < s^\prime < s$. 
$$
{\rm J_{12}}
 \le \sum_{m=1}^{l-1}  \big \| D^{l-m} \uh_i^N  \big\|_{{2p_m}}  \big\| (-\Delta)^{\frac{1-\alpha}2}  D^m \nabla^\beta \uh^{N}_j   \big\|_{{2p^\prime_m}}
  \lesssim    \big\| \uh_i^N \big\|_{H^s}  \big \|  (-\Delta)^{\frac{\alpha}2} \uh^{N}_j  \big\|_{H^{s^\prime}},  
$$
 where  $1<p_m \leq d/(d-2m)$ if $m< d/2$ and $1< p_m< \infty$ if $m\ge d/2$ and $d/(2m) \le p_m^\prime \le  d/ ( d- 2(s^\prime - m+ 2\alpha -\beta - 1))$. This places the following condition on $s^{\prime}$:
 $\max\{ m+1 + \beta - 2 \alpha, d/2 - (2\alpha -1 - \beta), s- \alpha\} < s^\prime < s$, where $1\leq m \leq s-1$. \\
$$
  {\rm J_{13}}   \lesssim \| \uh^N_i \|_{H^s} \big\|(- \Delta)^{\frac \alpha 2}  \uh^{N}_j \big\|_{H^{s^\prime}}, 
$$
where $2<p_1^m\leq 2 d/(d- 2(m-\alpha_1)) $  and $d/(m-\alpha_1)\leq p_2^m\leq 2d/(d - 2(s^\prime -m + 2\alpha - \beta -1 + \alpha_1))$, which is satisfied if $\max \{d/2 - (2\alpha -\beta -1), s + \beta +1 - 2\alpha - \alpha_1\} < s^{\prime}<s$.

To conclude, we remark that combining all of the above estimates on ${\rm J_1}$,  ${\rm J_2}$, and  ${\rm J_3}$ and  summing over $l=1, \ldots, s$ and $i=1, \ldots, n$ yields \eqref{estim_ho_1}.

\smallskip

\noindent \textbf{Step 5: Global existence of solutions for \eqref{syst:reg} with small initial data.} In this step we show that there exists $\theta = \theta(d, \sigma_i, a_{ij}, n )$ such that if \eqref{small_ic} holds, then we can iterate the argument in Step 2 to obtain a global solution of  \eqref{syst:reg}. 

Using that $s^\prime <s$,  from  \eqref{estim_ho_1} we obtain 
\begin{equation}
\label{May_12_1}
 \begin{aligned}
\frac{d}{dt} \| \uh^{N}\|^2_{H^s}  + \tilde{\sigma} \big\| (-\Delta)^{\frac \alpha 2} \uh^{N} \big\|^2_{H^s} \le C(d, a_{ij}, n)  \big\|(-\Delta)^{\frac \alpha 2}\hat u^N \big\|^{2}_{H^s}   \|\hat u^N\|_{H^s},   
 \end{aligned}
\end{equation}
for some $\tilde{\sigma} >0$. With \eqref{May_12_1} in-hand, we can apply \cite[Lemma 17]{CDJ_2018} with 
\begin{align*}
f(t) = \| \hat u^N(t, \cdot)\|_{H^s}, \quad g(t) = \big\| (- \Delta)^{\frac{\alpha}{2}} \hat u^N(t,\cdot)\big\|_{H^s}, \quad a = \tilde{\sigma}, \quad \textrm{ and  } b = C(d, a_{ij}, n).
\end{align*} The lemma yields that if $\| u^0\|_{H_s} \leq a/b$ then $(d/dt) \|\hat{u}^N \|_{H^s}^2 \leq 0$ and, hence, $\| \hat u^N(t, \cdot )\|_{H^s} \leq a/b$ for any $t \in [0,T]$. Therefore, setting $\theta = a/b$ allows us to iterate the local existence result of Step~2 to obtain a global solution.

In the same way, we remark that using the higher-order regularity estimates and considering $\hat u^N \in L^2(0,T; H^{s + \alpha}(\R^d))^n$,  the time of existence of the local solutions from Step 2 can be made independent of $N$. 

To address the uniqueness and positivity of the solution, we remark that these properties can be shown in the same way as in Theorem \ref{full_system}.
\end{proof}

\section{Proof of Theorem~\ref{full_system}}
\label{ref_6}

\begin{proof} 
 In the first  step we use the uniformity in $N$ of the \textit{a priori} estimates \eqref{estim_Hs_1}  to pass to the  limit as $N \rightarrow \infty$, which yields a solution of \eqref{sys.final}. In the second step we show the non-negativity of solutions of \eqref{sys.final}. In the third step we prove the uniqueness of weak solutions of \eqref{sys.final}. To finish, in the fourth step, we prove strong convergence  of a sequence of solutions of  \eqref{syst:reg} to the solution of \eqref{sys.final}.  
\smallskip

\noindent {\bf Step 1:  Existence of solutions of \eqref{sys.final}.}
Since \eqref{estim_Hs_1} is uniform in $N$, by compactness there exists  $u \in L^\infty(0,T; H^s(\R^d))^n \cap L^2(0,T; H^{s+\alpha}(\R^d))^n$ so that 
$$
\begin{aligned} 
 \uh^{N}  & \rightharpoonup^\ast  u  && \text{ in }  L^\infty(0,T; H^s(\R^d))^n, \\
 \uh^{N}  & \rightharpoonup  u   && \text{ in }  L^2(0,T; H^{s+\alpha}(\R^d))^n,
 \end{aligned} 
 $$
where the $\uh^{N}$ are the solutions of \eqref{syst:reg} provided by Theorem \ref{existence_regular}. Furthermore, by \eqref{estim_Hs_1} and the lower semicontinuity of the norms we have that
 \begin{equation}\label{estim_u_11} 
\| u \|_{L^\infty(0,T; H^s(\R^d))}   + \| u \|_{L^2(0,T; H^{s+\alpha}(\R^d))}   \lesssim 1.
\end{equation}

We must still pass to the limit $N\rightarrow \infty$ in the weak formulation \eqref{def_sol:sys.reg}. 
We first notice that  
\begin{equation}\label{limit_2}
\hat{W}_N  \ast \nabla^{\beta} \uh_j^N \rightharpoonup \nabla^{\beta} u_j \quad \text{ in }  L^2(0,T; L^2(\R^d)),
\end{equation}
which follows, e.g.~from \eqref{converg_convol}. Furthermore, using the equation \eqref{syst:reg}, we remark that 
\begin{equation}\label{time_deriv}
\| \partial_t \uh^N \|_{L^2(0,T;H^{\alpha}(\R^d)^{\prime})}  \lesssim 1,
\end{equation} 
where we have used \eqref{estim_Hs_1} and Morrey's inequality. 

Now, for any $R>0$, since the embedding of $H^{\alpha}(B_R)$ into $L^2(B_R)$ is compact and by~\eqref{time_deriv}, the Aubin-Lions lemma yields  $\uh_i^N \rightarrow u_i$ strongly in $L^2(0,T; L^2(B_R))$. To finish, we consider the weak formulation \eqref{def_sol:sys.reg} for a test function $\psi \in C^{\infty}_0(0,T; C^{\infty}_0 (\R^d))$. Using the observations made above, we are then able to pass to the limit in the nonlinear term of \eqref{def_sol:sys.reg}. Standard arguments ensure that the initial condition is satisfied.

\smallskip 

\noindent {\bf Step 2:  Positivity of solutions of \eqref{sys.final}.} Considering $u_i^{-} = \min\{ u_i, 0\}$  as a test function in the weak formulation of  \eqref{sys.final}, we then obtain 
\begin{equation}
\label{A_24_1}
 \frac d{dt} \| u_i^{-} \|_2^2 + 2 \sigma_i \big\langle  u_i^{-} , (-\Delta)^{\alpha} u_i  \big\rangle
\le C_\varsigma\sum_{j=1}^n  \big\|(-\Delta)^{\frac{1-\alpha}{2}}(u_i^{-}\nabla^{\beta} u_j )\big\|^2_2 + \varsigma \big\| (-\Delta)^{\frac \alpha 2} u_i^{-}  \big\|^2_2  
\end{equation}
for  $t \in (0,T]$ and $\varsigma>0$. To treat the second term on the left-hand side of \eqref{A_24_1} we use \cite[Lemma 5.2]{PGRV_2012}. In particular, we find that for any $t \in (0,T]$, the relation 
\begin{align*}
 \big\langle  u_i^{-} , (-\Delta)^{ \alpha } u_i  \big\rangle  \geq \int_{\R^d} |(-\Delta)^{\frac \alpha 2} u_i^{-}|^2 \dd x
\end{align*}
holds. Combining this observation with \eqref{A_24_1} and using \eqref{frac_Leib_gen}, we find that
\begin{align*}
\begin{split}
&  \frac d{dt} \| u_i^{-} \|_2^2  + \big\| (-\Delta)^{\frac{\alpha}{2}} u_i^- \big\|^2_2   \lesssim   \sum_{j=1}^n \big\|(-\Delta)^{\frac{\alpha_1}2}\nabla^\beta u_j \big\|^2_{p_1} \big\|(-\Delta)^{\frac{\alpha_2}2} u_i^{-} \big\|^2_{p_2} \\
 & + \sum_{j=1}^n \Big[ \|\nabla^\beta u_j\|^2_{L^\infty} \big\|(-\Delta)^{\frac{1-\alpha} 2} u_i^{-} \big\|^2_2 + 
\big\| (-\Delta)^{\frac{1-\alpha} 2} \nabla^\beta u_j \big\|^2_{L^\infty} \|u_i^{-}\|^2_2  \Big], 
\end{split}
 \end{align*}
where $1/ p_1 + 1/p_2 = 1/2$ and $\alpha_1 + \alpha_2 = 1-\alpha$. This we then combine with the observation  
\begin{align*}
 & \big\|(-\Delta)^{\frac{\alpha_1}2}\nabla^\beta u_j\big\|^2_{p_1} \big\|(-\Delta)^{\frac{\alpha_2}2} u_i^{-}\big\|^2_{p_2}  \lesssim
 \|u_j\|^2_{H^{s+ \beta}} \| u_i^{-}\|^2_{H^{1-\alpha}}, 
\end{align*}
where we require that $2<p_1\leq 2d/(d-2(s - \alpha_1))$ and $d/(s - \alpha_1) \leq p_2 \leq 2d/(d-2\alpha_1)$. We are able to satisfy these conditions since $s> d/2$.  Plugging-in this relation, using the embedding of $ H^{s} \hookrightarrow L^{\infty}$ for $s>d/2$, and additionally using the fractional Gagliardo-Nirenberg interpolation inequality, we obtain 
\begin{equation*}
 \frac{d}{dt} \|u^{-}\|^2_{2} 
 \lesssim  \Big( \|u\|^{\frac{2\beta}{2\alpha-1}}_{H^{s+ \alpha}} \|u\|^{\frac{2(\alpha - \beta)}{2\alpha-1}}_{H^{s}} + \|u\|^{2}_{H^{s+ \beta}}
+  \|u\|^{2\frac{2\alpha- 1-\beta}\alpha}_{H^{s}}\|u\|^{2\frac{1+\beta - \alpha}\alpha}_{H^{s+ \alpha}} \Big) 
 \|u^{-}\|^{2}_{2}. 
  \end{equation*}
From the above estimate and using the regularity of $u$ and non-negativity of initial data, we conclude that $u_i \ge 0 $ in $(0,T)\times \R^d$, for $i=1, \ldots, n$.

\smallskip

\noindent {\bf Step 3:  Uniqueness  of solutions of \eqref{sys.final}.} We assume that there are two solutions $u^1$ and $u^2$ of \eqref{sys.final}  and consider $w_i=u^1_i- u^2_i$ as a test function in the weak formulation of the equation for  $w_i$:
\begin{equation} 
\label{May_23_4}
\begin{aligned}
& \sum_{i=1}^n \frac{d}{dt}  \|w_i\|^2_2 +    \big\|(-\Delta)^{\frac \alpha 2} w_i\big\|^2_2  \lesssim 
\sum_{i, j=1}^n \big\|(-\Delta)^{\frac{1-\alpha} 2} \big(w_i \nabla^\beta u_j^1 +  u_i^2 \nabla^\beta w_j\big)\big\|^2_2 
\\ 
& \quad    \lesssim  \sum_{i, j=1}^n \| w_i\|^{2\theta}_{H^{\alpha}} \| w_i\|^{2(1-\theta)}_2   \|u_j^1\|^2_{H^{s+ \beta + 1- \alpha}} 
+ \|u_i^2\|^2_{H^{s+1 -\alpha}} \|w_j \|^{2\theta_1}_{H^{\alpha}}\|w_j \|^{2(1-\theta_1)}_{2}, 
  \end{aligned}
\end{equation} 
where $\theta = 1/\alpha -1$ and $\theta_1 = (1+\beta - \alpha)/\alpha$ are defined by applying Gagliardo-Nierenberg inequality. 
To obtain \eqref{May_23_4}, we have used \eqref{frac_Leib_gen} and 
\begin{equation*} 
\begin{aligned} 
 \big\| (- \Delta)^{\frac{\alpha_1}2} w_i \big\|^2_{p_1} \big\| (- \Delta)^{\frac{\alpha_2}2} \nabla^\beta  u_j^1 \big\|^2_{p_2}
   \lesssim   \| w_i\|^2_{H^{1-\alpha}} \|u_j^1\|^2_{H^{s+ \beta + 1- \alpha}}, \\
   \big \| (- \Delta)^{\frac{\alpha_1}2} u^2_i \big\|^2_{\hat{p}_1} \big \| (- \Delta)^{\frac{\alpha_2}2} \nabla^\beta  w_j \big\|^2_{\hat{p}_2}
   \lesssim   \| u^2_i\|^2_{H^{s+1-\alpha}} \|w_j\|^2_{H^{\beta + 1- \alpha}},  
 \end{aligned}   
\end{equation*} 
where  $2<p_1\leq 2d/(d - 2\alpha_2)$, $2<\hat{p}_2\leq 2d/(d - 2\alpha_1)$, $s\geq d/2 - (1-\alpha)$, and $\alpha_1 + \alpha_2 = 1 - \alpha$. 
Integrating \eqref{May_23_4} in time and applying Young's inequality gives
\begin{equation*} 
\begin{aligned}
  &\|w(\tau)\|^2_2  +  \int_0^\tau \big\|(-\Delta)^{\frac \alpha 2} w (t) \big \|^2_2 \,  \dd t  \lesssim  \int_0^\tau  \|w (t) \|^{2}_2 \Big(\|u^2(t)\|^{\frac{2}{1-\theta_1}}_{H^{s+1 -\alpha}} + 
 \|u^2(t)\|^{2}_{H^{s+1 -\alpha}}  
 \\ & \hspace{6cm} +  \|u^1(t)\|^{\frac 2{1-\theta}}_{H^{s+ \beta + 1- \alpha}}+  \|u^1(t)\|^2_{H^{s+ \beta + 1- \alpha}}\Big)  \dd t, 
   \end{aligned}
\end{equation*} 
for any $\tau \in (0,T]$. An application of the Gr\"onwall inequality implies that $w_i = u_i^1 - u_i^2 = 0$ a.e.\ in $(0,T)\times \R^d$, and hence uniqueness of a solution of    \eqref{sys.final}.
\smallskip 

\noindent {\bf Step 4:  Strong convergence   of   a sequence of solutions of  \eqref{syst:reg}  to solution of \eqref{sys.final}. }
Finally, we prove the strong convergence of $\hat u^N$ to $u$. We consider  the equation for $\hat u_i^N- u_i$ in the weak form, and use as a test function $\hat u_i^N-u_i$ to obtain
\begin{equation*} 
\begin{aligned}
& \frac d{dt} \|u_i-\hat u_i^N\|^2_2  +   \big\|(-\Delta)^{\frac \alpha 2} (u_i-\hat u_i^N) \big\|^2_2
\lesssim \sum_{j=1}^n  \Big[\big\|(-\Delta)^{\frac{1-\alpha}{2}}\big[(u_i-\hat u_i^N)\na^{\beta}\hat u_j^N \ast \hat W_N \big]\big\|^2_{2}\\
&+ 
 \big\| (-\Delta)^{\frac{1-\alpha}{2}}\big(u_i\, \na^{\beta} (\hat u_j^N  -  u_j)\big)\big\|^2_{2} + \big\| (-\Delta)^{\frac{1-\alpha}{2}}\big(u_i[ \na^{\beta}(\hat u_j^N \ast \hat W_N)-\na^{\beta} \hat u^N_j]\big)\big\|^2_{2}
  \Big]  =: {\rm J_1 + J_2 + J_3}.  
 \end{aligned}
\end{equation*}
 We  estimate the terms on the right hand-side using \eqref{frac_Leib_gen} and for the first term obtain 
\begin{equation*} 
\begin{aligned}
{\rm J_1} &\lesssim   \sum_{j=1}^n \Big[ \big\|\nabla^\beta \hat u_j^N \big\|^2_{L^\infty} \big\|(-\Delta)^{\frac{1-\alpha} 2} (u_i - \hat u_i^N)\big\|^2_2  + 
\big\| (-\Delta)^{\frac{1-\alpha} 2} \nabla^\beta \hat u_j^N \big\|^2_{L^\infty} \|u_i - \hat u_i^N\|^2_2  \Big] 
\\
& \quad +  \sum_{j=1}^n \big\|(-\Delta)^{\frac{\alpha_1}2}\nabla^\beta\hat  u^N_j \big\|^2_{p_1}  \big\|(-\Delta)^{\frac{\alpha_2}2} (u_i -\hat u_i^N) \big\|^2_{p_2}  
\\ 
& \lesssim  \sum_{j=1}^n \|\hat u^N_j\|^2_{H^{s+ \beta}}  \|u_i - \hat u^N_i\|^{2\theta}_{H^{\alpha}} \|u_i - \hat u_i^N\|^{2(1-\theta)}_2 + 
\| \hat u^N_j\|^{2\theta_1}_{H^{s+ \alpha}}\|\hat  u_j^N\|^{2(1-\theta_1)}_{H^{s}}   \|u_i - \hat u_i^N\|^2_2
\\
& \qquad \qquad  +  \|\hat u_j^N\|^2_{H^{s+ \beta}} \Big[ \| u_i - \hat u_i^N\|^{2}_2  + \big\| (-\Delta)^{\frac{\alpha} 2} (u_i- \hat u_i^N)\big\|^{2\theta}_2 \| u_i - \hat u_i^N\|^{2(1-\theta)}_2   \Big], 
 \end{aligned}
\end{equation*}
where $\theta = 1/\alpha - 1$, $\theta_1 = (1+\beta - \alpha)/\alpha$  and  $\alpha_1 + \alpha_2 = 1-\alpha$.
For the second term in the similar  way we find that 
\begin{equation*} 
\begin{aligned}
& {\rm J_2}  \lesssim  
\sum_{j=1}^n \Big[  \| u_i\|^2_{H^{s+ 1-\alpha}}  \|u_j - \hat u^N_j\|^{2\theta_2}_{H^{\alpha}} \|u_j - \hat u_j^N\|^{2(1-\theta_2)}_2  
\\ &  \quad + 
\|  u_i\|^{2}_{H^{s}}\| u_j -  \hat u_j^N\|^{2\theta_1}_{H^{ \alpha}}   \|u_j - \hat u_j^N\|^{2(1-\theta_1)}_{2}
 +  \| u_i\|^2_{H^{s}} \| u_j- \hat u_j^N\|^{2\theta_1}_{H^\alpha} \| u_j  - \hat u_j^N\|^{2(1-\theta_1)}_{2} \Big], 
 \end{aligned}
\end{equation*}
where $\theta_1 = (1+\beta - \alpha)/\alpha$ and $\theta_2 = \beta/\alpha < 1$. 
For the third term  we have 
\begin{equation*} 
\begin{aligned}
{\rm J_3}  & \lesssim
   \| u_i\|^2_{H^{s+1-\alpha}}  \big \| \na^{\beta} \hat u_j^N \ast \hat W_N - \na^{\beta} \hat u^N_j \big\|_{2}^2  + \| u_i\|^2_{H^{s}}   \big\| \na^{\beta} \hat u_j^N \ast \hat W_N - \na^{\beta} \hat u^N_j \big\|_{H^{1-\alpha}}^2\\
 & \quad  + \| u_i\|^2_{H^{s}}   \big\| (-\Delta)^{1-\alpha} (\na^{\beta} \hat u_j^N \ast \hat W_N -  \na^{\beta} \hat u^N_j) \big\|_{2}^2 
 \lesssim  \hat \kappa_N^{-2} \| u_i\|^2_{H^{s+1-\alpha}}\| \hat u_j^N\|^2_{H^{\beta + 2 - \alpha }}  , 
 \end{aligned}
\end{equation*}
 see \cite{O_89} or estimate \eqref{converg_convol} of Lemma~\ref{conv_2}. 
Then applying Young's inequality yields
\begin{equation*} 
\begin{aligned}
 \sum_{i=1}^n \frac{d}{dt} \|u_i - \hat u_i^N\|^2_2  + \sum_{i=1}^n \|(-\Delta)^{\frac \alpha 2} (u_i - \hat u_i^N)\|^2_2
 \lesssim  \sum_{i,j=1}^n \Big[ \Big(1+ \|\hat u^N_j\|^{\frac{2\beta}{2\alpha-1}}_{H^{s+ \alpha}} \|\hat u^N_j\|^{\frac{2(\alpha - \beta)}{2\alpha-1}}_{H^{s}} 
\\  \quad  +  \|\hat u^N_j\|^{\frac{2(2\alpha- 1-\beta)}{\alpha}}_{H^{s}}\|\hat u^N_j\|^{\frac{2(1+\beta - \alpha) }{\alpha}}_{H^{s+ \alpha}}  
 +  \|u_j\|^{\frac{2\alpha}{2\alpha-1-\beta}}_{H^{s}} +   \|u_j\|^{\frac{2(1-\alpha)}{\alpha-\beta}}_{H^{s+ \alpha}} \|u_j\|^{\frac{2(2\alpha-1)}{\alpha-\beta}}_{H^{s}}   \Big) 
 \|u_i - \hat u_i^N\|^{2}_{2}  \\
\quad   + \hat \kappa_N^{-2} \| u_i\|^2_{H^{s+1-\alpha}}\| \hat u_j^N\|^2_{H^{\beta + 2 - \alpha}} \Big] .
  \end{aligned}
\end{equation*}
Using the regularity of $u$ and $\hat u_N$, the definition of $\hat{\kappa}_N$,  and applying the Gr\"onwall inequality, we obtain the convergence  result in \eqref{converg_hatu_u}. 
 \end{proof} 
 
  \section*{Acknowledgements}
  ESD and CR gratefully acknowledge partial support from the Austrian Science Fund (FWF), grants P30000, W1245, and F65.
 

\section*{Appendix}
We now summarize some facts about fractional Sobolev spaces and the fractional Laplacian that we use throughout the paper. For a more complete picture  see \cite{NPV_2012} and \cite{S_2019}. 

\begin{definition}[Fractional Sobolev norm $H^{\alpha}(\R^d)$] \label{seminorm} Let $\alpha \in (0,1)$. We define the fractional $H^{\alpha}$-seminorm as  
$$
[\psi]^2_{H^{\alpha}} := \int_{\R^d} \int_{\R^d}   \frac{|\psi(x) - \psi(y)|^2}{|x-y|^{d+2\alpha}} \dd x \dd y
$$
and remark that the $H^{\alpha}$-norm is then given by 
$
\|\psi\|^2_{H^{\alpha}} := \|\psi\|_{2}^2 + [\psi]^2_{H^{\alpha}}$.
\end{definition}
\noindent  The other fractional Sobolev spaces are defined analogously, see e.g.~\cite[Section 2]{NPV_2012}. Throughout the article the following equivalences are used:
\begin{align}
\label{equivalence}
\| \nabla (-\Delta)^{\frac{ \alpha - 1} 2} \psi \|_{2} ^2   \sim \|(-\Delta)^{\frac \alpha 2} \psi \|^2_{2}, \qquad \| (-\Delta)^{\frac{\alpha}{2}} \psi \|_{2} \sim [\psi]_{H^{\alpha}}
\end{align}
and  can be found in \cite[Prop.~3.6]{NPV_2012}. These are simple consequences of the Fourier analytic definition of the fractional Laplacian. 

 For $f\in H^1(\R^d)$ and $g \in H^{1-\alpha}(\R^d)$, with $\alpha \in (0,1)$,  it holds that    
\begin{align}
\label{integration_by_parts}
\langle \na f ,  g \rangle = \langle \nabla (- \Delta )^{(1 - \alpha)/2} (- \Delta )^{(\alpha -1)/2}  f , g  \rangle  = \langle \nabla  (- \Delta )^{(\alpha -1)/2}  f , (- \Delta )^{(1 - \alpha)/2} g  \rangle.
\end{align}
Furthermore, for the fractional Laplacian the classical product rule may be replaced by the following commutator estimate: 
\begin{align}
\label{frac_Leib_gen}
\big\| (-\Delta)^{\frac \alpha 2} (fg) -  (g (-\Delta)^{\frac \alpha 2} f  + f (-\Delta)^{\frac \alpha 2} g)  \big\|_{p} \lesssim \big\| (-\Delta)^{\frac{\alpha_1}2} f \big\|_{p_1} \big\| (-\Delta)^{\frac{\alpha_2}2} g \big\|_{p_2}  ,
\end{align}
where $1/p = 1/{p_1} + 1/{p_2}$ with $p_1, p_2 \in (1, \infty)$ and $\alpha = \alpha_1 + \alpha_2$ with $\alpha_1, \alpha_2 >0$, see \cite{KPV_93}. We often make use of \eqref{frac_Leib_gen} in the form
\begin{align}
\label{frac_Leib}
\begin{split}
\big\| (-\Delta)^{\frac{\alpha}{2}} (fg) \big\|_{2} \lesssim  \| g \|_{H^{\alpha +s}}  \|f\|_2 + \|g\|_{H^{\alpha + s^{\prime}}} \|  f \|_{H^\alpha} \, \,  \textrm{for} \,\,  s \geq \frac{d}{2} \, \, \textrm{ and } \, \, s^{\prime}\geq \frac{d}{2} - \alpha.
\end{split}
\end{align}
We remark that \eqref{frac_Leib} is a simple consequence of \eqref{equivalence}, \eqref{frac_Leib_gen}, and the Sobolev embedding for fractional Sobolev spaces, which can be found in \cite[Theorem 6.5]{NPV_2012}.

We will also make use of the estimate 
\begin{align}\label{frac_Leib_2}
\big\| (-\Delta)^{\frac{\alpha}{2}} (fg)\big\|_{2}
 \lesssim  \big(  \|g\|_{\infty} +   \|\nabla g\|_{\infty } \big) \| f \|_{H^\alpha},
\end{align}
which holds for  $g \in W^{1,\infty}(\R^d)$, $f \in H^{\alpha}(\R^d)$.  To show \eqref{frac_Leib_2}
we  use  \eqref{frac.L}  and  obtain 
$$
 \int\limits_{\R^d} \big|(-\Delta)^{\frac{\alpha}{2}} (fg)(x)\big|^2 \dd x   \leq  \big\|  g(-\Delta)^{\frac{\alpha}{2}}  f \big \|_2^2 
 + \int\limits_{\R^d} \Big| \textrm{P.V.} \int\limits_{\R^d} \frac{g(x) - g(y)}{|x-y|^{d+\alpha}} f(y) \dd y \Big|^2\dd x
  = {\rm J_1 + J_2}, 
$$
 where we  bound
$
{\rm J_1}  \lesssim  \|g\|^2_{L^\infty} \|(-\Delta)^{\frac{\alpha}{2}}  f \|^2_2
$
and decompose $\rm{J}_2$ as
\begin{align*}
&{\rm J_{21} + J_{22}}:= \int\limits_{\R^d}\Big|  \textrm{P.V.} \hspace{-0.3 cm } \int\limits_{|x-y|<1}  \hspace{-0.2 cm }   \frac{g(x) - g(y)}{|x-y|^{d+\alpha}} f(y) \, \dd y \Big|^2\dd x+ 
 \int\limits_{\R^d}\Big | \int\limits_{|x-y|\geq 1}  \hspace{-0.2 cm }   \frac{g(x) - g(y)}{|x-y|^{d+\alpha}} f(y) \, \dd y \Big|^2\dd x.
\end{align*}
Considering the following $L^1$-functions
   $$
 h_1(x) =\begin{cases}
\dfrac 1{ |x|^{d+\alpha-1} } & \text{ for } |x|< 1, \\
 0 & \text{ otherwise},  
 \end{cases}  \quad \text{ and  } \quad 
  h_2(x) =\begin{cases}
\dfrac 1 { |x|^{d+\alpha} }  & \text{ for } |x|\geq 1, \\
 0 & \text{ otherwise}, 
 \end{cases} 
 $$   
 we obtain 
 $$ 
\begin{aligned}
J_{21} & \lesssim   \| \nabla g \|_{L^\infty}^2   
\int_{\R^d} \Big( \int_{|x-y|< 1}   \frac{|f(y)|}  { |x-y |^{d + \alpha -1}}  \dd y\Big)^2 \dd x  
 \lesssim   \| \nabla g \|_{L^\infty}^2   \| f \|^2_{2} , \\
 J_{22} & \lesssim   \|  g \|_{L^\infty}^2 
\int_{\R^d} \Big( \int_{|x-y|\geq 1}   \frac{|f(y)|}  { |x-y |^{d + \alpha}}    \dd y\Big)^2 \dd x
\lesssim   \|  g \|_{L^\infty}^2  \| f \|^2_{2}.  
\end{aligned}
$$      
The Gagliardo-Nirenberg  inequality for fractional Sobolev spaces (see e.g.~\cite{Brezis_2018})
reads  
\begin{align}
\label{GN_form}
\| f \|_{W^{s, p}(\R^d)} \lesssim \|f \||_{W^{s_1, p_1}(\R^d)}^\theta  \| f \||_{W^{s_2, p_2}(\R^d)}^{1-\theta}
\end{align}
for $s= \theta s_1 + (1- \theta) s_2$ and  $1/p = \theta/p_1 + (1- \theta)/p_2$, where $0\leq s_1, s_2$, $1\leq p_1, p_2 \leq \infty$, and $\theta \in (0,1)$.

To finish, we remark that for the inverse fractional Laplace operator we have 
$$
(-\Delta)^{-\kappa} f(x) = \frac 1 {c_{d, \kappa}} \int_{\R^d} \frac{ f(y)}{|x-y|^{d-2\kappa}} \dd y =  \mathcal I_{2\kappa} \ast f  (x), \; \;  \; \; \; 
 \mathcal I_{2\kappa}(x) = \frac 1 {c_{d, \kappa}}   |x|^{-(d-2\kappa)}, 
$$
for $d> 2\kappa>0$,  and  for $p< d/(2\kappa)$, see e.g.~\cite[Chapter 5, Theorem 1]{Stein}, 
\begin{equation}\label{Riesz_est} 
\|  (-\Delta)^{-\kappa} f \|_{L^{dp/(d-2\kappa p)}(\R^d)} \lesssim \| f\|_{L^p(\R^d)}.
\end{equation}

We now show that the limit $N\to \infty$ of the stopping time $t^N$ defined in \eqref{stopping_time} is positive a.s.~in $\Omega$. Towards a contradiction let us assume that $t^N \to 0$ with a positive probability, i.e.~$\mathbb P(\omega\in \Omega: \; t_N(\omega) \to 0 ) \geq \varepsilon_0>0$. For $\omega \in \Omega$ s.t.~$t^N(\omega) \rightarrow 0$, for any $\tilde \kappa>0$ there exists $N_0(\omega)$ such that $t^{N} < \tilde \kappa$ for all $N \geq N_0$. We remark that by Egoroff's theorem the $N_0$ can be chosen uniformly in $\omega$ on a set of measure $\varepsilon_0/2$. Letting $ \tilde \kappa < T$, we thereby obtain that 
$$
\mathbb P\Big(\|h^{N} - \hat u^{N}\|^2_{[0,t^{N}]} \geq \delta_{N} \Big) \geq \frac{\varepsilon_0}{2}, \quad \text{for } N \geq N_0 \text{ and } N_0 \gg 1. 
$$
On the other hand, \eqref{initial_condition_assumption_1} and \eqref{initial_condition_assumption_2} ensure that for any $\varepsilon>0$ there exists $\tilde N$ such that
$$
\mathbb P \big(\|h^N(0) - \hat u^N(0)\|^2_2\geq \delta_N^{1+\rho} \big) < \varepsilon, \quad \text{for } N\geq \tilde N.
$$
From the right-continuity of $\|h^N - \hat u^N\|^2_{[0,\tau]}$  and since $\|h^N - \hat u^N\|^2_{[0,\tau_m]}$ is monotone non-increasing as $\tau_m\searrow 0$,  we obtain that  there exists $\tau_{M}>0$ such that 
$$
\mathbb P \Big( \|h^{N} - \hat u^{N}\|^2_{[0,\tau]}\geq \delta_{N} \Big) < 2\varepsilon, \quad \text{for } \tau \leq \tau_{M} \text{ and } N \geq \tilde N.
$$ 
Then taking $\varepsilon < \varepsilon_0/6$, $\tilde \kappa = \tau_M$, and $N_0\geq\tilde N$, we obtain a contradiction
\begin{align*}
\mathbb P \Big(\|h^{N} - \hat u^{N}\|_{[0,t^{N}]}^2\geq \delta_{N} \Big) & \leq \mathbb P \Big(\|h^{N} - \hat u^{N}\|_{[0,\tau_M]}^2\geq \delta_{N} \Big) < 2\varepsilon, \quad \text{for } N\geq N_0.
\end{align*}

\bibliographystyle{plain} 
\bibliography{M_E.bib}

\end{document}